\numberwithin{equation}{section}
\renewcommand{\subset}{\subseteq}
\newcommand{\B}{\mathcal{B}}
\newcommand{\C}{\mathcal{C}}
\newcommand{\D}{\mathcal{D}}
\newcommand{\F}{\mathcal{F}}
\newcommand{\M}{\mathcal{M}}
\renewcommand{\P}{\mathcal{P}}
\newcommand{\CC}{\mathbb{C}}
\newcommand{\GG}{\mathbb{G}}
\newcommand{\QQ}{\mathbb{Q}}
\newcommand{\RR}{\mathbb{R}}
\newcommand{\metr}{\mathscr{M}_\lambda}
\newcommand{\metrhat}{\widehat{\mathscr{M}}_\lambda}
\newcommand{\tr}{\mathcal{T}}
\newcommand{\pre}[2]{\tensor[^{#1}]{#2}{}}
\newcommand{\Nbhd}{\boldsymbol{N}}
\newcommand{\Bor}{\l^+\text{-}\mathbf{Bor}}
\newcommand{\markdef}[1]{\textbf{#1}}
\newcommand{\Sii}[2]{\l^+\text{-}\boldsymbol{\Sigma}^{ #1 }_{ #2}}
\newcommand{\Pii}[2]{\l^+\text{-}\boldsymbol{\Pi}^{ #1 }_{ #2 }}
\newcommand{\Dee}[2]{\l^+\text{-}\boldsymbol{\Delta}^{ #1 }_{ #2}}
\newcommand{\Dlim}{D\text{-}\!\lim}
\newcommand{\DDlim}{\mathcal{D}\text{-}\!\lim}
\newcommand{\llim}[1]{#1\text{-}\!\lim}
\DeclareMathOperator{\ulim}{ulim}
\DeclareMathOperator{\supp}{supp}
\newcommand{\ullim}[1]{#1\text{-}\!\ulim}
\newcommand{\llimhatdef}[2]{#1\text{-}\widehat{\lim}{}^{#2}}
\newcommand{\Fin}{\mathrm{Fin}}
\DeclareMathOperator{\lev}{lev}
\renewcommand{\a}{\alpha}
\renewcommand{\b}{\beta}
\newcommand{\g}{\gamma}
\renewcommand{\k}{\kappa}
\renewcommand{\l}{\lambda}
\renewcommand{\phi}{\varphi}
\renewcommand{\o}{\omega}
\newcommand{\cof}{\operatorname{cof}}
\newcommand{\diam}{\operatorname{diam}}
\newcommand{\pow}{\mathscr{P}}
\newcommand{\leng}{\operatorname{lh}}
\DeclareMathOperator{\conc}{\mathbin{ {}^\smallfrown {} }}
\newcommand{\ZFC}{{\sf ZFC}}
\DeclareMathOperator{\cl}{cl}
\DeclareMathOperator{\ran}{ran}
\DeclareMathOperator{\dom}{dom}
\newtheorem{theorem}{Theorem}[section]
\newtheorem{lemma}[theorem]{Lemma}
\newtheorem{corollary}[theorem]{Corollary}
\newtheorem{proposition}[theorem]{Proposition}
\theoremstyle{definition}
\newtheorem{claim}{Claim}[theorem]
\newtheorem*{claim*}{Claim}
\newtheorem{definition}[theorem]{Definition}
\newtheorem{question}[theorem]{Question}
\newtheorem*{question*}{Question}
\theoremstyle{remark}
\newtheorem{remark}[theorem]{Remark}
\newenvironment{enumerate-(a)}{\begin{enumerate}[label={\upshape (\alph*)}, leftmargin=2pc]}{\end{enumerate}}
\newenvironment{enumerate-(a)-r}{\begin{enumerate}[label={\upshape (\alph*)}, leftmargin=2pc,resume]}{\end{enumerate}}
\newenvironment{enumerate-(A)}{\begin{enumerate}[label={\upshape (\Alph*)}, leftmargin=2pc]}{\end{enumerate}}
\newenvironment{enumerate-(A)-r}{\begin{enumerate}[label={\upshape (\Alph*)}, leftmargin=2pc,resume]}{\end{enumerate}}
\newenvironment{enumerate-(i)}{\begin{enumerate}[label={\upshape (\roman*)}, leftmargin=2pc]}{\end{enumerate}}
\newenvironment{enumerate-(i)-r}{\begin{enumerate}[label={\upshape (\roman*)}, leftmargin=2pc,resume]}{\end{enumerate}}
\newenvironment{enumerate-(I)}{\begin{enumerate}[label={\upshape (\Roman*)}, leftmargin=2pc]}{\end{enumerate}}
\newenvironment{enumerate-(I)-r}{\begin{enumerate}[label={\upshape (\Roman*)}, leftmargin=2pc,resume]}{\end{enumerate}}
\newenvironment{enumerate-(1)}{\begin{enumerate}[label={\upshape (\arabic*)}, leftmargin=2pc]}{\end{enumerate}}
\newenvironment{enumerate-(1)-r}{\begin{enumerate}[label={\upshape (\arabic*)}, leftmargin=2pc,resume]}{\end{enumerate}}
\newenvironment{itemizenew}{\begin{itemize}[leftmargin=2pc]}{\end{itemize}}
\begin{document}

\title{Generalized Baire Class functions}
\date{\today}

\author{Luca Motto Ros}
\address[Luca Motto Ros]
{Universit\`a degli Studi di Torino,
Dipartimento di Matematica ``G. Peano'',
Via Carlo Alberto 10, 10123 Torino, Italy}
\email[Luca Motto Ros]{luca.mottoros@unito.it}

\author{Beatrice Pitton}
\address[Beatrice Pitton]{
Université de Lausanne,
Quartier UNIL-Chamberonne,
Bâtiment Anthropole,
1015 Lausanne, Switzerland and Universit\`a degli Studi di Torino,
Dipartimento di Matematica ``G. Peano'',
Via Carlo Alberto 10, 10123 Torino, Italy}
\email[Beatrice Pitton]{beatrice.pitton@unil.ch}

\subjclass[2010]{Primary 03E15; Secondary 54E99}
\thanks{Research partially supported by the project PRIN 2022 ``Models, sets and classifications'', prot.\ 2022TECZJA. The authors are members of GNSAGA (INdAM)}

\begin{abstract}
Let \( \l \) be an uncountable cardinal such that \( 2^{< \l} = \l \).
Working in the setup of generalized descriptive set theory, we study the structure of \( \l^+ \)-Borel measurable functions with respect to various kinds of limits, and isolate a suitable notion of \( \l \)-Baire class \( \xi \) function. Among other results, we provide higher analogues of two classical theorems of Lebesgue, Hausdorff, and Banach, namely:
\begin{enumerate-(1)}
\item 
A function is \( \l^+ \)-Borel measurable if and only if it can be obtained from continuous functions by iteratively applying pointwise \( D \)-limits, where \( D \) varies among directed sets of size at most \( \l \).
\item 
A function is of \( \l \)-Baire class \( \xi \) if and only if it is \( \Sii{0}{\xi+1} \)-measurable.
\end{enumerate-(1)}
\end{abstract}

\maketitle

\setcounter{tocdepth}{1}
\tableofcontents

\section{Introduction}
Roughly speaking, generalized descriptive set theory is the higher analogue of classical descriptive set theory obtained by replacing all occurrences of the first infinite cardinal \( \omega \) with an uncountable cardinal \( \l \) or its cofinality \( \mu = \cof(\l) \). For example, the generalized Cantor space \( \pre{\l}{2} \) is obtained by endowing the set of all binary sequences of length \( \l \) with the so-called bounded topology, while the generalized Baire space \( \pre{\mu}{\l}\) consists of all \( \mu \)-sequences with values in \( \l \), again equipped with the bounded topology (see Section~\ref{subsec:treesandspacesofsequences}). Because of some striking applications and tight connections with other well-established areas of mathematical logic, such as Shelah's stability in model theory~\cite{FHK14,HM17,HKM17,MMR21,M22,M2023}, generalized descriptive set theory has gained a certain relevance in modern set theory, and the quest for a solid foundation, paving the way to more applications, became an important issue.

Nowadays the literature features a thorough study of the classes of Polish-like spaces that allow a meaningful development of the theory~\cite{MR13, AMRS23, AST18, Gal, DW, phdago, DMR}, as well as a deep analysis of their definable subsets~\cite{FHK14, HK18,  LS15, LMRS16, DMR, AMR22, ACMRP, Ana}. 
The goal of this paper is instead to study definable functions between such spaces, focusing in particular on \( \l^+ \)-Borel measurable functions and their stratifications.%
\footnote{After completing this work, we were informed that the same kind of problems (but restricted to regular cardinals) were tackled in~\cite{phdNob} using completely different methods.
Unfortunately, the proof of~\cite[Theorem 4.12]{phdNob}, which is the main result in this direction from that source, is flawed, and the definition of \( \l \)-Baire class \( \xi \) functions given there cannot work as expected. We will come back to this issue in Section~\ref{sec:questions}.} 

In classical descriptive set theory, two of the most fundamental results concerning Borel functions between separable metrizable spaces, due to Lebesgue, Hausdorff, and Banach, are the following ones.

\begin{theorem}[{See e.g.\ \cite[Theorem 11.6, or Theorems 24.3 and 24.10]{Kech}}] \label{thm:closureunderlimits-classical}
Let \( X \) and \( Y \) be separable metrizable spaces, and further assume that either \( X \) is zero-dimensional or
\( Y = \RR \).
Then the class of Borel functions from \( X \) to \( Y \) coincides with the closure under pointwise limits of the class of continuous functions.
\end{theorem}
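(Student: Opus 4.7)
The plan is to follow the classical Lebesgue-Hausdorff-Banach strategy: an easy direction plus a transfinite induction on the Borel hierarchy for the harder one.

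For the easy inclusion, let $\B^*$ denote the closure under pointwise limits of the continuous functions $X \to Y$. If $f_n \to f$ pointwise and each $f_n$ is Borel, then for any open $U \subseteq Y$ with compatible metric $d$, setting $U_k = \{ y \in U : d(y, Y \setminus U) > 1/k \}$ gives $f^{-1}(U) = \bigcup_k \liminf_n f_n^{-1}(U_k)$, which is Borel. Hence $\B^*$ is contained in the class of Borel functions.

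For the reverse inclusion, define Baire class $0$ to be the continuous functions and, for $\xi \geq 1$, Baire class $\xi$ to be the class of pointwise limits of sequences drawn from Baire classes $< \xi$; their union over $\xi < \omega_1$ is exactly $\B^*$. I would prove by transfinite induction on $\xi < \omega_1$ that every $\boldsymbol{\Sigma}^0_{\xi+1}$-measurable function $X \to Y$ is of Baire class $\xi$. The inductive step, for both successor and limit $\xi$, is a reorganization: a $\boldsymbol{\Sigma}^0_{\xi+1}$-measurable $f$ can be pointwise-approximated by $\boldsymbol{\Sigma}^0_\eta$-measurable functions for suitable $\eta < \xi+1$, by writing preimages of a countable basis of $Y$ as countable unions of lower-level sets and then diagonalizing.

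The main obstacle is the base case $\xi = 1$: every $\boldsymbol{\Sigma}^0_2$-measurable function (one with $F_\sigma$ preimages of opens) must be realized as a pointwise limit of continuous functions, and this is precisely where the dichotomous hypothesis enters, as the equivalence fails for arbitrary separable metrizable $X$ and $Y$. In the zero-dimensional case, for each $n$ I would cover $Y$ by countably many open sets $V^n_k$ of diameter $< 1/n$, pull back to obtain an $F_\sigma$ cover $\{f^{-1}(V^n_k)\}_k$ of $X$, exploit zero-dimensionality to produce a refining countable clopen partition $\{E^n_k\}_k$ with each $E^n_k$ contained in some $f^{-1}(V^n_{h(n,k)})$, and define $f_n$ to take a fixed value in $V^n_{h(n,k)}$ on $E^n_k$, obtaining a locally constant (hence continuous) $f_n$ with $f_n \to f$ pointwise. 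In the case $Y = \RR$, I would partition $\RR$ into intervals of length $1/n$, pull back to a family of $F_\sigma$ sets $A^n_k$, and use Urysohn-type partitions of unity on $X$ subordinate to nested closed shrinkings of the $A^n_k$ to interpolate, yielding continuous $f_n \to f$.
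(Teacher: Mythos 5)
Your easy direction is fine, and the overall architecture (induction on the Borel rank, with the dichotomous hypothesis entering only at the base case) is the standard Lebesgue--Hausdorff--Banach strategy; note that the paper itself does not prove this theorem but cites it from Kechris, so I am judging your argument on its own terms and against the paper's generalized machinery, whose \( \lambda = \omega \) instance is essentially the correct classical proof.

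There is, however, a genuine error in your base case for zero-dimensional \( X \). You claim that the countable \( F_\sigma \) cover \( \{ f^{-1}(V^n_k) \}_k \) of \( X \) can be refined by a countable \emph{clopen} partition \( \{ E^n_k \}_k \). Zero-dimensionality gives clopen refinements of \emph{open} covers, but the sets \( f^{-1}(V^n_k) \) are only \( F_\sigma \) when \( f \) is \( \boldsymbol{\Sigma}^0_2 \)-measurable, and an \( F_\sigma \) cover need not admit any clopen refinement. Worse, if your construction worked it would prove too much: since \( x \in E^n_k \) forces both \( f(x) \) and \( f_n(x) \) into the same set \( V^n_{h(n,k)} \) of diameter \( < 1/n \), you would get \( d(f_n(x), f(x)) < 1/n \) for \emph{all} \( x \), i.e.\ \( f \) would be a \emph{uniform} limit of continuous functions and hence itself continuous. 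Taking \( X = \pre{\o}{2} \) and \( f \) the characteristic function of a \( \boldsymbol{\Delta}^0_2 \) non-clopen set already refutes this. The correct argument must settle for a countable \( \boldsymbol{\Delta}^0_2 \)-partition refining the cover (obtained from the generalized reduction property of \( \boldsymbol{\Sigma}^0_2 \)), then approximate each piece from inside by an increasing sequence of closed sets, separate the finitely many disjoint closed sets appearing at stage \( n \) by a genuine clopen partition, and only then pass to the limit. This is intrinsically a double approximation (over the index of the basis \emph{and} over the closed approximations) that happens to collapse to a single sequential limit when everything is countable --- exactly the phenomenon the paper isolates in Proposition~\ref{prop:finlimit}, Lemma~\ref{lem:locallyconstant}, and Proposition~\ref{full_baire}, and the reason \( \Fin_\l \)-limits become unavoidable for \( \l > \o \).

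Two smaller points. Your inductive step (``writing preimages of a countable basis as countable unions of lower-level sets and then diagonalizing'') is too compressed to check: the standard route goes through finite-range approximations that are locally constant on finite \( \boldsymbol{\Delta}^0_{\xi} \)-partitions, plus closure of Baire class \( \xi \) under uniform limits (for \( Y = \RR \)) or a separation argument (for zero-dimensional \( X \)); a bare diagonalization over a doubly-indexed family of limits does not by itself produce a single pointwise limit. Your \( Y = \RR \) base case is in the right spirit (Urysohn functions subordinate to closed shrinkings), but as written it also promises continuous \( f_n \to f \) in one step, whereas the classical proof first produces Baire class \( 1 \) approximants that converge uniformly and then invokes closure of real-valued Baire class \( 1 \) functions under uniform limits.
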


This can be refined by considering the Baire hierarchy, which is recursively defined by stipulating that a function \( f \colon X \to Y \) is of Baire class \( 0 \) if it is continuous, while it is of Baire class \( \xi > 0 \) if it can be written as a pointwise limit of functions of lower Baire classes.

\begin{theorem}[{See e.g.\ \cite[Theorems 24.3 and 24.10]{Kech}}] \label{thm:Bairestratification-classical}
Let \( X \) and \( Y \) be separable metrizable spaces, and further assume that either \( X \) is zero-dimensional or
\( Y = \RR \). Let \( \xi < \omega_1 \). Then \( f \) is a Baire class \( \xi \) function if and only if it is \( \boldsymbol{\Sigma}^0_{\xi+1} \)-measurable.
\end{theorem}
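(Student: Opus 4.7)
The plan is to prove both implications simultaneously by transfinite induction on \( \xi < \omega_1 \).

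For the forward direction (Baire class \( \xi \) implies \( \boldsymbol{\Sigma}^0_{\xi+1} \)-measurable), the base case \( \xi = 0 \) is immediate since continuous maps pull open sets back to open sets. For the inductive step, write \( f \) as a pointwise limit \( f = \lim_n f_n \) with each \( f_n \) of some Baire class \( \xi_n < \xi \), hence \( \boldsymbol{\Sigma}^0_{\xi_n+1} \)-measurable by induction. Given any open \( U \subseteq Y \), use the metrizability of \( Y \) to choose closed sets \( F_m \) with \( F_m \subseteq \mathrm{int}(F_{m+1}) \) and \( U = \bigcup_m F_m \). A standard trapping argument then yields
\[
f^{-1}(U) = \bigcup_{m, N \in \omega} \bigcap_{n \geq N} f_n^{-1}(F_m),
\]
whose right-hand side is a countable union of countable intersections of sets in \( \boldsymbol{\Pi}^0_{\xi_n+1} \subseteq \boldsymbol{\Pi}^0_\xi \), hence lies in \( \boldsymbol{\Sigma}^0_{\xi+1} \).

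For the converse direction, I would first establish the key lemma that if \( A \in \boldsymbol{\Sigma}^0_{\xi+1} \) then \( \chi_A \) is of Baire class \( \xi \): write \( A = \bigcup_n B_n \) with \( B_n \in \boldsymbol{\Pi}^0_\xi \) ascending, so that \( \chi_A = \lim_n \chi_{B_n} \), and observe that each \( \chi_{B_n} \) is Baire class strictly less than \( \xi \) by induction (at successor \( \xi \) this is immediate by complementing; at limit \( \xi \) one further decomposes \( B_n \) as a countable intersection of sets from levels cofinal in \( \xi \), takes finite intersections, and applies the inductive hypothesis). Given this, a \( \boldsymbol{\Sigma}^0_{\xi+1} \)-measurable \( f \colon X \to Y \) is shown to be of Baire class \( \xi \) by expressing it as a uniform limit of countably-valued step functions whose level sets lie in \( \boldsymbol{\Sigma}^0_{\xi+1} \), and then assembling these step functions from the pieces \( \chi_{A_i} \) using either real arithmetic (when \( Y = \mathbb{R} \)) or, when \( X \) is zero-dimensional, a refinement of \( X \) into a clopen-style partition adapted to a countable basis of clopen sets of \( Y \), realizing each step function as a pointwise limit of continuous ones.

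The hardest part is this converse direction, and specifically the moment at which the hypothesis ``\( X \) zero-dimensional or \( Y = \mathbb{R} \)'' is used to lift characteristic-function information up to \( Y \)-valued information: without such an assumption one cannot in general piece \( \chi_{A_i} \)-type data into a single Baire class \( \xi \) function targeting \( Y \), and the statement genuinely fails. A secondary technical point is the limit-ordinal case of the inductive hypothesis for characteristic functions, which classically exploits the countable cofinality of every \( \xi < \omega_1 \); checking that the cofinal approximation indeed produces functions of class strictly less than \( \xi \), rather than merely \( \leq \xi \), is the delicate bookkeeping. (This is precisely the issue that will complicate the higher analogue in the body of the paper, where cofinalities need not be countable.)
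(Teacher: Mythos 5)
Your forward direction (Baire class \( \xi \) implies \( \boldsymbol{\Sigma}^0_{\xi+1} \)-measurable) is the standard trapping argument and is correct as written; as you note, it needs no hypothesis on \( X \) or \( Y \). The converse, however, rests on a key lemma that is false: it is \emph{not} true that \( A \in \boldsymbol{\Sigma}^0_{\xi+1} \) implies \( \chi_A \) is of Baire class \( \xi \). Indeed, by the forward direction you have just proved, a Baire class \( \xi \) characteristic function \( \chi_A \) must be \( \boldsymbol{\Sigma}^0_{\xi+1} \)-measurable, which (taking preimages of small neighborhoods of \( 0 \) and of \( 1 \)) forces \emph{both} \( A \) and \( X \setminus A \) to be \( \boldsymbol{\Sigma}^0_{\xi+1} \), i.e.\ \( A \in \boldsymbol{\Delta}^0_{\xi+1} \). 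So your lemma fails for any \( A \in \boldsymbol{\Sigma}^0_{\xi+1} \setminus \boldsymbol{\Pi}^0_{\xi+1} \), and such sets exist at every level in, say, the Baire space. Concretely, for \( \xi = 1 \) take \( A = \mathbb{Q} \subseteq \RR \): this is \( \boldsymbol{\Sigma}^0_2 \), but \( \chi_{\mathbb{Q}} \) is the Dirichlet function, the classical example of a Baire class \( 2 \) function which is not of Baire class \( 1 \). Your induction breaks exactly where one would expect: at \( \xi = 1 \) it would require \( \chi_{B_n} \) to be continuous for \( B_n \) closed, which holds only when \( B_n \) is clopen.

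The correct lemma is that \( \chi_A \) is of Baire class \( \xi \) for \( A \in \boldsymbol{\Delta}^0_{\xi+1} \), and the ingredient missing from your assembly step is the \emph{reduction property} for \( \boldsymbol{\Sigma}^0_{\xi+1} \): one pulls back a countable cover of \( Y \) by small open sets to a countable \( \boldsymbol{\Sigma}^0_{\xi+1} \)-cover of \( X \), reduces it to a pairwise disjoint \( \boldsymbol{\Sigma}^0_{\xi+1} \)-cover, and observes that disjointness forces each piece into \( \boldsymbol{\Delta}^0_{\xi+1} \) (its complement is the countable union of the remaining pieces). Only then can the characteristic-function lemma and closure of Baire class \( \xi \) under uniform limits be applied. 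The paper does not reprove this classical statement --- it cites \cite[Theorems 24.3 and 24.10]{Kech} --- but the cited proof is precisely this reduction-then-\( \boldsymbol{\Delta} \) route, and these are exactly the two ingredients the paper generalizes (the reduction/partition machinery of Section~\ref{sec:structuralproperties}, in particular Theorem~\ref{22.16} and Corollary~\ref{separation_partition2}, and the uniform limits of Section~\ref{sec:uniform_lim}). As written, your proposal lacks the step converting \( \boldsymbol{\Sigma} \)-information into \( \boldsymbol{\Delta} \)-information, and without it the converse direction does not go through.
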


Notice that the extra hypotheses%
\footnote{Slight variations are possible. For example, one could let \( Y \) be an interval in \( \RR \), or \( \RR^n \), or \( \CC^n \), and so on.}
on the spaces \( X \) and \( Y \) in Theorems~\ref{thm:closureunderlimits-classical} and~\ref{thm:Bairestratification-classical} cannot be avoided. For instance, there are arbitrarily complex Borel functions between the real line \( \RR \) and the Baire space \( \pre{\o}{\o}\), but the closure under pointwise limits of the class of continuous functions between such spaces reduces to the collection of constant functions.

Moving to generalized descriptive set theory, Borel and \( \boldsymbol{\Sigma}^0_\xi \)-measurable functions admit straightforward generalizations: \( \l^+ \)-Borel measurable and \( \Sii{0}{\xi} \)-measurable functions (Section~\ref{subsec:measurablefunctions}). 
As we will see in Section~\ref{sec:Borelfunctions}, the analysis of pointwise limits is instead more surprising. 
To simplify the present discussion, let us temporarily assume that \( \l \) is regular and work with \( \l \)-metrizable spaces (see Section~\ref{subsec:metr}). Although the topology on such spaces is completely determined by \( \l \)-limits, this kind of limits are no longer sufficient to generate the collection of \( \l^+ \)-Borel measurable functions.
In fact, 
when \( \l > \o \) the closure under \( \l \)-limits of the class of continuous functions is precisely the collection of all functions which are \( \Sii{0}{n} \)-measurable for some finite \( n \geq 1 \) 
(Corollary~\ref{cor:measurable_limits}). This forces us to consider other well-studied kinds of pointwise limits, i.e.\ limits over directed sets (Section~\ref{subsec:limits}), and eventually get the following result. 

\begin{theorem} \label{thm:closureunderlimits-generalized}
Let \( X \) and \( Y \) be \( \l \)-metrizable spaces of weight at most \( \l \). Then the class of \( \l^+ \)-Borel measurable functions between \( X \) and \( Y \) coincides with the closure under pointwise \( D \)-limits of the class of continuous functions, where \( D \) varies among all directed sets of size at most \( \l \).
\end{theorem}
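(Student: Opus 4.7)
The plan is to prove the two inclusions separately, using the Baire-class characterization (item~(2) of the abstract) to handle the harder direction.

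For \( \supseteq \): Every continuous function is trivially \( \l^+ \)-Borel measurable, so it suffices to show that the class of \( \l^+ \)-Borel measurable functions from \( X \) to \( Y \) is closed under pointwise \( D \)-limits whenever \( |D| \leq \l \). Fix a compatible \( \l \)-metric \( \rho \) on \( Y \), available because \( Y \) is \( \l \)-metrizable of weight at most \( \l \), and let \( (f_d)_{d \in D} \) be a net of \( \l^+ \)-Borel functions \( D \)-converging pointwise to \( f \). Given an open \( U \subseteq Y \), write \( U \) as a union of at most \( \l \) basic open balls \( B(y_\alpha, \e_\alpha) \). Unwinding the definition of the \( D \)-limit and using the triangle inequality, \( f(x) \in U \) iff there exist an index \( \alpha \), a shrinkage parameter \( \delta \) from a cofinal set of size at most \( \l \), and \( d_0 \in D \) such that \( f_d(x) \in B(y_\alpha, \e_\alpha - \delta) \) for every \( d \geq d_0 \). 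This realizes \( f^{-1}(U) \) as a union of at most \( \l \) sets of the form \( \bigcap_{d \geq d_0} f_d^{-1}(B(y_\alpha, \e_\alpha - \delta)) \); since each such intersection involves at most \( |D| \leq \l \) many \( \l^+ \)-Borel sets, \( f^{-1}(U) \) is itself \( \l^+ \)-Borel.

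For \( \subseteq \): The plan is to reduce to the Baire stratification. Any \( \l^+ \)-Borel measurable function is \( \Sii{0}{\xi} \)-measurable for some \( \xi < \l^+ \), hence by the second main theorem of this paper (item~(2) of the abstract) it is of \( \l \)-Baire class \( \xi' \) for some \( \xi' < \l^+ \). By the very definition of the \( \l \)-Baire hierarchy, such a function is obtained from continuous functions by iteratively taking pointwise \( D \)-limits with \( |D| \leq \l \); a straightforward transfinite induction on \( \xi' \) then places \( f \) in the desired closure.

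The main obstacle lies in the \( \supseteq \) direction, specifically in arranging the representation of \( f^{-1}(U) \) so that all indexing sets have size at most \( \l \). One must balance the shrinkage trick for the \( \l \)-metric, the existence of a weight-\( \l \) basis, and the role of directedness of \( D \) (as opposed to mere cofinality of an ordinal-indexed sequence). It is precisely the extra flexibility granted by arbitrary directed sets that distinguishes the conclusion from the much weaker closure obtained via \( \l \)-sequential limits, which, as the paragraph before the theorem observes, yields only functions of finite \( \Sii{0}{n} \)-rank.
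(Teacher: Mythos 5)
Your \( \supseteq \) direction is essentially the paper's own argument (the computations~\eqref{eq:preimage1} and~\eqref{eq:preimage2} in the proof of Theorem~\ref{teo:D_limits}), rephrased with metric balls in place of clopen sets; it is correct, and the indexing sets (a basis of size \( \leq \l \), a coinitial set of radii of size \( \mu \leq \l \), and \( D \) itself) all have size at most \( \l \), as required.

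The \( \subseteq \) direction has a genuine gap. The Baire stratification theorem you invoke (Theorem~\ref{thm:Bairestratification-generalized}, proved as Theorem~\ref{baire_class_xi_teo}) carries the extra hypothesis that \( Y \) be spherically complete, and as stated it also assumes \( \l \) regular (the singular case, Theorem~\ref{baire_class_xi_teo_sing}, has its own side hypotheses). Theorem~\ref{thm:closureunderlimits-generalized} imposes no such condition on \( Y \), and the reduction is not innocuous: if you embed \( Y \) into a spherically complete \( \widetilde{Y} \), the approximating functions produced by the stratification theorem take values in \( \widetilde{Y} \) rather than in \( Y \), so the conclusion does not transfer back. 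The paper instead proves the hard inclusion directly, by induction on \( \xi \): Proposition~\ref{prop:finlimit} writes every \( \Sii{0}{\xi+1} \)-measurable \( f \colon X \to Y \), for \emph{arbitrary} \( X,Y \subseteq \pre{\mu}{\l} \), as a \( \Fin_\l \)-limit of functions locally constant on finite \( \Dee{0}{\xi} \)-partitions (using the \( \l \)-reduction and separation machinery of Section~\ref{sec:structuralproperties}), with Lemma~\ref{lem:locallyconstant} handling limit \( \xi \). Note also that Theorem~\ref{baire_class_xi_teo} is itself proved via Proposition~\ref{prop:finlimit}, so your route does not avoid that work --- it defers it to a later and more delicate theorem while losing generality. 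Your argument is not circular (Theorem~\ref{baire_class_xi_teo} depends only on the computations inside the proof of Theorem~\ref{teo:D_limits}, not on its statement), and it does yield the result in the special case where \( Y \) is spherically complete, but as written it does not prove the theorem as stated.
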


The analogue of this theorem in the classical setting \( \l = \o \) holds as well, but it is subsumed by the stronger Theorem~\ref{thm:closureunderlimits-classical}, which drastically reduces the limits to be employed to a single sequential limit. 
Although this is no longer possible in the generalized setup \( \l > \o \), we observe that it is enough to use sequential limits together with limits over the partial order \( \Fin_\l \) of finite subsets of \( \l \) (Theorem~\ref{teo:D_limits}), or even just a single kind of non-sequential limit, namely, \( \widehat{\Fin}_\l \)-limits (Theorem~\ref{teo:D_limitsunique}). 
Notice also that in Theorem~\ref{thm:closureunderlimits-generalized} there is no additional hypothesis on the spaces involved: this difference from Theorem~\ref{thm:closureunderlimits-classical} is only apparent, though, as \( \lambda \)-metrizability implies zero-dimensionality when \( \lambda > \o \) (see Theorem~\ref{theorem:sikorski}).

Getting a higher analogue of Theorem~\ref{thm:Bairestratification-classical} is an even more delicate matter, 
addressed in Section~\ref{sec:long}.
It turns out that there are serious obstacles at limit levels of cofinality smaller than \( \l \). Nevertheless, we managed to find a reasonable definition of generalized Baire class \( \xi \) functions (Definition~\ref{baire_class}) and, using an argument quite different from the classical one, prove the following result (see Theorem~\ref{baire_class_xi_teo}).

\begin{theorem} \label{thm:Bairestratification-generalized}
Let \( X \) and \( Y \) be \( \l \)-metrizable spaces of weight at most \( \l \), and assume that \( Y \) is spherically complete. Let \( \xi < \l^+ \). Then \( f \colon X \to Y \) is a \( \l \)-Baire class \( \xi \) function if and only if it is \( \Sii{0}{\xi+1} \)-measurable. 
\end{theorem}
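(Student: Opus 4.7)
I plan to prove both implications by transfinite induction on $\xi < \l^+$. The forward direction ($\l$-Baire class $\xi \Rightarrow \Sii{0}{\xi+1}$-measurable) is the routine half. The base case $\xi = 0$ is immediate since continuous functions are $\Sii{0}{1}$-measurable. For the inductive step, suppose $f$ arises (as dictated by Definition~\ref{baire_class}) as a pointwise $D$-limit of a net $(f_d)_{d \in D}$ of $\l$-Baire class $< \xi$ functions with $|D| \leq \l$, so by induction each $f_d$ is $\Sii{0}{\xi}$-measurable. For any open $U \subseteq Y$ I would use the standard net-convergence identity
\[
f^{-1}(U) \;=\; \bigcup_{\a < \l} \; \bigcup_{d_0 \in D} \; \bigcap_{d \geq d_0} f_d^{-1}\!\bigl( U^{(\a)} \bigr),
\]
where $\bigl( U^{(\a)} \bigr)_{\a < \l}$ is a suitable family of inner open approximations exhausting $U$, as afforded by the weight and $\l$-metric structure of $Y$. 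Since $|D|, \l \leq \l$, the right-hand side is a $\l$-sized Boolean combination of $\Sii{0}{\xi}$-measurable preimages, and so lies in $\Sii{0}{\xi+1}$.

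The backward direction is the substantive half. Given $\Sii{0}{\xi+1}$-measurable $f \colon X \to Y$, the plan is to construct an approximating net $(g_\a)_{\a < \l}$ of $\l$-Baire class $< \xi$ functions with $g_\a \to f$ in the $D$-limit sense prescribed by Definition~\ref{baire_class}. Using Theorem~\ref{theorem:sikorski} (which gives zero-dimensionality since $\l > \o$) together with the hypothesis $\mathrm{w}(Y) \leq \l$, fix for each $\a < \l$ a clopen partition $\{ V^\a_i : i < \l \}$ of $Y$ into pieces of diameter $< 2^{-\a}$, with the partitions refining as $\a$ grows. Spherical completeness of $Y$ is then used to choose representatives $y^\a_i \in V^\a_i$ coherently across $\a$, so that for every $x \in X$ the net $\bigl( y^{\a}_{i(\a,x)} \bigr)_{\a < \l}$, where $i(\a,x)$ is the unique index with $f(x) \in V^\a_{i(\a,x)}$, actually converges to $f(x)$. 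Setting $A^\a_i := f^{-1}(V^\a_i) \in \Sii{0}{\xi+1}$ and $g_\a(x) := y^\a_i$ for $x \in A^\a_i$, the remaining task is to verify that each $g_\a$ is $\l$-Baire class $< \xi$. For successor $\xi = \eta + 1$ this is direct: each $A^\a_i$ is a $\l$-union of $\Pii{0}{\eta}$ pieces, so $g_\a$ is a $D$-limit (for an appropriate directed $D$ of size $\l$) of functions with $\Pii{0}{\eta}$-measurable level sets, which by a secondary induction belong to the class $\l$-Baire class $< \xi$.

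The main obstacle will be the limit case: when $\xi$ is a limit of cofinality $\mu \leq \l$, the inductive hypothesis only produces functions of Baire class $< \xi$ for unboundedly many levels below $\xi$, and these must be assembled into a single $D$-limit witnessing $\l$-Baire class $\xi$ membership. This is precisely the cofinality-sensitive issue flagged in the introduction to Section~\ref{sec:long}; the correct Definition~\ref{baire_class} should be arranged so that the underlying directed set $D$ simultaneously encodes the ordinal approximation along a cofinal chain in $\xi$ and the spatial approximation along the refining clopen partitions. Spherical completeness of $Y$ is essential exactly here to guarantee that the resulting diagonal net is coherent and convergent. Once this setup is in place, checking $g_\a \to f$ and that $g_\a$ has the claimed complexity reduces to the stability properties of $\Sii{0}{\xi}$-measurability under $D$-limits already isolated earlier in the paper.
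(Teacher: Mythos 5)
The backward direction of your argument has a genuine gap, and it is the off-by-one problem that the whole of Section~\ref{sec:long} is built to avoid. Your approximants \( g_\a \) are locally constant on a partition \( \{ A^\a_i = f^{-1}(V^\a_i) \mid i < \l \} \) whose pieces are only \( \Sii{0}{\xi+1} \), so each \( g_\a \) is itself merely \( \Sii{0}{\xi+1} \)-measurable; and your proposed repair --- writing \( g_\a \) as a further \( D \)-limit of functions whose level sets are \( \l \)-unions of \( \Pii{0}{\eta} \) pieces --- produces inner functions that are \( \Sii{0}{\xi} \)-measurable, hence (inductively) of \( \l \)-Baire class \( \eta \), which makes \( g_\a \) of class \( \eta+1=\xi \) rather than \( <\xi \); then \( f=\lim_\a g_\a \) only witnesses class \( \xi+1 \). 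A \( D \)-limit does not stay inside a Baire class, it generates the next one, so the "secondary induction" cannot close. Indeed, the paper's discussion around Proposition~\ref{prop:partialresult} records that descending from "locally constant on a \( \Sii{0}{\xi+1} \)-partition of size \( \l \)" to lower classes via sequential limits is exactly what the authors \emph{cannot} do in general and leave open. The paper's actual route is entirely different: given \( f \in \M_{\xi+1}(X,Y) \), Lemma~\ref{lem:changetopnew} and Corollary~\ref{cor_changetopnew} refine the topology of \( X \) inside \( \Sii{0}{\xi} \) so that \( f \) becomes \( \Sii{0}{2} \)-measurable, and Proposition~\ref{full_baire} (this is where spherical completeness enters, through superclosedness of the tree of \( Y \)) then produces in a single step a \( \l \)-limit of \( \l \)-full, hence continuous, functions; pulled back to the original topology these are \( \Sii{0}{\xi} \)-measurable, i.e.\ of class \( <\xi \) by induction. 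Your construction contains no mechanism achieving this one-level descent.

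The limit levels and the "routine" forward direction are also not as you describe. When \( \xi \) is a limit with \( \cof(\xi)<\l \), Definition~\ref{baire_class} is not a single net limit but the composite \( \llim{\Fin_\l}\bigl( \llimhatdef{\cof(\xi)}{\xi} \mathcal{B}_{<\xi}(X,Y) \bigr) \), where the inner operator carries a definability constraint: an increasing \( \Sii{0}{\xi} \)-covering witnessing eventual agreement. Your single-union convergence identity, applied to an unconstrained double limit, only yields \( \Sii{0}{\xi+2} \)-measurability, and Proposition~\ref{prop:double_lim_counterex} shows this is not an artifact: the plain double limit genuinely overshoots \( \M_{\xi+1}(X,Y) \). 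So the forward direction at these levels needs Lemma~\ref{short_limits_xi_bis}, which uses the definable covering to write \( f^{-1}(U)=\bigcup_{d}(f_d^{-1}(U)\cap X_d) \) and keep the inner limit inside \( \M_\xi(X,Y) \); and the backward direction needs the \( \Fin_\l \)-machinery of Proposition~\ref{prop:finlimit}, built from the separation property (Corollary~\ref{separation_partition2}) via nested finite \( \Dee{0}{\xi} \)-partitions, together with Remark~\ref{rem:partition} to certify the definable covering. Your sketch defers all of this to "the correct Definition should be arranged so that the directed set encodes both approximations", which is precisely the content that has to be supplied and is the heart of the theorem.
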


Although somewhat technical for the problematic levels \( \xi \), our definition of \( \l \)-Baire class \( \xi \) functions is close to optimal, as shown by the various counterexamples presented in Section~\ref{subsec:longregular}. The additional spherically completeness hypothesis on \( Y \) can be avoided using slight variations of such definition (see the discussion after Theorem~\ref{baire_class_xi_teo}).

The case of singular cardinals \( \l \) needs further adjustments, but one can still get results along the lines of Theorems~\ref{thm:closureunderlimits-generalized} and~\ref{thm:Bairestratification-generalized} (see Section~\ref{subsec:singular} and, in particular, Theorem~\ref{baire_class_xi_teo_sing}).

Along the way, we prove other structural results concerning \( \Sii{0}{\xi} \)-sets and \( \Sii{0}{\xi} \)-measurable functions, among which it is worth mentioning the following ones:
\begin{enumerate-(a)}
\item 
Higher analogues of structural properties such as the reduction property, the separation property, and alike (Section~\ref{sec:structuralproperties}).
\item 
A characterization of \( \l \)-Baire class \( 1 \) functions in terms of limits of surprisingly simple Lipschitz functions, called \( \l \)-full functions (Section~\ref{sec:baire_class_1}).
\item 
A characterization of \( \Sii{0}{\xi} \)-measurable functions in terms of \emph{uniform} limits of simpler functions (Section~\ref{sec:uniform_lim}).
\end{enumerate-(a)}

The last two items are the counterparts in the generalized context of some classical results coming from~\cite{MR09}.

\section{Preliminaries}

Throughout the paper we work in \( \ZFC \) and, unless otherwise specified, assume that \( \l \) is an uncountable cardinal satisfying \( 2^{< \l} = \l \). We also let $\mu = \cof (\lambda) $. (Although some of the results would work with any infinite regular cardinal \( \mu \).) Moreover, all topological spaces are tacitly assumed to be regular and Hausdorff, unless otherwise specified.
We assume some familiarity with set theory and general topology, and we adopt the standard notation in those fields. For all undefined notions, the reader is referred to~\cite{Jech,ER}.

\subsection{Trees and spaces of sequences} \label{subsec:treesandspacesofsequences}

Given ordinals \( \a \) and \( \b \), we denote by \(\pre{\a}{\b}\) the set of all sequences of order type \(\a\) and values in \(\b\), 
and for \( s \in \pre{\a}{\b} \) we let \( \leng(s) = \a \) be its length. 
We also set \(\pre{<\a}{\b}= \bigcup_{\g<\a}\pre{\g}{\b}\). 
We write \( s \restriction \g \) for the restriction of \( s  \) to \( \g \leq \leng(s) \), and denote by \( s {}^\smallfrown t \) the concatenation of \( s \) and \( t \); as usual, when \( t = \langle \gamma \rangle \) has length \( 1 \), we write \( s {}^\smallfrown \g \) instead of \( s {}^\smallfrown \langle \g \rangle \). 
The sequences $s$ and $t$ are comparable if \(s \subseteq t\) or \(t \subseteq s\), and incomparable otherwise.
We let \( i^{(\g)}\) be the constant sequence with length \(\g\) and value \( i \).

A set \(\tr \subseteq \pre{\a}{\b}\) is called \markdef{tree} if it is closed under initial segments, while its \markdef{body} is the set  
\( [\tr]=\{x \in \pre{\a}{\b} \mid \forall \g <\a \, (x \restriction \g \in \tr) \} \). 
Notice that, formally, \( [\tr] \) depends on the ordinal \( \a \); however, the latter will always be clear from the context, and thus it can safely be omitted from the notation. 
A tree $\tr$ is \markdef{pruned} if for all \(s \in \tr\) there is $x \in [\tr]$ such that $s \subseteq x$. 

Given two cardinals \( \kappa \geq 2 \) and \( \nu \geq \omega \), we equip the set \( \pre{\nu}{\kappa} \) with the \markdef{bounded topology} \( \tau_b \), i.e.\ the topology generated by the basic open sets of the form \( \Nbhd_s = \{ x \in \pre{\nu}{\kappa} \mid s \subseteq x \} \), for \( s \in \pre{<\nu}{\kappa} \). 
Equivalently, \( \tau_b \) is the unique topology on \( \pre{\nu}{\kappa}\) whose closed sets coincide with the collection of all sets of the form \( [ \tr ] \), for \( \tr \subseteq \pre{\nu}{\kappa} \) a pruned tree. 
This allows us to canonically associate to each set \( A \subseteq \pre{\nu}{\kappa} \) the pruned tree \( \tr_A = \{ x \restriction \alpha \mid x \in A, \alpha < \nu \} \), called the \markdef{tree of \( A \)}, which has the property that \( [\tr_A ] = \cl(A) \). The notion of closed set can be strengthened by imposing further conditions on the tree associated to it. More precisely, we say that a tree \( \tr \subseteq \pre{<\nu}{\kappa} \) is superclosed if it is pruned and \( < \nu \)-closed, that is, for all limit \( \alpha < \nu \) and \( s \in \pre{\alpha}{\kappa} \) we have \( s \in \tr \) whenever \( s \restriction \beta \in \tr \) for all \( \beta < \alpha \). Accordingly, a set \( C \subseteq \pre{\nu}{\kappa}\) is  \markdef{superclosed} if \( C = [\tr] \) for some superclosed tree \( \tr \subseteq \pre{\nu}{\kappa} \).

\subsection{Generalized metrics}\label{subsec:metr}

Metric spaces are central in classical descriptive set theory.
When moving to the generalized context, we can continue to use classical metrics if \( \mu = \omega \) (\cite{DMR}), while if \( \mu > \omega \) then \( \mathbb{R} \)-valued metrics needs to be replaced with \( \GG \)-metrics, that is, metrics taking value in a totally ordered (abelian) group \( \GG \) with degree \( \mu \) (\cite{AMRS23, phdago}).
Since it turns out that in the latter case the choice of \( \GG \) is irrelevant, we usually speak of \( \mu \)-metrizable spaces rather than 
\( \GG \)-metrizable spaces. For the same reason, we can safely assume that \( \GG \) is always a field; for the sake of definiteness, we indeed stipulate that \( \GG = \RR \) if \( \mu = \omega \), and \( \GG = \mu\text{-}\QQ \),
where \( \mu \)-\( \QQ \) is Asperò-Tsaprounis' ordered field of $\mu$–rationals (\cite{AST18}), if \( \mu > \omega \). We also fix once and for all a strictly decreasing sequence \( (r_\a)_{\a < \mu } \) coinitial in \( \GG^+ \): this is done by letting \( \hat r = 2 \) if \( \mu = \omega \) and \( \hat r = \omega \) if \( \mu > \omega \), and then setting \( r_\alpha = \hat r^{-\alpha}\). 

It is well-known that if \( \mu > \omega \), all \( \mu \)-metrizable spaces are \( \mu \)-additive, and they indeed admit a compatible \emph{\( \GG \)-ultrametric}. (This follows e.g.\ from Theorem~\ref{theorem:sikorski}.) Moreover, most of the metric related notions can be naturally adapted to generalized metrics: this includes Cauchy-completeness, which in this case refers to Cauchy sequences of length \( \mu \) rather than countable sequences. 
A stronger notion of completeness is obtained by requiring that the intersection of any decreasing sequence of closed balls is nonempty. A \( \GG \)-metric satisfying this property is called \markdef{spherically complete}. One can show that spherically complete \( \GG \)-metrics are always Cauchy-complete, but the converse might fail --- 
indeed there are even spaces which admit a compatible Cauchy-complete \( \GG \)-metric, but no spherically complete \( \GG \)-metric is compatible with their topology.

Of particular interest in generalized descriptive set theory are the \markdef{generalized Cantor space} \( \pre{\l}{2} \) and the \markdef{generalized Baire space} \( \pre{\mu}{\l} \), equipped with their bounded topology. 
The space \( \pre{\l}{2} \) is homeomorphic to a superclosed subset of \( \pre{\mu}{\l} \), and it is indeed homeomorphic to the whole \( \pre{\mu}{\l} \) if and only if \( \l \) is not weakly compact.
Both spaces are (regular Hausdorff) topological spaces of weight and density character \( \l \). Moreover, they admit natural spherically complete \( \GG \)-ultrametrics which are compatible with their topology. In the case of \( \pre{\mu}{\l} \), for distinct \( x,y \in \pre{\mu}{\l} \) we set \( d(x,y) = r_\alpha \), where \( \alpha < \mu \) is smallest such that \( x \restriction \alpha \neq y \restriction \alpha \). The case of \( \pre{\l}{2} \) is similar: we fix a strictly increasing sequence \( (\l_\a)_{\a < \mu} \) of ordinals cofinal in \( \l \), and then for distinct \( x,y \in \pre{\l}{2} \) we set \( d(x,y) = r_\alpha \) for the smallest \( \alpha < \mu \) such that \( x \restriction \l_\alpha \neq y \restriction \l_\alpha \). 
When considering \( \pre{\mu}{\l}\) and \( \pre{\l}{2}\) as \( \GG \)-metric spaces, we always tacitly refer to these specific \( \GG \)-metrics.

A very convenient result in the context of \( \mu \)-metrizable spaces is the following.

\begin{theorem}[{\cite{SIK, AMRS23, phdago}}]\label{theorem:sikorski}
Suppose that \( \mu > \omega \). For any space X of weight at most \( \l\), the following are equivalent:
\begin{enumerate-(1)}
\item \label{theorem:sikorski-1}
\(X\) is \(\mu\)-metrizable;
\item \label{theorem:sikorski-2}
\(X\) is \(\mu\)-ultrametrizable;
\item \label{theorem:sikorski-3}
\(X\) is homeomorphic to a subset of \(\pre{\mu}{\l}\).
\end{enumerate-(1)}
Moreover, \( X\) is a spherically complete \( \mu \)-(ultra)metrizable space if and only if it is homeomorphic to a superclosed subset of \( \pre{\mu}{\l} \).
\end{theorem}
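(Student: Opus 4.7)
The implication (3) \(\Rightarrow\) (2) is immediate, because the canonical \(\GG\)-ultrametric on \(\pre{\mu}{\l}\) defined just before the theorem restricts to any subspace, witnessing \(\mu\)-ultrametrizability. The implication (2) \(\Rightarrow\) (1) is trivial, since every \(\GG\)-ultrametric is in particular a \(\GG\)-metric. Hence the real content of the three-way equivalence lies in (1) \(\Rightarrow\) (3), from which (1) \(\Rightarrow\) (2) will follow as a byproduct.

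My plan for (1) \(\Rightarrow\) (3) follows Sikorski's classical strategy but exploits \(\mu\)-additivity in an essential way, which is available because \(\mu > \omega\). Starting from a compatible \(\GG\)-metric \(d\) on \(X\), the goal is to build a coherent refining system \((\mathcal{P}_\alpha)_{\alpha < \mu}\) of clopen partitions of \(X\), each of size at most \(\l\) and with cells of \(d\)-diameter at most \(r_\alpha\), and then to encode each \(x \in X\) by the sequence of indices of the cells containing it. The first nontrivial step is to establish that \(X\) is zero-dimensional: in a \(\mu\)-additive \(\mu\)-metrizable space with \(\mu > \omega\), one exploits the fact that intersections of fewer than \(\mu\) open sets are open to produce, around each point, a clopen local base. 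Granted this, I would fix a dense subset \(D = \{ d_i : i < \l \}\) of \(X\) (possible because \(w(X) \leq \l\)); at each successor stage \(\alpha\), the family \(\{ B(d_i, r_\alpha) : i < \l \}\) covers \(X\), and a well-ordering argument produces a disjoint clopen refinement of size at most \(\l\) that further refines \(\mathcal{P}_{\alpha-1}\); at limit stages \(\alpha < \mu\), \(\mu\)-additivity allows intersecting all previously defined partitions while keeping the cells clopen, and the result is further refined at scale \(r_\alpha\). Enumerating \(\mathcal{P}_\alpha = \{ U^\alpha_i : i < \l \}\) and setting \(\iota(x)(\alpha)\) to be the unique \(i\) with \(x \in U^\alpha_i\) produces an injective, continuous map \(\iota \colon X \to \pre{\mu}{\l}\) which is a homeomorphism onto its image. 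Pulling back the canonical ultrametric of \(\pre{\mu}{\l}\) along \(\iota\) then yields a compatible \(\GG\)-ultrametric on \(X\), giving (1) \(\Rightarrow\) (2).

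The main obstacle I anticipate is the bookkeeping at limit stages: intersecting \(\lvert\alpha\rvert\) partitions of size at most \(\l\) a priori produces \(\l^{\lvert\alpha\rvert}\) cells, which stays bounded by \(\l\) only thanks to the standing hypothesis \(2^{<\l} = \l\). One must also verify at each step that the new cells remain clopen (via \(\mu\)-additivity) and can be refined so as to have \(d\)-diameter at most \(r_\alpha\). Establishing zero-dimensionality in the first place is itself the key delicate point, since it fails badly for general metric spaces in the classical case \(\mu = \omega\).

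For the \emph{moreover} clause, I would work with a compatible \(\GG\)-ultrametric throughout. If \(X\) is spherically complete, take \(s \in \pre{\alpha}{\l}\) with \(\alpha < \mu\) limit and \(s \restriction \beta \in \tr_{\iota(X)}\) for all \(\beta < \alpha\). Each \(s \restriction \beta\) names a cell \(U^\beta_{s(\beta)}\), which in the ultrametric setting is a closed \(d\)-ball of radius at most \(r_\beta\); these form a decreasing sequence of closed balls, whose intersection is nonempty by spherical completeness, and any point in it is sent by \(\iota\) into \(\Nbhd_s\), proving \(s \in \tr_{\iota(X)}\). Hence \(\iota(X)\) is superclosed. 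Conversely, if \(\iota(X)\) is superclosed, pulling back the canonical \(\GG\)-ultrametric of \(\pre{\mu}{\l}\) along the homeomorphism yields a compatible \(\GG\)-ultrametric on \(X\) that is spherically complete, since any decreasing sequence of closed balls in \(\iota(X)\) corresponds to a \(<\mu\)-coherent branch of the tree whose limit lies in the tree by superclosedness and therefore gives a point in the intersection.
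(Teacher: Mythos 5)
The paper does not actually prove this theorem: it is imported from the cited sources (Sikorski, Agostini--Motto Ros--Schlicht, Agostini's thesis), so the only in-paper material to compare against is Proposition~\ref{prop:bilip}, which is a quantitative strengthening of \ref{theorem:sikorski-2} \( \Rightarrow \) \ref{theorem:sikorski-3} together with the ``moreover'' clause. Your treatment of the easy implications, of the cardinality bookkeeping at limit stages, and of the spherical-completeness/superclosedness equivalence is sound and essentially matches what the paper does in Proposition~\ref{prop:bilip} (nested schemes of balls indexed by \( \pre{<\mu}{\l} \), with nonemptiness at limit nodes supplied by spherical completeness and, conversely, spherical completeness of the pullback metric supplied by superclosedness plus prunedness). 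Your route to zero-dimensionality is also correct, though you should say explicitly why \( \mu \)-additivity holds: since \( \GG \) has degree \( \mu \) and \( \mu \) is regular, an intersection of fewer than \( \mu \) open sets containing \( x \) contains a common ball \( B_d(x,r_\alpha) \); consequently closed balls, being \( {<}\mu \)-intersections of open balls, are clopen.

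The genuine gap is in the step you describe as ``a well-ordering argument produces a disjoint clopen refinement of size at most \( \l \).'' The naive disjointification \( V_i = \bar B_d(d_i,r_\alpha) \setminus \bigcup_{j<i} \bar B_d(d_j,r_\alpha) \) fails: for \( i \geq \mu \) the subtracted set is a union of \( \geq \mu \)-many clopen sets, which is open but in general not closed, so \( V_i \) is closed but need not be open. \( \mu \)-additivity only guarantees that unions of \emph{fewer than} \( \mu \) closed sets are closed, and your cover has \( \l \)-many members. This is precisely the hard core of \ref{theorem:sikorski-1} \( \Rightarrow \) \ref{theorem:sikorski-3}, and it is why the paper's own Proposition~\ref{prop:bilip} starts from an \emph{ultra}metric, where the problem evaporates (two balls of the same radius are equal or disjoint). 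To repair your argument for a general \( \GG \)-metric you need an extra idea, e.g.\ the generalized Stone theorem: every open cover of a \( \GG \)-metric space admits a \( \mu \)-discrete open refinement \( \bigcup_{\alpha<\mu} \mathcal{V}_\alpha \) with each \( \mathcal{V}_\alpha \) discrete. A discrete family of clopen sets is pairwise disjoint with clopen union, so disjointifying across the \( \mu \)-many stages only ever subtracts a union of fewer than \( \mu \) clopen sets, and \( \mu \)-additivity then keeps everything clopen. Without this (or some equivalent device producing the compatible ultrametric first), the construction of the refining partitions \( (\mathcal{P}_\alpha)_{\alpha<\mu} \), and hence the whole implication, does not go through as written.
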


Given the importance that the spaces from Theorem~\ref{theorem:sikorski} play in this paper,
we denote by \( \metr \) the collection of all \( \mu \)-metrizable spaces of weight at most \( \l \).

With a little care, one can strengthen the implication \ref{theorem:sikorski-2} \( \Rightarrow\) \ref{theorem:sikorski-3} of Theorem~\ref{theorem:sikorski} and get the following result, whose proof is a higher analogue of~\cite[Theorem 4.1]{MR09} and works also when \( \mu = \omega \). 

\begin{proposition} \label{prop:bilip}
Let \( (X,d) \) be a \( \GG \)-ultrametric space of weight at most \( \l \). Then there is a topological embedding \( h \colon X \to \pre{\mu}{\l} \) such that \( d(h(x),h(y)) \leq d(x,y)  \), for all \( x,y \in X \). 

Moreover, we can further ensure that if \( d \) is spherically complete, then the range of \( h \) is a superclosed subset of \( \pre{\mu}{\l} \).
\end{proposition}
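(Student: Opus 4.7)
The plan is to build $h$ coordinate by coordinate via the natural partitions of $X$ into ultrametric balls. For each $\alpha < \mu$, the relation $x \sim_\alpha y \iff d(x,y) \le r_\alpha$ is an equivalence relation (by the ultrametric inequality), whose classes are clopen balls. Because a pairwise disjoint family of nonempty open sets in a space of weight at most $\l$ has size at most $\l$, I can fix an injection $I_\alpha$ of the quotient $X/{\sim_\alpha}$ into $\l$ and set $h(x)(\alpha) = I_\alpha([x]_{\sim_\alpha})$. The claimed properties of $h$ then follow routinely: if $h(x) \neq h(y)$ and $\alpha_0$ is the least coordinate of disagreement, then $x \not\sim_{\alpha_0} y$, so $d(x,y) > r_{\alpha_0} = d(h(x),h(y))$. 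This simultaneously gives injectivity (using coinitiality of $(r_\alpha)_{\alpha < \mu}$ in $\GG^+$) and the Lipschitz estimate $d(h(x),h(y)) \le d(x,y)$. A direct computation shows $h^{-1}(\Nbhd_{h(x)\restriction(\alpha+1)}) = \bar B(x, r_\alpha)$, which is clopen in $X$, so $h$ is both continuous and open onto its image.

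The main work lies in the ``moreover'' clause, for which I would verify simultaneously, under spherical completeness, that $\tr_{h(X)}$ is $<\mu$-closed and that $h(X) = [\tr_{h(X)}]$. Both reduce to the following claim: given $\alpha \leq \mu$ and $s \in \pre{\alpha}{\l}$ such that $s\restriction\beta \in \tr_{h(X)}$ for every $\beta < \alpha$, there exists $x \in X$ with $h(x)\restriction\alpha = s$. For each $\beta < \alpha$ pick a witness $x_\beta \in X$ with $h(x_\beta)\restriction\beta = s\restriction\beta$. The crucial observation is that for $\beta' < \beta < \alpha$ the images $h(x_{\beta+1})$ and $h(x_{\beta'+1})$ both extend $s\restriction(\beta'+1)$, forcing $x_{\beta+1} \sim_{\beta'} x_{\beta'+1}$; hence the closed balls $\bar B(x_{\beta+1}, r_\beta)$ form a genuine decreasing chain indexed by $\beta < \alpha$. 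Spherical completeness then provides $x \in \bigcap_{\beta<\alpha} \bar B(x_{\beta+1}, r_\beta)$, and any such $x$ satisfies $h(x)\restriction\alpha = s$, as required.

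The only delicate point is establishing the nesting of the balls in the second step, which is precisely what lets spherical completeness bite; everything else is a straightforward transfinite analogue of the classical construction from~\cite{MR09}, made uniform by the $\GG$-ultrametric technology already in place.
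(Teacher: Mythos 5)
Your proof is correct, but it takes a genuinely different (and arguably cleaner) route than the paper's. The paper constructs the inverse \( f = h^{-1} \) from a recursively built scheme \( \{ B_s \mid s \in \pre{<\mu}{\l} \} \): at each successor level every nonempty \( B_s \) is partitioned into balls of the next radius indexed by an enumeration of a dense subset, intersections are taken at limit levels, and the coordinates of \( h(x) \) record, \emph{relative to the parent ball}, which child \( x \) falls into. You instead define \( h \) directly and non-recursively: the \( \alpha \)-th coordinate is a \emph{global} label of the closed \( r_\alpha \)-ball of \( x \), obtained from an injection of \( X/{\sim_\alpha} \) into \( \l \) (available because a pairwise disjoint family of nonempty open sets has size at most the weight). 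This dispenses with the dense-set enumeration and the bookkeeping conditions of the scheme altogether, while the verifications of injectivity, the Lipschitz estimate, and bicontinuity come out essentially the same. Your handling of the ``moreover'' clause is in fact more detailed than the paper's, which merely asserts that superclosedness ``easily follows'' from its conditions: the reduction to a single claim about partial branches, and the observation that the witnesses \( x_{\beta+1} \) produce a genuinely nested chain of closed balls to which spherical completeness applies, is precisely the content being elided there. Two minor points: with the paper's convention one gets \( d(h(x),h(y)) = r_{\alpha_0+1} \) rather than \( r_{\alpha_0} \) when \( \alpha_0 \) is the first coordinate of disagreement, which only strengthens your inequality; and your partial-branch claim should be stated for \emph{limit} \( \alpha \leq \mu \) only (for successor \( \alpha \) the hypothesis says nothing about \( s \) itself and the claim fails), but the limit case is exactly what \( <\mu \)-closedness of \( \tr_{h(X)} \) and \( h(X) = [\tr_{h(X)}] \) require, so nothing is lost.
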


\begin{proof}
We define the inverse \( f = h^{-1}  \) of \( h \) by building a scheme \( \{ B_s \mid s \in \pre{<\mu}{\l} \}\) on \( X \) such that for every \( \a < \mu \) and \( s \in {}^{<\mu}{\l} \):
\begin{enumerate-(i)}
\item \label{Lip-1}
\( B_s \subseteq B_t \) whenever \( t \subseteq s \);
\item \label{Lip-2}
\( \{ B_t \mid \leng(t) = \a \} \) is a covering of \( X \) such that \( B_t \cap B_{t'} = \emptyset \) for all distinct \( t,t' \in \pre{\a}{\l}\);
\item \label{Lip-3}
if \( \leng(s) \) is a successor ordinal and \( B_s \neq \emptyset \), then \( B_s \) is the open ball \( B_s = B_d(x,r_{\leng(s)}) \), for some/any \( x \in B_s \); if \( \leng(s) \) is a limit ordinal, then \( B_s = \bigcap_{\beta < \leng(s)} B_{s \restriction \beta+1}\). 
\end{enumerate-(i)}
Condition~\ref{Lip-3} ensures that \( \diam(B_s) \leq r_{\leng(s) }\) whenever \( \leng(s)  \) is a successor ordinal. Together with condition~\ref{Lip-1}, this implies that the map \( f \) canonically induced by the scheme, which is defined by letting \( f(x) \) be the unique element in \( \bigcap_{\a < \mu} B_{x \restriction \a} \) if the latter is nonempty (otherwise \( f(x) \) is undefined), is well-defined and continuous. 
Condition~\ref{Lip-2} entails that \( f \) is a bijection on its domain. Moreover, \( f \) is an open map because each \( B_s \) is clopen by condition~\ref{Lip-3} and \( \mu \)-additivity of \( X \), 
together with the fact that~\ref{Lip-2} grants that \( f(\Nbhd_s \cap \dom(f)) = B_s \).
Therefore \( h = f^{-1} \colon X \to \pre{\mu}{\l} \) is a topological embedding, and we want to check that \( d(x,y) \leq d(f(x),f(y)) \) for every \( x ,y \in \dom(f) \subseteq \pre{\mu}{\l} \). This is clear if \( x=y \), so suppose that \( x \neq y \). Let \( \a < \mu \) be smallest such that \( x \restriction \a \neq y \restriction \a \), so that \( d(x,y)=r_\a \). Necessarily, \( \a \) is a successor ordinal.
Since \( f(x) \) witnesses \( B_{x \restriction \a} \neq \emptyset \), the latter is an open ball of radius \( r_\a \) by condition~\ref{Lip-3}.
Suppose towards a contradiction that \( d(f(x),f(y)) < r_\alpha \). Then \( f(y) \in B_{x \restriction \a }\). On the other hand, \( f(y) \in B_{y \restriction \a} \) by definition of \( f \), hence \( B_{x \restriction \a} \cap B_{y \restriction \a} \neq \emptyset \). This contradicts~\ref{Lip-2} and the choice of \( \a \). Therefore \( d(x,y) = r_\a \leq d(f(x),f(y)) \). 
The fact that the image of \( h \) is superclosed when \( d \) is spherically complete easily follows from~\ref{Lip-2} and~\ref{Lip-3}.

It remains to construct the scheme \( \{ B_s \mid s \in \pre{<\mu}{\l} \}\), and this is done by recursion on \( \leng(s) \). 
Set \( B_\emptyset = X \), and \( B_s = \bigcap_{\beta < \leng(s)} B_{s \restriction \beta+1} \) if \( \leng(s) \) is limit. 
(Notice that in the latter case condition~\ref{Lip-2} is automatically satisfied since by inductive hypothesis it already holds at all levels \( \alpha < \leng(s) \).) 
Consider now a successor ordinal \( \alpha = \beta+1 \), and assume that by inductive hypothesis \( B_s \) has been defined for all \( s \in \pre{\b}{\l} \). Fix any \( s \in \pre{\beta}{\l} \).
If \( B_{s} = \emptyset \), then we set \( B_{s {}^\smallfrown i} = \emptyset \) for all \( i < \l \). If instead \( B_{s } \neq \emptyset \), then \( B_d(x,r_{\a}) \subseteq B_{s } \) for every \( x \in B_{s } \); this is because by condition~\ref{Lip-3} we have that \( B_{s} = B_d(x,r_\beta) \) if \( \beta \) is a successor ordinal, or \( B_{s } = \bigcap_{\gamma < \beta} B_d(x,r_{\gamma+1}) \) if \( \beta \) is limit. Fix an enumeration \( (x_i)_{i < \l } \), possibly with repetitions, of a dense subset \( D \) of \( B_{s} \). 
We define \( B_{s {}^\smallfrown i} \) by recursion on \( i < \l \), ensuring along the construction that \( B_{s {}^\smallfrown i} \) is an open ball with radius \( r_\a \). First we set \( B_{s {}^\smallfrown 0} = B_d(x_0,r_\a) \). For \( i > 0 \), we let \( A =  B_s \setminus \bigcup_{j < i} B_{s {}^\smallfrown j} \) and distinguish two cases. If \( A = \emptyset \), then we let \( B_{s \conc i} = \emptyset \). If instead \( A \neq \emptyset\), then \( A = \bigcup_{x \in A } B_d(x,r_\a) \) is a nonempty open set, and we can set \( B_{s \conc i} = B_d(x_k,r_\a) \) for the smallest \( k < \l \) such that \( x_k \in A \). Conditions~\ref{Lip-1} and~\ref{Lip-3} are satisfied by construction. Condition~\ref{Lip-2}, instead, follows from the fact that it holds at level \( \beta \) by inductive hypothesis, together with the fact that \( \{ B_{s \conc i} \mid i < \l \} \) is a covering of \( B_s \) (by density of \( D \)) and by construction \( B_{s \conc i} \cap B_{s \conc j} = \emptyset \) if \( i \neq j \).
\end{proof}

It might be worth recording that
Proposition~\ref{prop:bilip} entails an analogue of Theorem~\ref{theorem:sikorski} for the case \( \mu = \omega \). 

\begin{theorem}\label{theorem:sikorski2}
Suppose that \( \cof(\l) = \omega \). For any space X of weight at most \( \l\), the following are equivalent:
\begin{enumerate-(1)}
\item \label{theorem:sikorski2-1}
\(X\) is metrizable and \( \dim(X) = 0 \), i.e., \( X \) has Lebesgue covering dimension \( 0 \);
\item \label{theorem:sikorski2-2}
\(X\) is ultrametrizable;
\item \label{theorem:sikorski2-3}
\(X\) is homeomorphic to a subset of \(\pre{\omega}{\l}\).
\end{enumerate-(1)}
Moreover, \( X\) is a completely metrizable space with \( \dim(X) = 0 \) if and only if it is completely ultrametrizable, if and only if it is homeomorphic to a closed subset of \( \pre{\omega}{\l} \).
\end{theorem}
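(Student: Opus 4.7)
The plan is to close the cycle \ref{theorem:sikorski2-3} \( \Rightarrow \) \ref{theorem:sikorski2-2} \( \Rightarrow \) \ref{theorem:sikorski2-1} \( \Rightarrow \) \ref{theorem:sikorski2-3} for the first part, mirroring the strategy used for Theorem~\ref{theorem:sikorski}, and then handle the moreover clause separately. The two easy arrows are essentially free: for \ref{theorem:sikorski2-3} \( \Rightarrow \) \ref{theorem:sikorski2-2}, I would note that the bounded topology on \( \pre{\omega}{\lambda} \) is induced by the compatible ultrametric \( d(x,y) = r_n \) (with \( n \) smallest such that \( x(n) \neq y(n) \)) already recalled in Section~\ref{subsec:metr}, and this restricts to a compatible ultrametric on any subspace; and for \ref{theorem:sikorski2-2} \( \Rightarrow \) \ref{theorem:sikorski2-1}, the strong triangle inequality implies that open balls of an ultrametric are clopen, so \( X \) has a base of clopen sets, which for a regular Hausdorff space means \( \dim(X) = 0 \).

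The substantive step \ref{theorem:sikorski2-1} \( \Rightarrow \) \ref{theorem:sikorski2-3} admits two equivalent routes. The cleanest is to invoke de~Groot's classical theorem (every metrizable space of covering dimension \( 0 \) is ultrametrizable) to obtain a compatible ultrametric on \( X \), and then apply Proposition~\ref{prop:bilip}, which is stated for arbitrary \( \mu \) and in particular covers \( \mu = \omega \). Alternatively, one may bypass de~Groot by adapting the scheme construction in the proof of Proposition~\ref{prop:bilip}: since \( \dim(X) = 0 \) together with weight at most \( \lambda \) provides a clopen base of cardinality at most \( \lambda \), at each level of the recursion one partitions every nonempty \( B_s \) into clopen pieces of arbitrarily small diameter in a compatible metric, producing the required embedding into \( \pre{\omega}{\lambda} \).

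For the moreover clause, the direction from ``closed subset of \( \pre{\omega}{\lambda} \)'' to ``completely metrizable with \( \dim = 0 \)'' is straightforward: the ultrametric \( d \) above is spherically complete---any decreasing sequence of closed balls either stabilises at some level or has radii tending to \( 0 \)---hence Cauchy-complete, and both completeness and zero-dimensionality pass to closed subspaces. For the converse, starting from a compatible complete metric \( \delta \) on \( X \) together with a clopen base furnished by \( \dim(X) = 0 \), I would rerun the scheme construction while additionally arranging that the \( \delta \)-diameters of the clopen pieces \( B_s \) go to \( 0 \) as \( \leng(s) \to \omega \). The resulting embedding \( h \colon X \to \pre{\omega}{\lambda} \) has image
\[
\{ x \in \pre{\omega}{\lambda} \mid B_{x \restriction n} \neq \emptyset \text{ for all } n < \omega \},
\]
which is closed because each condition \( B_{x \restriction n} \neq \emptyset \) is a clopen constraint on \( x \); moreover every \( x \) satisfying these conditions genuinely lies in the image, since the completeness of \( \delta \) and the shrinking of diameters force \( \bigcap_n B_{x \restriction n} \) to be a singleton. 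The remaining equivalence with complete ultrametrizability then comes for free, as the restriction of \( d \) to the closed image is a complete ultrametric on \( X \).

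The only point where care is needed is in the moreover: one must make sure that completeness of the ambient metric transfers to closedness of the image, which is why I prefer to re-do the scheme construction rather than simply quote Proposition~\ref{prop:bilip}---this lets me couple the clopen refinement (available from \( \dim(X) = 0 \)) with the diameter control in \( \delta \) in a single pass, avoiding any preliminary step of upgrading a complete ultrametric to a spherically complete one.
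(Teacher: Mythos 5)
Your overall architecture is sound and, apart from one step, the argument is correct; it differs from the paper's proof mainly in that you reprove from scratch the equivalence \ref{theorem:sikorski2-1} \( \Leftrightarrow \) \ref{theorem:sikorski2-3} (and the corresponding ``moreover'' clause), which the paper simply quotes as standard, keeping only \ref{theorem:sikorski2-2} \( \Rightarrow \) \ref{theorem:sikorski2-3} as the genuinely new content, obtained exactly as in your first route from Proposition~\ref{prop:bilip}. Your self-contained treatment of the scheme construction with \( \delta \)-diameter control in the ``moreover'' part, and the observation that the image is closed because each condition \( B_{x \restriction n} \neq \emptyset \) is a clopen constraint on \( x \), are both correct and match the standard proof of the cited fact.

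The one genuine gap is in \ref{theorem:sikorski2-2} \( \Rightarrow \) \ref{theorem:sikorski2-1}. You argue that ultrametric balls are clopen, so \( X \) has a clopen base, ``which for a regular Hausdorff space means \( \dim(X) = 0 \)''. That inference is false: a clopen base gives small inductive dimension \( \ind(X) = 0 \), and for non-separable metrizable spaces \( \ind(X) = 0 \) does not imply \( \dim(X) = 0 \) (Roy's space is a completely metrizable space of weight at most \( 2^{\aleph_0} \leq \l \) with a clopen base but covering dimension \( 1 \)). Since the spaces in this theorem have weight up to \( \l > \omega \) and are typically non-separable, you cannot appeal to the coincidence of the dimension functions that holds only in the separable case. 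The implication you want is nevertheless true --- it is one half of de~Groot's theorem, which you invoke elsewhere --- but it needs the stronger structure of an ultrametric rather than a mere clopen base: for each \( \varepsilon \) the open balls of radius \( \varepsilon \) are pairwise equal or disjoint and hence partition \( X \) into clopen sets; given an open cover \( \mathcal{U} \), assign to each \( x \) the largest ball \( B(x,r_n) \) contained in some member of \( \mathcal{U} \) and pass to the maximal balls so obtained, which form a disjoint clopen refinement of \( \mathcal{U} \). This is exactly \( \dim(X) = 0 \). With this repair the cycle closes and the rest of your proof stands.
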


\begin{proof}
The equivalence between~\ref{theorem:sikorski2-1} and~\ref{theorem:sikorski2-3} is standard (see e.g.\ \cite[Proposition 3.3.2]{DMR}). The implication~\ref{theorem:sikorski2-3} \( \Rightarrow \) \ref{theorem:sikorski2-2} is obvious, while the reverse implication~\ref{theorem:sikorski2-2} \( \Rightarrow \) \ref{theorem:sikorski2-3} follows from Proposition~\ref{prop:bilip}.
\end{proof}

\subsection{Limits} \label{subsec:limits}

Let \( X \) be a topological space, and let \( (D,\leq) \) be a directed set, still denoted by \( D \). A point \( x \in X  \) is a \markdef{\( D \)-limit} of a \( D \)-net \( (x_d)_{d \in D} \) of points from \( X \) if for every open neighborhood \( U \) of \( x \) there is \( d \in D \) such that \( x_{d'} \in U \) for all \( d' \in D \) with \( d' \geq d\). When this happens, we write \( x = \lim_{d \in D} x_d\). An important case often considered in (generalized) metrizable spaces is when \( D = \alpha \) is a limit ordinal, i.e.\ \markdef{sequential limits} (or \markdef{\( \alpha \)-limits}).

It is well-known that \( D \)-limits capture the notion of topological closure: a set \( C \subseteq X \) is (topologically) closed in an arbitrary topological space \( X \) if and only if \( \lim_{d \in D} x_d \in C \) for all directed sets \( D \) and all nets \( (x_d)_{d \in D} \) of points from \( C \). 
If \( X \) has weight \( \lambda \), then without loss of generality one can restrict the attention to directed sets \( D \) of size at most \( \lambda \), while if \( X \) is \( \mu \)-metrizable, then it is enough to consider \( \mu \)-limits. Occasionally we will also consider restricted forms of \( D \)-limits, i.e.\ we will restrict to \( D \)-limits satisfying some special extra properties --- see Section~\ref{sec:long}.

Limits can be composed in the obvious way: if \( D,D' \) are two directed sets and and \( (x_{d,d'})_{d \in D, d' \in D'} \) is a family of points of \( X \), we let \( \lim_{d \in D} \lim_{d' \in D'} x_{d,d'} \) be the point (if it exists) \( x = \lim_{d \in D} x_d \), where in turn \( x_d = \lim_{d' \in D'} x_{d,d'} \) for all \( d \in D \). The previous double limit operator will often be denoted by \( \lim_D \circ \lim_{D'} \). 

The notion of \( D \)-limit can be lifted to functions by taking pointwise limits: if \( f \)  and \( (f_d)_{d \in D} \) are functions between topological spaces \( X \) and \( Y \), then we write \( f = \lim_{d \in D} f_d \) if \( f(x) = \lim_{d \in D} f_d(x) \) for all \( x \in X \). All the previous concepts and notations can be adapted to pointwise limits of functions in the obvious way. 

If \( \mathcal{F} \) is a collection of function from \( X \) to \( Y \), we let 
\[ 
\Dlim \F  = \left\{ \lim_{d \in D} f_d \mid f_d \in \mathcal{F} \right\}
\] 
be the collection of all pointwise \( D \)-limits of functions from \( \mathcal{F} \). 
This can be extended to arbitrary families of limits by setting, for every collection \( \mathcal{D} \) of directed sets,
\( \DDlim \F = \bigcup_{D \in \mathcal{D}} \Dlim \mathcal{F} \).

\subsection{Boldface pointclasses and Borel structure} \label{subsec:measurablefunctions}

A \markdef{pointclass} \(\boldsymbol{\Gamma}\) is an operation assigning to every nonempty topological space $X$ a nonempty family \(\boldsymbol{\Gamma}(X)\subseteq \pow(X)\). A pointclass \(\boldsymbol\Gamma\) is
said to be \markdef{boldface} if it is closed under continuous preimages, that is, if $f \colon X \to Y$ is continuous and $B \in \boldsymbol\Gamma(Y)$, then $f^{-1}(B) \in \boldsymbol\Gamma(X)$. 
The dual $\check{\boldsymbol\Gamma}$ of \( \boldsymbol{\Gamma}\) is the boldface pointclass defined by $\check{\boldsymbol\Gamma}(X)= \{X \setminus A \mid A \in \boldsymbol\Gamma(X)\}$, while the ambiguous pointclass associated to $\boldsymbol\Gamma$ is given by $\boldsymbol\Delta_{\boldsymbol\Gamma}(X) = \boldsymbol\Gamma(X) \cap \check{\boldsymbol\Gamma}(X)$.
Examples of boldface pointclasses are the classes \( \Bor \), \( \Sii{0}{\xi} \), \( \Pii{0}{\xi} \), and \( \Dee{0}{\xi} \) defined below.
Let \(\boldsymbol{\Gamma} \) be a boldface pointclass, \( X \) be a topological space, and \(A \in \boldsymbol{\Gamma}(X)\).
A \markdef{\( \boldsymbol{\Gamma}\)-covering} of \( A \) is a family \( \{ A_i \mid i \in I \} \), for some index set \( I \), such that \( \bigcup_{i \in I } A_i =  A \) and \( A_i \in \boldsymbol{\Gamma}(X) \) for all \( i \in I \). A \( \boldsymbol{\Gamma} \)-covering \( \{ A_i \mid i \in I \} \) is said to be \markdef{disjoint} if \( A_i \cap A_j = \emptyset \) for all distinct \( i,j \in I \). (But notice that some \( A_i \) might be empty.)
Finally, a \markdef{\(\boldsymbol{\Gamma}\)-partition} of \(A \) is a disjoint \( \boldsymbol{\Gamma} \)-covering of \( A \) all of whose elements are nonempty. When the class \( \boldsymbol{\Gamma} \) is irrelevant, we simply drop it from the above terminology.

Let \( X \) be a topological space. A set \( A \subseteq X \) is \markdef{\( \l^+  \)-Borel} if it belongs to the \( \l^+  \)-algebra generated by the topology of \( X \), which is denoted by \( \Bor \). 
As in the classical case, the collection of $\l^+$-Borel subsets of a topological space \((X,\tau)\) can be stratified in a hierarchy by setting \(\Sii{0}{1}(X) =\tau\) and, for \(\xi> 1\),
\[
\Sii{0}{\xi}(X) =   \left\{ \bigcup\nolimits_{i<\l}A_i  \mid X \setminus A_i  \in  \bigcup\nolimits_{ \xi' < \xi}  \Sii{0}{\xi'}(X) \text{ for all } i<\l \right\}.
\]
The resulting pointclass \( \Sii{0}{\xi} \) is boldface. We denote its dual by \( \Pii{0}{\xi} \) and the associated ambiguous pointclass by \( \Dee{0}{\xi} \).
It is not hard to prove that  
\( \Bor(X) = \bigcup_{1 \leq \xi < \l^+} \Sii{0}{\xi}(X) \).
Moreover, \( \Dee{0}{\xi}(X) \subseteq \Sii{0}{\xi}(X) \subseteq \Dee{0}{\xi'}(X)\) for every \(\xi < \xi'\), and the same is true with \( \Sii{0}{\xi} \) replaced by \( \Pii{0}{\xi} \). 
Such inclusions are strict whenever \( X \) contains a copy of \( \pre{\l}{2} \), and in this case we say that the \( \l^+ \)-Borel hierarchy does not collapse. 
This essentially follows from the fact that there are \( \pre{\l}{2} \)-universal sets for both \( \Sii{0}{\xi}(\pre{\l}{2}) \) and \( \Pii{0}{\xi}(\pre{\l}{2}) \).
The class \( \Sii{0}{\xi}(X) \) is trivially closed under unions of size \( \l \). If \( \xi > 1 \) is a successor ordinal, then it is also closed under intersections of size \( < \mu \); if instead \( \xi \) is a limit ordinal, it is closed under intersections of size \( < \cof(\xi) \). Under mild assumptions on \( X \), this is optimal for spaces of weight at most \( \l \). Dual properties hold for \( \Pii{0}{\xi}(X) \), and therefore \( \Dee{0}{\xi}(X) \) is a \( \mu \)-algebra if \( \xi > 1 \) is a successor ordinal, or a \( \cof(\xi) \)-algebra if \( \xi \) is a limit ordinal. 
When \( \xi = 1 \), closure properties depend instead on the additivity of the space.
For example, if \( X \) is \( \mu \)-metrizable, then \( \Dee{0}{1}(X) \) forms again a \( \mu \)-algebra.
We refer the reader to~\cite{ACMRP} for more information on the \( \l^+ \)-Borel hierarchy.
When the space \( X \) is clear from the context, we drop it from all the notation above; on the other hand, when needed we might add a reference to the topology \( \tau \) of \( X \) and write e.g.\ \( \Sii{0}{\xi}(\tau) \).

Given two topological spaces \( X \) and \( Y \), a function \(f \colon X \to Y \) is \markdef{\(\l^+ \)-Borel measurable} if \( f^{-1}(U) \in \Bor(X) \) for all \( U \in \Sii{0}{1}(Y) \) (equivalently, for all \( U \in \Bor(Y) \)). 
If \( Y \) has weight at most \( \lambda \), then for every \( \l^+ \)-Borel function \( f \colon X \to Y \) there is \( 1 \leq \xi < \lambda^+ \) such that \( f^{-1}(U) \in \Sii{0}{\xi}(X) \) for all open sets \( U \subseteq Y \): in this case, we say that \( f \) is \markdef{\( \Sii{0}{\xi} \)-measurable}. The collection of all \( \Sii{0}{\xi} \)-measurable functions from \( X \) to \( Y \) is denoted by \( \M_\xi(X,Y) \), and to simplify the notation we also write \( \M_{<\xi}(X,Y)\) instead of \(\bigcup_{\xi' < \xi} \M_{\xi'}(X,Y) \). In a similar fashion, one can also define \( \Dee{0}{\xi} \)-measurable functions and alike.

\section{Structural properties} \label{sec:structuralproperties}

Following~\cite{DMR}, we consider higher analogues of the structural properties from~\cite[Section 22.C]{Kech}.
Given sets \(X \),\( Y\), and \(P \subseteq X \times Y\), a \markdef{uniformization} of P is a subset \(P^* \subseteq P\) such that for all $x \in X$ 
\[
\exists y \, P(x,y) \iff \exists ! y \, P^*(x,y). 
\]

\begin{definition} 
Let $\boldsymbol\Gamma$ be a boldface pointclass. 
\begin{enumerate-(1)}
\item 
$\boldsymbol\Gamma$ has the \markdef{separation property} if for every $X \in \metr$ and all disjoint sets $A, B \in$ $\boldsymbol\Gamma(X)$, there is $C \in \boldsymbol\Delta_{\boldsymbol\Gamma}(X)$ such that $A \subseteq C$ and $C \cap B=\emptyset$. 
\item 
$\boldsymbol\Gamma$ has the \markdef{$\l$-separation property} if for every $X \in \metr$ and every sequence of sets \( (A_{i})_{i \in \l} \) from \( \boldsymbol\Gamma(X) \) satisfying $\bigcap_{i<\l} A_{i}=\emptyset$, there is a sequence \( (B_i)_{i < \l }\) of sets from $\boldsymbol\Delta_{\boldsymbol\Gamma}(X)$ such that $\bigcap_{i<\l} B_{i}=\emptyset$ and  $A_{i} \subseteq B_{i}$ for every \( i < \l \).
\item 
$\boldsymbol\Gamma$ has the \markdef{reduction property} if for every  $X \in \metr$ and every $A, B \in \boldsymbol{\Gamma}(X)$, there are disjoint sets $A^{*}, B^{*} \in \boldsymbol{\Gamma}(X)$ such that \( A^{*} \subseteq A \), \( B^{*} \subseteq B \), and $A^{*} \cup B^{*}=A \cup B$. 
\item 
$\boldsymbol\Gamma$ has the \markdef{$\l$-reduction property} if for every $X \in \metr$ and  every sequence  \( (A_{i})_{i < \l} \) of sets from \( \boldsymbol\Gamma(X) \), there is a sequence \( (A_{i}^{*})_{i < \l } \) of pairwise disjoint sets from \( \boldsymbol\Gamma(X) \) such that $\bigcup_{i<\l} A_{i}^{*}=\bigcup_{i<\l} A_{i}$ and $A_{i}^{*} \subseteq A_{i}$ for every \( i < \l \).
\item 
$\boldsymbol\Gamma$ has the \markdef{ordinal $\l$-uniformization property} if for every $X \in \metr$ and every $R \in \boldsymbol{\Gamma}(X \times \l)$, 
there is a uniformization of $R$ in $\boldsymbol{\Gamma}(X \times \l)$.
\end{enumerate-(1)}
\end{definition}

Clearly, if \(\boldsymbol\Gamma \) has the $\l$-separation property then it has the separation property, and if \(\boldsymbol\Gamma \) has the $\l$-reduction property then it has the reduction property. To state the relationships among the other structural properties we need one more definition.

\begin{definition} 
A boldface pointclass $\boldsymbol\Gamma$ is \markdef{$\l$-reasonable} if 
for every \( X \in \metr \), every set \( I \) with \( |I| \leq \l \), and every family \((A_i)_{i \in I}\) of subsets of $X$, we have that
$\forall i \in I \, (A_i \in \boldsymbol\Gamma(X))$ if and only if \( A \in \boldsymbol\Gamma(X \times I) \), where
\[
A=\{(x,i) \in X \times I \mid x \in A_i\}.
\]
\end{definition}

If a boldface pointclass \(\boldsymbol\Gamma \supseteq \Dee{0}{1}\) is such that \( \boldsymbol{\Gamma}(X) \) is closed under unions of size $\l$ and intersections with clopen sets (for every \( X \in \metr \)), then  \(\boldsymbol\Gamma\) is $\l$-reasonable, which implies that also $\check{\boldsymbol\Gamma}$ and 
$\boldsymbol\Delta_{\boldsymbol\Gamma}$ are \( \l \)-reasonable. 
Therefore, all of \(\Sii{0}{\xi} \), \(\Pii{0}{\xi} \), and \( \Dee{0}{\xi} \) are $\l$-reasonable.

The following is the higher analogue of~\cite[Proposition 22.15]{Kech}, and can be proved using similar arguments.

\begin{proposition} \label{prop:regularityproperties}
Let $\boldsymbol\Gamma$ be a boldface pointclass. 
\begin{enumerate-(1)}
\item  \label{prop:regularityproperties-1}
If $\boldsymbol\Gamma$ has the reduction property, then $\check{\boldsymbol\Gamma}$ has the separation property.
\item  \label{prop:regularityproperties-2}
If $\boldsymbol\Gamma$ is closed under unions of length $\l$ and has the $\l$-reduction property, then $\check{\boldsymbol\Gamma}$ has the $\l$-separation property.
\item \label{prop:regularityproperties-3}
If $\boldsymbol\Gamma$ is $\l$-reasonable, then $\boldsymbol\Gamma$ has the $\l$-reduction property if and only if
$\boldsymbol\Gamma$ has the ordinal $\l$-uniformization property.
\item  \label{prop:regularityproperties-4}
If there is a $\pre{\l}{2}$-universal
set for $\boldsymbol\Gamma( \pre{\l}{2})$, then $\boldsymbol\Gamma$ cannot have both the reduction and the separation properties.
\end{enumerate-(1)}
\end{proposition}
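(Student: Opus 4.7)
The plan is to follow the classical arguments of Kechris's Proposition 22.15 mutatis mutandis, with the only nontrivial adjustments being to replace countable combinatorics by \( \l \)-combinatorics in parts~(2) and~(3). The main obstacle will be part~(4), where reduction, separation, and universality must be combined in a diagonal argument.

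For~(1) the strategy is a pure complement-and-reduce move: given disjoint \( A, B \in \check{\boldsymbol\Gamma}(X) \), the complements \( X \setminus A, X \setminus B \) lie in \( \boldsymbol\Gamma(X) \) and cover \( X \), so reduction applied to this pair yields disjoint \( A^*, B^* \in \boldsymbol\Gamma \) with \( A^* \subseteq X \setminus A \), \( B^* \subseteq X \setminus B \), and \( A^* \cup B^* = X \). Then \( C := B^* = X \setminus A^* \) is the desired \( \boldsymbol\Delta_{\boldsymbol\Gamma} \)-separator, containing \( A \) and disjoint from \( B \). For~(2) I would run the identical argument with \( \l \)-reduction applied to the family \( (X \setminus A_i)_{i<\l} \) and set \( B_i := X \setminus A_i^* = \bigcup_{j \ne i} A_j^* \). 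Here the closure of \( \boldsymbol\Gamma \) under unions of length \( \l \), which is part of the hypothesis of the statement, is exactly what guarantees \( B_i \in \boldsymbol\Gamma \) (hence in \( \boldsymbol\Delta_{\boldsymbol\Gamma} \)). The relations \( A_i \subseteq B_i \) and \( \bigcap_i B_i = \emptyset \) are then immediate.

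For~(3) I would exploit \( \l \)-reasonableness as a natural bijection between a set \( R \in \boldsymbol\Gamma(X \times \l) \) and its family of sections \( A_i = \{x \mid (x,i) \in R\} \), with inverse \( R = \{(x,i) \mid x \in A_i\} \). Under this correspondence, a \( \boldsymbol\Gamma \)-uniformization \( R^* \) of \( R \) is literally the same data as a \( \l \)-reducing family \( (A_i^*)_{i<\l} \) for \( (A_i)_{i<\l} \): pairwise disjointness of the \( A_i^* \) encodes uniqueness, and \( \bigcup_i A_i^* = \bigcup_i A_i \) encodes the equivalence \( \exists i\, (x,i) \in R \iff \exists! i\, (x,i) \in R^* \). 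The equivalence of the two properties is then a purely formal translation in both directions.

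Part~(4) is where I expect the genuine difficulty. The starting point is the usual diagonal argument applied to a \( \pre{\l}{2} \)-universal set \( U \in \boldsymbol\Gamma(\pre{\l}{2} \times \pre{\l}{2}) \): the diagonal \( D := \{x \mid (x,x) \in U\} \) lies in \( \boldsymbol\Gamma(\pre{\l}{2}) \) by boldface closure under the continuous map \( x \mapsto (x,x) \), while a short diagonal contradiction (if \( \pre{\l}{2} \setminus D = U_{x_0} \in \boldsymbol\Gamma \), then \( x_0 \in D \iff x_0 \notin D \)) shows \( D \notin \check{\boldsymbol\Gamma}(\pre{\l}{2}) \), hence \( D \notin \boldsymbol\Delta_{\boldsymbol\Gamma} \). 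Assuming toward contradiction that \( \boldsymbol\Gamma \) had both reduction and separation, I would apply reduction to the pair \( U, U^T \), where \( U^T = \{(x,y) \mid (y,x) \in U\} \in \boldsymbol\Gamma \) by boldness, to obtain canonical disjoint refinements \( U^*, U^{T*} \in \boldsymbol\Gamma \); then separation applied to \( U^*, U^{T*} \) yields \( C \in \boldsymbol\Delta_{\boldsymbol\Gamma}(\pre{\l}{2} \times \pre{\l}{2}) \) separating them. Pulling \( C \) back along the diagonal and appealing to universality should force \( D \) into \( \check{\boldsymbol\Gamma} \), contradicting the previous paragraph. The hard part is organizing this final diagonal evaluation cleanly; however, since every ingredient is phrased purely in terms of boldface closure and abstract properties of a universal set, the combinatorial content never touches the countable/uncountable dichotomy, and the classical argument should transfer without further modification.
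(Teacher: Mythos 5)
Parts~(1), (2), and~(3) of your proposal are correct and are precisely the classical arguments the paper itself appeals to (it gives no proof, deferring to Kechris's Proposition 22.15): the complement-and-reduce trick, with closure under \( \l \)-unions invoked exactly where you invoke it to get \( B_i = \bigcup_{j \neq i} A_j^* \in \boldsymbol\Gamma \), and the sections-versus-relations dictionary supplied by \( \l \)-reasonableness.

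Part~(4), however, contains a genuine gap, and it is not just a matter of ``organizing the final diagonal evaluation cleanly'': the pair \( (U, U^T) \) is the wrong pair to reduce. The classical argument needs a pair \( (V_0, V_1) \) of \( \boldsymbol\Gamma \)-sets that is \emph{doubly universal}, i.e.\ such that for all \( A, B \in \boldsymbol\Gamma(\pre{\l}{2}) \) there is a \emph{single} parameter \( x \) with \( (V_0)_x = A \) and \( (V_1)_x = B \); this is obtained from a pairing homeomorphism \( \pre{\l}{2} \cong \pre{\l}{2} \times \pre{\l}{2} \) by setting \( V_i = \{ (x,y) \mid ((x)_i, y) \in U \} \). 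Your \( U \) and \( U^T \) do not have this property --- there is no reason a single \( x \) should realize \( A \) as the row \( U_x \) and \( B \) as the column \( \{ y \mid (y,x) \in U \} \) --- and, worse, \( U \) and \( U^T \) coincide on the diagonal, so reducing them and pulling the separator \( C \) back along \( \delta(x) = (x,x) \) merely splits \( D = \delta^{-1}(U) \) into two disjoint \( \boldsymbol\Gamma \)-pieces separated by a \( \boldsymbol\Delta_{\boldsymbol\Gamma} \)-set; this gives no information forcing \( D \in \check{\boldsymbol\Gamma} \) and no contradiction. The correct diagonalization is also not against your \( D \): after reducing the doubly universal pair to \( (V_0^*, V_1^*) \), one sets \( D_0 = \{ x \mid (x,x) \in V_1^* \} \) and \( D_1 = \{ x \mid (x,x) \in V_0^* \} \), which are disjoint \( \boldsymbol\Gamma \)-sets. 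A purported separator \( C \in \boldsymbol\Delta_{\boldsymbol\Gamma} \) of \( D_0 \) from \( D_1 \) has both \( C \) and \( \pre{\l}{2} \setminus C \) in \( \boldsymbol\Gamma \), hence coded by parameters \( x_0, x_1 \); evaluating at \( x = \langle x_0, x_1 \rangle \), where \( (V_0^*)_x = C \) and \( (V_1^*)_x = \pre{\l}{2} \setminus C \) because reduction does not alter an already disjoint pair, yields \( x \in C \iff x \notin C \). You should replace your part~(4) with this argument.
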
 

The separation property admits a technical variation that will be used later on.

\begin{corollary}(Folklore)\label{separation_partition}
Suppose that $\boldsymbol\Gamma$ has the separation property. Let \( X \in \metr \) be such that \( \boldsymbol{\Gamma}(X) \) is closed under unions and intersections of size at most \( \nu \), for some cardinal \( \nu \). 
Let \( C \in \boldsymbol{\Delta}_{\boldsymbol{\Gamma}}(X) \) and \( (P_i)_{i < \nu} \) be a family of pairwise disjoint nonempty \( \boldsymbol{\Gamma}(X)\)-subsets of \( C \). Then there is a \( \boldsymbol{\Delta}_{\boldsymbol{\Gamma}} \)-partition \( \{ C_i \mid i < \nu \} \) of \( C \) such that \( P_i \subseteq C_i \) for every \( i < \nu \).
\end{corollary}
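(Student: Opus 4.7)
The natural strategy is a two-step construction: first use the separation property to pull the \( P_i \)'s apart by \( \boldsymbol{\Delta}_{\boldsymbol{\Gamma}} \)-separators, and then massage those separators into a true partition of \( C \).

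For the first step, fix \( i < \nu \) and consider the pair \( P_i \) and \( Q_i := \bigcup_{j \neq i} P_j \). The set \( Q_i \) belongs to \( \boldsymbol{\Gamma}(X) \) by the assumed closure of \( \boldsymbol{\Gamma}(X) \) under unions of size at most \( \nu \), and by hypothesis \( P_i \cap Q_i = \emptyset \). The separation property therefore yields \( D_i \in \boldsymbol{\Delta}_{\boldsymbol{\Gamma}}(X) \) with \( P_i \subseteq D_i \) and \( D_i \cap Q_i = \emptyset \); in particular, \( D_i \cap P_j = \emptyset \) whenever \( j \neq i \). Replacing \( D_i \) with \( D_i \cap C \in \boldsymbol{\Delta}_{\boldsymbol{\Gamma}}(X) \) if necessary (which is possible because closure of \( \boldsymbol{\Gamma}(X) \) under unions and intersections of size \( \leq \nu \) transfers to \( \boldsymbol{\Delta}_{\boldsymbol{\Gamma}}(X) \)), we may arrange \( D_i \subseteq C \).

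For the second step, fix any distinguished index \( i^* < \nu \) (such an index exists because the hypothesis that each \( P_i \) is nonempty forces \( \nu \geq 1 \)). For \( i \neq i^* \) define by a peeling recursion
\[
C_i \;=\; D_i \setminus \bigcup_{j < i, \, j \neq i^*} D_j,
\]
and then set \( C_{i^*} = C \setminus \bigcup_{i \neq i^*} C_i \). Each \( C_i \) with \( i \neq i^* \) belongs to \( \boldsymbol{\Delta}_{\boldsymbol{\Gamma}}(X) \) because it is a boolean combination of at most \( \nu \) sets from \( \boldsymbol{\Delta}_{\boldsymbol{\Gamma}}(X) \); consequently \( C_{i^*} \) does too. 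The pairwise disjointness of the family \( \{ C_i \mid i < \nu \} \) and the covering equality \( \bigcup_{i < \nu} C_i = C \) are built into the definitions.

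It remains to verify that \( P_i \subseteq C_i \) for every \( i < \nu \), which in turn guarantees that each \( C_i \) is nonempty. For \( i \neq i^* \) this is immediate: \( P_i \subseteq D_i \subseteq C \), and for any \( j \neq i \) the separator \( D_j \) misses \( P_i \). For \( i = i^* \), note that \( P_{i^*} \subseteq C \), while for every \( i \neq i^* \) we have \( C_i \subseteq D_i \) and \( D_i \cap P_{i^*} = \emptyset \), so \( P_{i^*} \) is disjoint from \( \bigcup_{i \neq i^*} C_i \) and thus lies in \( C_{i^*} \). The only nonroutine step is this second one: the separators \( D_i \) need not be pairwise disjoint nor cover \( C \), so one must both peel off overlaps and dump the leftover into a single distinguished cell, and verifying that the distinguished cell still contains its target \( P_{i^*} \) is what requires the full strength of the separation property (through the fact that each \( D_i \) avoids \emph{all} other \( P_j \), not just \( P_i \)'s complement).
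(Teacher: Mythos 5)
Your proposal is correct and follows essentially the same route as the paper: separate each \( P_i \) from \( \bigcup_{j \neq i} P_j \) using the separation property, then disjointify the resulting \( \boldsymbol{\Delta}_{\boldsymbol{\Gamma}} \)-separators by peeling and absorb the leftover of \( C \) into one distinguished cell. The only differences are cosmetic (the paper fixes the distinguished index to be \( 0 \) and adds the leftover explicitly, whereas you take an arbitrary \( i^* \) and define its cell as the complement of the others).
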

 
\begin{proof}
For each \( i < \nu \), let \( D_i = \bigcup_{j \neq i} P_j\). Then \( P_i \) and \( D_i \) are disjoint \( \boldsymbol{\Gamma}\)-sets. Applying the separation property we get \( C'_i \in \boldsymbol{\Delta}_{\boldsymbol{\Gamma}}(X) \) such that \( P_i \subseteq C'_i \) and \( C'_i \cap P_j = \emptyset\) for every \( j < \nu \) different from \( i \). Since our hypotheses imply that \( \boldsymbol{\Delta}_{\boldsymbol{\Gamma}}(X)\) is a \( \nu^+ \)-algebra, it is enough to let \( C_0 = (C \cap C'_0) \cup \left(C \setminus \bigcup_{ j < \nu} C'_j \right) \) and \( C_i = (C \cap C'_i) \setminus \bigcup_{j < i} C'_i \), for \( 0 < i < \nu \).
\end{proof}

We now want to prove the analogue in generalized descriptive set theory of \cite[Theorem 22.16]{Kech}. The straightforward generalization of the original argument would require $\Sii{0}{\xi}(X)$ to be closed under intersections of length $\l$, which is true if \( \l \) is regular and $\xi$ is either a successor ordinal or a limit ordinal with cofinality \( \l \), but fails in all other cases.
As a remedy, following~\cite[Proposition 4.2.1]{DMR}, which proves the same result for the case \( \mu = \omega \), we can exploit the following observation.

\begin{lemma}\label{lem:reg}
Assume that \( \mu > \omega \). Let $X \in \metr$ and $1 \leq \xi<\l^+ $. 
\begin{enumerate-(1)}
\item \label{lem:reg1} 
If \(\xi\) is a successor ordinal and \(A\in \Sii{0}{\xi}(X)\), then \(A= \bigcup_{i<\mu} A_{i}\) for some family \( (A_i)_{i < \mu} \) of sets in \( \Dee{0}{\xi}(X) \). Moreover, we can assume that \( A_i \subseteq A_j \) for all \( i \leq j < \mu \) or, alternatively, that the sets \( A_i \) are pairwise disjoint. 
\item \label{lem:reg2} 
If \(\xi\) is a limit ordinal, \( ( \xi_i)_{i < \cof(\xi)} \) is a strictly increasing sequence cofinal in \( \xi \), and \(A\in \Sii{0}{\xi}(X)\), then \(A= \bigcup_{i<\cof(\xi)} A_{i}\) for some family \( (A_i)_{i < \cof(\xi)} \) such that \( A_i \in \Dee{0}{\xi_i+2} \) for all \( i < \cof(\xi) \).
Moreover, we can assume that \( A_i \subseteq A_j \) for all \( i \leq j < \cof(\xi) \) or, alternatively, that the sets \( A_i \) are pairwise disjoint.
\item \label{lem:reg3} 
If \(\nu<\l\) and \(A_j \in \Sii{0}{\xi}(X)\) for every \(j<\nu\), then \(\bigcap_{j<\nu} A_{j} \in \Dee{0}{\xi+1}(X)\). 
\end{enumerate-(1)}
The dual results obtained by replacing \( \Sii{0}{\xi} \) with \( \Pii{0}{\xi} \) and swapping the role of unions and intersections hold as well.
\end{lemma}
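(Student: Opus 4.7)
The plan is to handle the three parts in the order (3), then (1), then (2), since (3) provides the combinatorial core that both (1) and (2) use at their successor-based steps. The dual statements for \( \Pii{0}{\xi} \) follow by taking complements throughout, so I will only argue the \( \Sii{0}{\xi} \)-versions.

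For part (3), the inclusion \( \bigcap_{j<\nu} A_j \in \Pii{0}{\xi+1} \) is immediate: each \( X \setminus A_j \) lies in \( \Pii{0}{\xi}\subseteq \Sii{0}{\xi+1} \), and \( \Sii{0}{\xi+1} \) is closed under \( \l \)-unions (hence under \( \nu \)-unions). For the harder direction \( \bigcap_{j<\nu} A_j \in \Sii{0}{\xi+1} \), I would distribute: writing each \( A_j = \bigcup_{i<\l} B_{j,i} \) with \( B_{j,i} \) in the appropriate lower class --- \( \Pii{0}{\xi'} \) when \( \xi = \xi'+1 \), some \( \Pii{0}{\eta_{j,i}} \) with \( \eta_{j,i}<\xi \) when \( \xi \) is a limit, and among basic clopens of \( X \) when \( \xi=1 \) (using the \( \mu \)-additivity of \( \metr \)-spaces) --- one obtains
\[
\bigcap_{j<\nu} A_j \;=\; \bigcup_{f \in \pre{\nu}{\l}} \bigcap_{j<\nu} B_{j,f(j)}.
\]
The hypothesis \( 2^{<\l}=\l \) forces \( |\pre{\nu}{\l}|=\l^\nu=\l \), so this is a \( \l \)-union of sets each of which lies in \( \Pii{0}{\xi} \) (by \( \l \)-closure of \( \Pii{0}{\xi'} \), respectively \( \Pii{0}{\xi} \), under intersections when \( \xi>1 \); and by the fact that an intersection of clopens is at worst closed when \( \xi=1 \)). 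Hence \( \bigcap_{j<\nu}A_j \in \Sii{0}{\xi+1} \).

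For (1), with \( \xi=\xi'+1 \) and \( A=\bigcup_{i<\l}B_i \), \( B_i\in\Pii{0}{\xi'} \), I fix a strictly increasing cofinal sequence \( (\l_\a)_{\a<\mu} \) in \( \l \) and set \( A_\a=\bigcup_{i<\l_\a}B_i \). By the dual of (3) applied at level \( \xi' \), each \( A_\a \) lies in \( \Dee{0}{\xi'+1}=\Dee{0}{\xi} \), and \( (A_\a)_\a \) is increasing with union \( A \). The pairwise-disjoint version is \( A'_\a=A_\a\setminus\bigcup_{\b<\a}A_\b \); the subtracted set is a union of \( |\a|<\mu \) sets in \( \Dee{0}{\xi} \), hence remains in \( \Dee{0}{\xi} \), which is a \( \mu \)-algebra at successor levels. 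For (2), with \( \xi \) limit and \( A=\bigcup_{i<\l}B_i \), \( B_i\in\Pii{0}{<\xi} \), let \( \a(i)<\cof(\xi) \) be least such that \( B_i\in\Pii{0}{\xi_{\a(i)}} \), let \( J_\a=\{i:\a(i)=\a\} \), and set \( \tilde A_\a=\bigcup_{i\in J_\a}B_i\in\Sii{0}{\xi_\a+1}\subseteq\Dee{0}{\xi_\a+2} \). The increasing version is \( A^{\mathrm{inc}}_\a=\bigcup_{\b\le\a}\tilde A_\b\in\Sii{0}{\xi_\a+1}\subseteq\Dee{0}{\xi_\a+2} \), using \( \l \)-union closure of \( \Sii{0}{\xi_\a+1} \) and the fact that \( \xi_\b+1\le\xi_\a+1 \) for all \( \b\le\a \). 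The disjoint version \( A'_\a=\tilde A_\a\setminus\bigcup_{\b<\a}\tilde A_\b \) is the intersection of a \( \Sii{0}{\xi_\a+1} \) set with a \( \Pii{0}{\xi_\a+1} \) set, and hence lies in the \( \mu \)-algebra \( \Dee{0}{\xi_\a+2} \).

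The main subtleties I expect are two: the cardinal arithmetic \( |\pre{\nu}{\l}|=\l \) derived from \( 2^{<\l}=\l \), which is what keeps the distributive union in (3) of size at most \( \l \) regardless of the regularity of \( \l \); and the ``\(+2\)'' rather than ``\(+1\)'' in the conclusion of (2), which provides exactly the algebraic room needed at limit stages \( \a \) where complementation is unavoidable but neither \( \Sii{0}{\xi_\a+1} \) nor \( \Pii{0}{\xi_\a+1} \) is closed under it. Once those two points are in place, the rest is careful bookkeeping within the recursive definition of the \( \l^+ \)-Borel hierarchy.
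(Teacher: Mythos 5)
Your parts \ref{lem:reg1} and \ref{lem:reg2} essentially reproduce the paper's argument (including the disjointification inside the \( \mu \)-algebra \( \Dee{0}{\xi} \) and the ``\( +2 \)'' bookkeeping at limit \( \xi \)), but your proof of part \ref{lem:reg3} has a genuine gap, and since \ref{lem:reg1} invokes the dual of \ref{lem:reg3} at lower levels, the induction does not close. You distribute the intersection over the \emph{raw} decompositions \( A_j = \bigcup_{i<\l} B_{j,i} \), obtaining a union indexed by \( \pre{\nu}{\l} \), and you assert that \( 2^{<\l}=\l \) forces \( |\pre{\nu}{\l}| = \l^{\nu} = \l \). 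This is true when \( \l \) is regular, but the standing hypotheses (and the lemma's hypothesis \( \mu>\omega \)) allow \( \l \) singular with \( \mu=\cof(\l)>\omega \); in that case \( 2^{<\l}=\l \) makes \( \l \) a strong limit, and for \( \mu \leq \nu < \l \) K\"onig's theorem gives \( \l^{\nu} \geq \l^{\cof(\l)} > \l \) (e.g.\ \( \l = \beth_{\omega_1} \), \( \nu = \omega_1 \)). The distributive union then has more than \( \l \) terms and cannot be absorbed into \( \Sii{0}{\xi+1} \). Note also that your successor-case computation would show \( \Sii{0}{\xi} \) closed under \( {<}\l \)-intersections, which the paper points out holds only for intersections of size \( <\mu \) --- another sign the step cannot survive the singular case.

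This is precisely why the paper proves \ref{lem:reg1} and \ref{lem:reg2} at level \( \xi \) \emph{before} \ref{lem:reg3} at level \( \xi \), the opposite of your ordering: one first compresses each \( A_j \) into a union of only \( \kappa \)-many \( \Dee{0}{\xi} \)-pieces, with \( \kappa=\mu \) in the successor case and \( \kappa=\cof(\xi) \) in the limit case, and only then distributes, over \( \pre{\nu}{\kappa} \). In the regular case \( |\pre{\nu}{\kappa}| \leq \l^{<\l}=\l \) as in your computation; in the singular case \( \kappa<\l \), so \( |\pre{\nu}{\kappa}| \leq 2^{\max\{\nu,\kappa\}} < \l \) by strong limitness. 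To repair your write-up you should adopt this ordering, and you must also supply the base case of \ref{lem:reg1} at \( \xi=1 \), which your recipe (needing \( B_i \) in a class ``below'' \( \Sii{0}{1} \)) does not cover: the paper writes an open \( A \subseteq X \subseteq \pre{\mu}{\l} \) as the increasing union over \( i<\mu \) of the clopen sets \( \bigcup\{\Nbhd_s \cap X \mid \leng(s)=i,\ \Nbhd_s\cap X \subseteq A\} \), and this is what seeds both the induction and the level-\( 1 \) instance of \ref{lem:reg3}.
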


\begin{proof}
The dual results can be obtained by taking complements and using De Morgan's rules, so we only consider the case of the classes \( \Sii{0}{\xi}(X) \).
By Theorem \ref{theorem:sikorski}, we can assume that $X \subseteq \pre{\mu}{\l}$. 

We argue by induction on \( 1 \leq \xi < \l^+ \). For the basic case \(\xi=1\), observe that every \( A \in \Sii{0}{1}(X) \) can be written as \( \bigcup_{i < \mu} A_i \), where 
\[
A_i = \bigcup \{ \Nbhd_s \cap X \mid \leng(s) = i,  \Nbhd_s \cap X \subseteq A \} .
\]
Each \( A_i \) belongs to \( \Dee{0}{1}(X) \) because 
\[ 
X \setminus A_i = \bigcup \{ \Nbhd_t \cap X \mid \leng(t) = i , \Nbhd_t \cap X \not\subseteq A \} .
\]
Clearly, \( A_i \subseteq A_j \) whenever \( i \leq j \). If instead we want the sets \( A_i \) to be pairwise disjoint, we replace each \( A_i \) with \( A_i \setminus \bigcup_{j < i} A_i \): such sets are still clopen because \( \Dee{0}{1}(X) \) is a \( \mu \)-algebra (as \( X \in \metr \) is \( \mu \)-additive). This proves~\ref{lem:reg1}.
Part~\ref{lem:reg2} needs not to be considered in the case \( \xi = 1 \).
As for~\ref{lem:reg3}, it suffices to prove that \( \bigcap_{j < \nu} A_j \in \Sii{0}{2}(X) \). 
Using~\ref{lem:reg1}, write every \( A_j \) as \( A_j = \bigcup_{i< \mu} A_{j,i} \) with \( A_{j,i} \in \Dee{0}{1}(X) \). Then
\[
\bigcap_{j < \nu} A_j = \bigcap_{j < \nu} \bigcup_{i < \mu } A_{j,i} = \bigcup_{s \in \pre{\nu}{\mu}} \bigcap_{j < \nu } A_{j,s(j)}.
\]
If \( \l \) is regular, then our assumption \( 2^{< \l} = \l \) entails \( \l^{< \l} = \l \), and thus \( |\pre{\nu}{\mu} | \leq \l^{<\l} = \l \) because \( \mu = \l \). If \( \l \) is singular, then \( 2^{< \l} \) entails that \( \l \) is strong limit, and thus \( |\pre{\nu}{\mu}| \leq \kappa^\kappa = 2^\kappa < \l \) for \( \kappa = \max \{ \mu,\nu \} < \l \). Therefore in all cases \( |\pre{\nu}{\mu}| \leq \lambda \), and thus \( \bigcap_{j < \nu} A_j \in \Sii{0}{2}(X) \), as desired.

Let now $\xi > 1 \). 
Part~\ref{lem:reg1} is relevant only when \( \xi \) is successor, so assume that \( \xi = \xi'+ 1\). Write \( A = \bigcup_{j < \l} B_j\) with \( B_j \in \Pii{0}{\xi'}(X) \). Fix a strictly increasing sequence \( (\l_i)_{i < \mu} \) cofinal in \( \l \). Then each \( A_i = \bigcup_{j < \l_i } B_j  \) belongs to \( \Dee{0}{\xi'+1}(X) = \Dee{0}{\xi}(X) \) because~\ref{lem:reg3} holds at level \( \xi' \) by inductive hypothesis, and clearly \( A = \bigcup_{i < \mu} A_i \). Moreover, if \( i \leq j < \mu \) then \( A_i \subseteq A_j \) by construction. If instead we want the sets \( A_i \) to be pairwise disjoint, then we again replace each \( A_i \) with \( A_i \setminus \bigcup_{j < i} A_j \): since \( \Dee{0}{\xi}(X) \) is a \( \mu \)-algebra (because \( \xi \) is a successor ordinal), this works. 

If \( \xi \) is limit, we instead need to prove part~\ref{lem:reg2}. 
By definition, \(A= \bigcup_{j<\l} B_{j}\), where  \(B_{j} \in  \bigcup_{ \xi' < \xi} \Pii{0}{\xi'}(X) \). 
For all \(i<\cof(\xi)\), set \(A_{i} = \bigcup \{ B_{j} \mid   B_{j} \in  \Pii{0}{\xi_i}(X) \}\). 
Then \( A_i \in \Sii{0}{\xi_i+1}(X) \subseteq \Dee{0}{\xi_i+2}(X) \). Moreover, \( A_i \subseteq A_j \) if \( i \leq j \), and \( A = \bigcup_{i < \cof(\xi)} A_i \). If instead we want the sets \( A_i \) to be pairwise disjoint, we once again replace each \( A_i \) with \( A_i \setminus \bigcup_{j < i} A_j \), which belongs to \( \Dee{0}{\xi_i+2}(X) \) because the sets \( A_i \) are in \( \Sii{0}{\xi_i+1}(X) \).  

Finally, we prove \ref{lem:reg3}. Again, we only need to check that $\bigcap_{j<\nu} A_j \in \Sii{0}{\xi+1}$. 
For all $i<\nu$, apply%
\footnote{This can be done because in the previous two paragraphs we already proved that \ref{lem:reg1} and \ref{lem:reg2} hold at level $\xi$.}
\ref{lem:reg1} or~\ref{lem:reg2}, depending on whether $\xi$ is a successor or a limit ordinal, to get $A_j=\bigcup_{i<\k} B_{j,i}$ with \(B_{j,i} \in \Dee{0}{\xi}(X)\) and $\k=\mu$ if $\xi$ is successor or $\k=\cof(\xi)$ if \( \xi \) is limit.  
Arguing as in the case \( \xi = 1 \), we have
\( \bigcap_{j<\nu} A_j=\bigcup_{s \in \pre{\nu}{\k}} \bigcap_{j<\nu} B_{j, s(j)} \),
and we have to check that in all cases \( |\pre{\nu}{\k}| \leq \l \).
If \( \l \) is regular, then \( |\pre{\nu}{\k}| \leq \l^{< \l} = \l \) because \( \k \leq \l \).
If instead \( \l \) is singular, then \( \k < \l \): therefore \( |\pre{\nu}{\k}| \leq 2^{\max\{ \nu,\k \}} < \l \) because \( \l \) is strong limit.
\end{proof}

\begin{theorem}\label{22.16}
For any $1 < \xi<\l^+ $, the boldface pointclass $\Sii{0}{\xi}$ has the ordinal $\l$-uniformization property, and thus the $\l$-reduction
property, but it does not have the $\l$-separation property. The class $\Pii{0}{\xi}$  has the $\l$-separation property, but not the $\l$-reduction property.

The same is true for \( \xi = 1 \) if either \( \mu > \omega \), or \( \mu = \o \) and we restrict the attention to spaces \( X \in \metr \) with \( \dim(X) = 0 \) (equivalently: to ultrametrizable spaces of weight at most \( \l \)).
\end{theorem}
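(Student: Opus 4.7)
My plan is to derive essentially the whole statement from the single technical ingredient that $\Sii{0}{\xi}$ has the $\l$-reduction property. Granted this, Proposition~\ref{prop:regularityproperties}\ref{prop:regularityproperties-3} yields the ordinal $\l$-uniformization property for $\Sii{0}{\xi}$ (which is $\l$-reasonable); Proposition~\ref{prop:regularityproperties}\ref{prop:regularityproperties-2} yields the $\l$-separation property for $\Pii{0}{\xi}$, using that $\Sii{0}{\xi}$ is closed under $\l$-unions; the existence of $\pre{\l}{2}$-universal sets for $\Sii{0}{\xi}(\pre{\l}{2})$ noted in Section~\ref{subsec:measurablefunctions}, combined with Proposition~\ref{prop:regularityproperties}\ref{prop:regularityproperties-4}, prevents $\Sii{0}{\xi}$ from having even the separation property (a fortiori the $\l$-separation property); and applying Proposition~\ref{prop:regularityproperties}\ref{prop:regularityproperties-1} with $\boldsymbol{\Gamma} = \Pii{0}{\xi}$ rules out the reduction property (hence the $\l$-reduction property) for $\Pii{0}{\xi}$, since otherwise $\Sii{0}{\xi}$ would enjoy separation.

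The heart of the proof is thus the $\l$-reduction property for $\Sii{0}{\xi}$. Given $(A_i)_{i < \l}$ in $\Sii{0}{\xi}(X)$, I will apply Lemma~\ref{lem:reg} to write each $A_i$ as a pairwise disjoint union $A_i = \bigcup_{j < \k} B_{i,j}$, where either $\k = \mu$ and $B_{i,j} \in \Dee{0}{\xi}(X)$ (if $\xi = 1$ or $\xi$ is a successor), or $\k = \cof(\xi)$ and $B_{i,j} \in \Dee{0}{\xi_j + 2}(X)$ for an increasing sequence $(\xi_j)_{j < \cof(\xi)}$ cofinal in $\xi$ (if $\xi$ is limit). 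I then well-order the $\l \cdot \k = \l$ pieces as $((i_\gamma, j_\gamma))_{\gamma < \l}$, arranging matters so that each level $j$ is completely processed before the next one (i.e.\ $j_\delta < j_\gamma \Rightarrow \delta < \gamma$), and set
\[
B^*_{i_\gamma, j_\gamma} = B_{i_\gamma, j_\gamma} \setminus \bigcup_{\delta < \gamma} B_{i_\delta, j_\delta}, \qquad A^*_i = \bigcup_{j < \k} B^*_{i,j}.
\]
By construction the $A^*_i$ are pairwise disjoint subsets of the $A_i$ with $\bigcup_i A^*_i = \bigcup_i A_i$, so the remaining task is to check that $A^*_i \in \Sii{0}{\xi}(X)$ for every $i$.

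This check is clean when $\xi$ is a limit ordinal: the level-respecting enumeration makes $U_\gamma := \bigcup_{\delta < \gamma} B_{i_\delta, j_\delta}$ a $\l$-union of sets in $\Sii{0}{\xi_{j_\gamma} + 2}(X)$, so $U_\gamma \in \Sii{0}{\xi_{j_\gamma} + 2}(X)$ and consequently $B^*_{i_\gamma, j_\gamma} = B_{i_\gamma, j_\gamma} \cap (X \setminus U_\gamma) \in \Pii{0}{\xi_{j_\gamma} + 2}(X) \subseteq \Pii{0}{< \xi}(X)$; then $A^*_i$ is a union of $\cof(\xi) \leq \l$ sets in $\Pii{0}{<\xi}(X)$, which lies in $\Sii{0}{\xi}(X)$ by definition. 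The main obstacle will be the successor case (including $\xi = 1$), where naively $B^*_{i_\gamma, j_\gamma}$ lands only in $\Pii{0}{\xi}(X)$, which is not low enough. I plan to overcome this by starting instead from the finer decomposition $A_i = \bigcup_{j < \l} C_{i,j}$ with $C_{i,j} \in \Pii{0}{< \xi}(X)$ coming from the definition of $\Sii{0}{\xi}(X)$, and exploiting the closure properties of $\Dee{0}{\xi}(X)$ as a $\mu$-algebra together with Lemma~\ref{lem:reg}\ref{lem:reg3} to keep the relevant initial-segment unions inside $\Pii{0}{<\xi}(X)$, so that the differences $B^*_{i_\gamma, j_\gamma}$ remain in $\Pii{0}{<\xi}(X)$ and $A^*_i$ ends up in $\Sii{0}{\xi}(X)$. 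The delicate bookkeeping required when $\l$ is singular relies on the standing hypothesis $2^{<\l} = \l$, which in that case forces $\l$ to be strong limit and makes the cardinal arithmetic of the induction go through. Finally, the extra assumption $\dim(X) = 0$ in the case $\mu = \omega$, $\xi = 1$ is needed exclusively so that Lemma~\ref{lem:reg}\ref{lem:reg1} applies at the basic level.
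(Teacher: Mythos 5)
Your route is genuinely different from the paper's and, in its core cases, it works. The paper constructs the ordinal \( \l \)-uniformization directly (a least-witness selection along the G\"odel ordering, split into the cases \( \xi = 1 \), \( \xi \) successor, \( \xi \) limit) and then extracts the \( \l \)-reduction property from Proposition~\ref{prop:regularityproperties}\ref{prop:regularityproperties-3}; you invert this, proving \( \l \)-reduction by the classical disjointification argument (as in the proof of reduction for \( \boldsymbol{\Sigma}^0_\xi \) in the countable setting) and then deriving uniformization from the same equivalence. All the peripheral deductions --- separation for \( \Pii{0}{\xi} \), and the two negative statements via universal sets --- coincide with what the paper does, since the paper also funnels everything through Proposition~\ref{prop:regularityproperties}. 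Your treatment of the successor case \( \xi > 1 \) (passing to the decomposition into \( \Pii{0}{\xi'} \) pieces and using Lemma~\ref{lem:reg}\ref{lem:reg3} to keep the complements of the initial-segment unions inside \( \Dee{0}{\xi} \)) and of the limit case is correct.

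Two points need repair. First, the enumeration you describe cannot exist: a well-order of \( \l \times \k \) indexed by \( \gamma < \l \) in which each level is exhausted before the next begins is impossible once \( \k \geq 2 \), because every level already contains \( \l \) pieces. This is harmless but must be disentangled: the successor case needs order type exactly \( \l \) (so that every initial segment has size \( < \l \) and Lemma~\ref{lem:reg}\ref{lem:reg3} applies) but not the level-respecting property, whereas the limit case needs the level-respecting property but tolerates initial segments of size \( \l \) (closure of \( \Sii{0}{\xi_j+2} \) under \( \l \)-unions suffices there); so use a different well-order in each case. Second, and more substantially, your plan for \( \xi = 1 \) has a gap when \( \mu < \l \), i.e.\ when \( \l \) is singular: there \( \Dee{0}{1}(X) \) is only a \( \mu \)-algebra, the initial-segment unions have size up to \( \l > \mu \), and the ``finer decomposition coming from the definition of \( \Sii{0}{\xi} \)'' is vacuous at level \( 1 \), so neither of your two mechanisms yields that the differences remain open. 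The repair is exactly the device the paper uses in its own \( \xi = 1 \) case: the decomposition produced in the proof of Lemma~\ref{lem:reg}\ref{lem:reg1} consists of \( j \)-clopen sets (unions of sets \( \Nbhd_s \cap X \) with \( \leng(s) = j \)), and \( j \)-clopen sets are closed under \emph{arbitrary} unions, intersections and complements; with a level-respecting enumeration every set subtracted at a stage of level \( j \) is \( j \)-clopen, so the differences stay clopen and the resulting \( A^*_i \) are open. With these two adjustments (and quoting the \( \mu = \omega \) analogue of Lemma~\ref{lem:reg} from the literature, as the paper does) your argument goes through.
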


\begin{proof}
The case \( \mu = \omega \) has already been treated%
\footnote{Formally,~\cite[Proposition 4.4.4]{DMR} states the result just for \( \l \)-Polish spaces, i.e.\ \emph{completely} metrizable spaces of weight at most \( \l \). However, the proof goes trough also for the more general class \( \metr \).}
in~\cite[Proposition 4.4.4]{DMR}. We show that the same argument can be adapted to deal with the remaining cases, so from now on assume \( \mu > \omega \). 
By Proposition~\ref{prop:regularityproperties}, it is enough to show that $\Sii{0}{\xi}$ has the ordinal $\l$-uniformization property. Fix any \( X \in \metr \): since we assumed \( \mu > \omega \), by Theorem \ref{theorem:sikorski} we can suppose that \(X \subset \pre{\mu}{\l}\). We distinguish three cases.

First suppose that $\xi=1$. 
For any \(i<\mu\), we say that a set $A \subseteq X$ is $i$-clopen if there is $S \subseteq\pre{i}{\l}$ such that $A=\bigcup\left\{\Nbhd_s \cap X \mid s \in S\right\}$. It is easy to check that $i$-clopen sets are closed under complements, and arbitrary unions and intersections. Moreover, if $j \leq i$ then every $j$-clopen set is also $i$-clopen. 
Fix any $R  \in \Sii{0}{1}(X \times \l)$.
For every \(i<\mu\) and \(\g<\l\), let
\[
R_i^\gamma=\bigcup\left\{\Nbhd_s \cap X \mid s \in\pre{i}{\l} \wedge  \Nbhd_s \times\{\gamma\} \subseteq R\right\},
\]
and notice that \( \bigcup_{i < \mu} \bigcup_{\g < \l}  R_i^\g \) coincides with the projection of \( R \) on the first coordinate.
Each \(R_i^\gamma\) is  $i$-clopen, hence so is $R_i=\bigcup_{\g <\l } R^\g_i$. The set
\[
Q_i^\gamma = R^\g_i \setminus \left(\bigcup\nolimits_{i' < i} R_{i'} \cup \bigcup\nolimits_{\g' < \g} R^{\g'}_i \right)
\]
is $i$-clopen too, hence 
\[
\begin{aligned}
Q^* & =\left\{(x, \gamma, i) \in X \times \lambda \times \mu \mid x \in Q_i^\gamma\right\} \\
& =\bigcup_{i < \mu} \bigcup_{\gamma<\lambda} \left(Q_i^\gamma \times\{\gamma\} \times\{i\} \right) \in  \Sii{0}{1}(X \times \l \times \mu ).
\end{aligned}
\]
Finally, the set 
\[
R^* =\{ (x, \gamma) \in  X \times \lambda \mid \exists i < \mu \, [(x, \gamma, i) \in Q^*] \} 
\]
is open and uniformizes $R$.

Next we consider the case where $\xi=\xi'+1>1$ is a successor ordinal. 
Consider $R  \in \Sii{0}{\xi}(X \times \l)$, and write it as $R=\bigcup_{i <\l } R_i$ with $ R_i \in \Pii{0}{\xi'}(X \times \l)$. 
Let 
\[
Q=\left\{(x, \gamma, i) \in X \times \l \times \l \mid(x, \gamma) \in R_i\right\},
\]
and notice that $Q \in \Pii{0}{\xi'}(X \times \l \times \l )$ because $\Pii{0}{\xi'}$ is $\l$-reasonable. Endow $\l \times \l$ with the G\"odel well-ordering $\preceq$, and define 
\begin{align*}
 Q^* =\{ (x, \gamma, i) \in X \times \l \times \l \mid  (x, \gamma, i) \in Q \wedge \forall (\g',i') \prec (\g,i)  \left[\left(x, \gamma', i'\right) \notin Q\right]\}. 
\end{align*}
There are less than $\l$-many pairs $(\g',i') \prec (\g,i)$, and since the intersection of less than \( \l \)-many sets in \( \Sii{0}{\xi'}(X \times \l \times \l) \) is in \( \Dee{0}{\xi'+1}(X \times \l \times \l) = \Dee{0}{\xi}(X \times \l \times \l )\) by Lemma~\ref{lem:reg}\ref{lem:reg3}, we get that $Q^* \in  \Dee{0}{\xi}(X \times \l \times \l) \subseteq \Sii{0}{\xi}(X \times \l \times \l)$. 
Therefore the set 
\[
R^* =\{ (x, \gamma) \in  X \times \lambda \mid \exists i < \mu \, [(x, \gamma, i) \in Q^*] \} .
\]
belongs to $\Sii{0}{\xi}(X \times \l)$ and  uniformizes $R$. 

Finally, assume that $\xi$ is a limit ordinal, and let $R \in \Sii{0}{\xi}(X \times \l)$. 
By Lemma~\ref{lem:reg}\ref{lem:reg2}, there are an increasing sequence \( (\xi_i)_{i < \cof(\xi)} \) of ordinals smaller than \( \xi \) and sets \( R_i \in \Dee{0}{\xi_i}(X \times \l) \) such that $R=\bigcup_{i<\cof(\xi)} R_i$. 
For $i<\cof(\xi)$, let $Q^i \subseteq X \times \l \times \cof(\xi)$ be defined by
\[
Q^i=\left\{\left(x, \gamma, i'\right) \in X \times \l \times \cof(\xi) \mid i' \leq i  \wedge (x, \gamma) \in R_{i'}\right\},
\]
and set
\( Q=\bigcup_{i<\cof(\xi)} Q^i=\left\{(x, \gamma, i) \in X \times \l \times \l \mid(x, \gamma) \in R_i\right\} \).
Since each pointclass \(\Sii{0}{\xi_i} \) is \( \l \)-reasonable, then \( Q^i \in \Sii{0}{\xi_i}(X) \), and thus \( Q \in \Sii{0}{\xi}(X) \).
Let
\begin{align*}
 Q^* =\big\{ (x, \gamma, i) \in X \times \l \times \l \mid {} & (x, \gamma, i) \in Q \wedge \forall i'<i \, \forall \gamma'<\l \, [\left(x, \gamma', i'\right) \notin Q^i] \\ 
 & {} \wedge \forall \gamma'<\gamma \, \left[\left(x, \gamma', i\right) \notin Q^i\right] \big\} .
\end{align*}
For any fixed pair $(\gamma, i) \in \l \times \cof(\xi)$, the set
\[ 
\left\{ x \in X  \mid \forall i'<i \, \forall \gamma'<\l  \left[\left(x, \gamma', i'\right) \notin Q^i\right] \wedge \forall \gamma'<\gamma \,\left[\left(x, \gamma', i\right) \notin Q^i\right] \right\} 
\]
belongs to \( \Pii{0}{\xi_i}(X) \).
Since \( \Pii{0}{\xi_i}(X) \subseteq \Sii{0}{\xi}(X) \)
and \( \Sii{0}{\xi} \) is \( \l \)-reasonable, $Q^* \in \Sii{0}{\xi}(X \times \l \times \l)$. 
As before, it follows that the set $R^*$ consisting of those $(x, \gamma) \in X \times \l$ such that $(x, \gamma, i) \in Q^*$ for some $i<\cof(\xi)$ is the desired uniformization of $R$ in $\Sii{0}{\xi}(X \times \l )$.
\end{proof}

By Corollary~\ref{separation_partition}, we thus get:

\begin{corollary}\label{separation_partition2} 
Let \( X \in \metr \) and  \( 1 < \xi < \l^+ \). 
Let \( C \in \Dee{0}{\xi}(X) \), and let \( \{ P_0, \dotsc, P_n \} \subseteq \Pii{0}{\xi}(X) \) be a finite family of pairwise disjoint subsets of \( C \). Then there is a \( \Dee{0}{\xi} \)-partition \( \{ C_0, \dotsc, C_n \} \) of \( C \) such that \( P_i \subseteq C_i \) for every \( i \leq n \).

The same is true for \( \xi = 1 \) if either \( \mu > \omega \), or \( \mu = \o \) and we restrict the attention to the subclass of \( \metr \) consisting of all ultrametrizable spaces.
\end{corollary}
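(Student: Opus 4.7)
The plan is essentially a direct application of Corollary~\ref{separation_partition} to the boldface pointclass $\boldsymbol{\Gamma} = \Pii{0}{\xi}$, whose associated ambiguous class is precisely $\boldsymbol{\Delta}_{\boldsymbol{\Gamma}} = \Dee{0}{\xi}$. So the whole matter reduces to verifying the two hypotheses of that corollary and then transferring its conclusion verbatim.

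First, I need the separation property for $\Pii{0}{\xi}$. This is supplied by Theorem~\ref{22.16}, which gives even the stronger $\l$-separation property for $\Pii{0}{\xi}$. The precise range of spaces for which it applies is exactly the one propagated into the hypotheses of Corollary~\ref{separation_partition2}: arbitrary $X \in \metr$ when $\xi > 1$, and—when $\mu = \o$ and $\xi = 1$—only the ultrametrizable spaces of weight at most $\l$. Thus the restriction on $X$ carries through without modification.

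Second, I need $\Pii{0}{\xi}(X)$ to be closed under unions and intersections of size at most $\nu$ for some cardinal $\nu$ large enough to accommodate the finite family $\{P_0, \ldots, P_n\}$. Any finite $\nu > n$ suffices, and closure under \emph{finite} unions and intersections is trivial for every level of the $\l^+$-Borel hierarchy (it holds, in fact, already in the much stronger form recalled in Section~\ref{subsec:measurablefunctions}).

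With both hypotheses in place, Corollary~\ref{separation_partition} directly yields a $\Dee{0}{\xi}$-partition $\{C_0, \ldots, C_n\}$ of $C$ with $P_i \subseteq C_i$ for each $i \leq n$, which is the desired statement. I do not foresee any substantive obstacle: the proof is effectively a packaging step. The only minor care is in the case that some $P_i = \emptyset$; these impose no constraint, so one can simply apply the corollary to the subfamily of nonempty $P_i$'s and then, if needed, carve off a further clopen piece of the resulting $C_0$ (using $\mu$-additivity of $X$ and the fact that $\Dee{0}{1}(X) \subseteq \Dee{0}{\xi}(X)$) to supply nonempty $C_i$'s for the empty $P_i$'s—unless $C$ itself is too small, in which case the statement should be read with the natural caveat that the partition has as many pieces as the actual cardinality of $C$ allows.
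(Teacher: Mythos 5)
Your proposal is correct and follows essentially the same route as the paper, which derives this corollary in one line by applying Corollary~\ref{separation_partition} to \( \boldsymbol{\Gamma} = \Pii{0}{\xi} \) using the (\( \l \)-)separation property established in Theorem~\ref{22.16} together with the trivial closure of \( \Pii{0}{\xi}(X) \) under finite unions and intersections. Your extra remarks about empty \( P_i \)'s are a harmless refinement that the paper does not bother with.
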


\section{Generalized Borel functions as limits of continuous functions} \label{sec:Borelfunctions}

Let \( \boldsymbol{\Gamma}\) be a boldface pointclass.
 Let $X, Y \in \metr$,  and let \(\F\) be some set
of functions between X and Y. A function  \(f \colon X \to Y\) is \markdef{locally in \( \F \) on a \( \boldsymbol{\Gamma}\)-partition} \((A_\a)_{\a<\nu}\) of \( X \) if for each \( \a < \nu \) there is  \(f_\a \in \F \) such
that $f \restriction A_\a= f_\a \restriction A_\a$ for every $\a<\nu$. We will often consider functions which are locally constant on a \( \boldsymbol{\Gamma}\)-partition.

\begin{lemma} \label{lem:locallyconstant}
Let \( X , Y \in \metr \), and let \( \xi < \l^+ \) be a limit ordinal.
If \( f \colon X \to Y \) is locally constant on a finite \( \Dee{0}{\xi} \)-partition of \( X \), then 
\[ 
f \in \llim{\cof(\xi)} \M_{<\xi}(X,Y).
\]
\end{lemma}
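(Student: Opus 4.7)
Let $A_0, \dots, A_n$ be the finite $\Dee{0}{\xi}$-partition on which $f$ is locally constant, and let $y_i \in Y$ be the value taken by $f$ on $A_i$. Fix an increasing sequence $(\xi_\alpha)_{\alpha < \cof(\xi)}$ cofinal in $\xi$. The plan is to approximate $f$ by a $\cof(\xi)$-sequence $(f_\alpha)_{\alpha < \cof(\xi)}$ of ``simple'' functions, where $f_\alpha$ takes the correct values $y_i$ (for $i \neq 0$) on a better and better approximation of $A_i$ from inside, and defaults to $y_0$ elsewhere.

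Concretely, I would proceed as follows. For each $i \in \{1, \dots, n\}$, apply Lemma~\ref{lem:reg}\ref{lem:reg2} to the set $A_i \in \Sii{0}{\xi}(X)$ to write $A_i = \bigcup_{\alpha < \cof(\xi)} A_{i,\alpha}$, where $A_{i,\alpha} \in \Dee{0}{\xi_\alpha + 2}(X)$ and the sequence $(A_{i,\alpha})_{\alpha < \cof(\xi)}$ is increasing. Since the $A_i$'s are pairwise disjoint, for each fixed $\alpha$ the sets $A_{1,\alpha}, \dots, A_{n,\alpha}$ are pairwise disjoint as well; in particular, the function
\[
f_\alpha(x) = \begin{cases} y_i & \text{if } x \in A_{i,\alpha} \text{ for some } 1 \leq i \leq n, \\ y_0 & \text{otherwise,} \end{cases}
\]
is well-defined on $X$.

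Next I would verify that $f_\alpha \in \M_{<\xi}(X,Y)$. The range of $f_\alpha$ is contained in the finite set $\{y_0, \dots, y_n\}$, and for every $y \in Y$ the preimage $f_\alpha^{-1}(\{y\})$ is obtained as a Boolean combination of the finitely many sets $A_{1,\alpha}, \dots, A_{n,\alpha}$, all of which belong to $\Dee{0}{\xi_\alpha+2}(X)$. Hence $f_\alpha^{-1}(\{y\}) \in \Dee{0}{\xi_\alpha+2}(X)$, and for any open $U \subseteq Y$ the preimage $f_\alpha^{-1}(U) = \bigcup_{y \in U \cap \{y_0, \dots, y_n\}} f_\alpha^{-1}(\{y\})$ is a finite union of $\Dee{0}{\xi_\alpha+2}$ sets, thus again in $\Sii{0}{\xi_\alpha+2}(X)$. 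Since $\xi$ is a limit ordinal and $\xi_\alpha < \xi$, we get $\xi_\alpha+2 < \xi$, so $f_\alpha \in \M_{\xi_\alpha+2}(X,Y) \subseteq \M_{<\xi}(X,Y)$.

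Finally, I would check pointwise convergence $f = \lim_{\alpha < \cof(\xi)} f_\alpha$. For $x \in A_0$, no $A_{i,\alpha}$ with $i \neq 0$ contains $x$, so $f_\alpha(x) = y_0 = f(x)$ for every $\alpha$. For $x \in A_i$ with $i \neq 0$, the fact that $A_i = \bigcup_{\alpha < \cof(\xi)} A_{i,\alpha}$ together with monotonicity of the sequence $(A_{i,\alpha})_\alpha$ yields some $\alpha_0 < \cof(\xi)$ such that $x \in A_{i,\alpha}$ for every $\alpha \geq \alpha_0$; hence $f_\alpha(x) = y_i = f(x)$ eventually, and in particular $f_\alpha(x) \to f(x)$. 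This shows $f \in \llim{\cof(\xi)} \M_{<\xi}(X,Y)$. The only genuinely delicate point is the choice of the exponent $\xi_\alpha+2$ in the decomposition provided by Lemma~\ref{lem:reg}\ref{lem:reg2}, which is precisely what makes the approximating functions lie strictly below level $\xi$.
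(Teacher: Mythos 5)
Your proposal is correct and follows essentially the same route as the paper: decompose the partition pieces into increasing unions of lower-complexity sets via Lemma~\ref{lem:reg}\ref{lem:reg2}, define approximants that take the right value on the approximating sets and a default value elsewhere, and use monotonicity to get eventual (hence pointwise) convergence; the only cosmetic difference is that you use \( y_0 \) as the default value and leave \( A_0 \) undecomposed, while the paper decomposes all pieces and defaults to an arbitrary \( \bar y \in Y \). One small caveat: Lemma~\ref{lem:reg} is stated only for \( \mu > \omega \), so for the case \( \mu = \omega \) you should instead invoke the corresponding decomposition result from~\cite[Proposition 4.2.1]{DMR}, exactly as the paper does.
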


\begin{proof}
Let \(n \in \o\) and \((A_j)_{j \leq n}\) be a finite \(\Dee{0}{\xi}\)-partition of \(X\) such that \( f \) is constant with value \( y_j \) on each \( A_j \). 
Using Lemma~\ref{lem:reg} if \( \mu > \o \) or \cite[Proposition 4.2.1]{DMR} if \( \mu = \o \), 
we can find a sequence of ordinals \((\xi_i)_{i<\cof(\xi)}\) cofinal in \(\xi\) and sets \(B^j_i \in \Dee{0}{\xi_i}(X) \) such that \( A_j=\bigcup_{i<\cof(\xi)}B^j_i\) for every \( j \leq n \), and moreover \( B^j_i\subseteq B^j_{i'}\) for every \(i \leq i' < \cof(\xi) \). 
Fix \(\bar y \in Y \), and for \( i <\cof(\xi)\) let
\[
f_i(x)= \begin{cases}
y_j, &  \text{ if } x \in B^j_i \text{ for some } j \leq n, \\
\bar y , & \text{ otherwise}.
\end{cases}
\]
Notice that \( f \) is well-defined because \( B^j_i \subseteq A_j \) and \(A_j \cap A_{j'} = \emptyset \) for every \( j \neq j' \). 
Since \( X \setminus \bigcup_{j \leq n} B^j_i \in \Dee{0}{\xi_i}(X) \),
each \(f_i\) is constant on a finite \(\Dee{0}{\xi_i}\)-partition, and hence \(\Sii{0}{\xi_i}\)-measurable. 
It remains to show that \(f=\lim_{i<\cof(\xi)}f_i\). 
Given \(x \in X\), let \(j \leq n\) and \( i < \cof(\xi) \) be such that \(x \in B^j_i\). 
Since the sequence \((B^j_i)_{i<\cof(\xi)}\) is increasing, \( x \in B^j_{i'} \) for every \( i' \geq i \), and thus \( f_{i'}(x) = y_j = f(x) \).
\end{proof}

\begin{remark}\label{rem:partition} 
For later use (see Definition~\ref{def_hat_limit_definable}), we note that the family of functions \( (f_i)_{i < \cof(\xi)} \) from the proof of Lemma~\ref{lem:locallyconstant} satisfies a stronger form of convergence to \( f \). More precisely, setting \( X_i = \bigcup_{j \leq n} B^j_i \) we get a covering
\((X_i)_{i<\cof(\xi)}\) of $X$ such that for every \( i \leq i' < \cof(\xi) \):
\begin{itemizenew}
\item 
$X_i \in \Sii{0}{\xi}(X)$;
\item 
\(X_i \subseteq X_{i'}\);
\item 
$f_{i'}(x)=f(x)$ for every $x \in X_i$. 
\end{itemizenew}
\end{remark}

We now consider limits over the directed set \( \Fin_\l =  ([\lambda]^{<\aleph_0}, {\subseteq}) \) of finite subsets of \( \l \), ordered by inclusion.

\begin{proposition} \label{prop:finlimit}
Let \( X , Y \subseteq {}^\mu \lambda \), and 
fix an ordinal \( 1 \leq \xi < \lambda^+ \). Then every \( f \in \M_{\xi+1}(X,Y) \) can be written as
\[
f = \lim\nolimits_{d \in \Fin_\lambda} f_d,
\]
where each \( f_d \colon X \to Y \) is locally constant on a finite \( \Dee{0}{\xi} \)-partition of \( X \).
\end{proposition}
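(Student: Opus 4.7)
The plan is to combine the tree structure of \( Y \subseteq \pre{\mu}{\l} \) with the defining decomposition of \( \Sii{0}{\xi+1} \)-sets, refined one more level via Lemma~\ref{lem:reg} down to \( \Dee{0}{\xi} \). Enumerate the non-trivial nodes of the tree of \( Y \) as \( (s_\alpha)_{\alpha<\l} \) and set \( U_\alpha = \Nbhd_{s_\alpha} \cap Y \), so that \( \{U_\alpha\}_{\alpha<\l} \) is a clopen basis for \( Y \). Since \( f \in \M_{\xi+1}(X,Y) \), for every \( \alpha \) the set \( f^{-1}(U_\alpha) \) lies in \( \Dee{0}{\xi+1}(X) \) and in particular admits a decomposition \( f^{-1}(U_\alpha) = \bigcup_{\beta<\l} P^\alpha_\beta \) with \( P^\alpha_\beta \in \Pii{0}{\xi}(X) \). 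By the dual of Lemma~\ref{lem:reg}\ref{lem:reg1} if \( \xi \) is a successor, or of Lemma~\ref{lem:reg}\ref{lem:reg2} if \( \xi \) is a limit, each \( P^\alpha_\beta \) can be written as an intersection \( \bigcap_i D^{\alpha,\beta}_i \) of at most \( \l \)-many sets \( D^{\alpha,\beta}_i \in \Dee{0}{\xi}(X) \).

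Next, fix a bijection \( \l \cong \l \times \l \times \l \), so that each \( d \in \Fin_\l \) is read as a finite family of triples \( (\alpha,\beta,i) \). Let \( S_d \) denote the first-coordinate projection of \( d \), and for each \( \alpha \in S_d \) set
\[
A^\alpha_d = \bigcup_{\beta} \bigcap_{i} D^{\alpha,\beta}_i,
\]
where \( \beta,i \) range over the finitely many pairs with \( (\alpha,\beta,i) \in d \). Being a finite boolean combination of \( \Dee{0}{\xi} \)-sets, \( A^\alpha_d \) lies in \( \Dee{0}{\xi}(X) \). Let \( \E_d \) be the finite \( \Dee{0}{\xi} \)-partition of \( X \) into boolean atoms of \( \{A^\alpha_d : \alpha \in S_d\} \); each atom \( E \in \E_d \) is encoded by a signature \( \sigma \colon S_d \to 2 \) and corresponds to the clopen atom \( V^d_\sigma = \bigcap_{\sigma(\alpha)=1} U_\alpha \cap \bigcap_{\sigma(\alpha)=0}(Y\setminus U_\alpha) \) of \( Y \). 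Define \( f_d \) to be constantly equal on each atom to a chosen representative of \( V^d_\sigma \) (with a fixed default \( \bar y \in Y \) when \( V^d_\sigma = \emptyset \)). Then \( f_d \) is, by construction, locally constant on the finite \( \Dee{0}{\xi} \)-partition \( \E_d \).

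To verify \( f(x) = \lim_{d \in \Fin_\l} f_d(x) \), fix \( x \in X \) and a basic clopen neighborhood \( V = U_{\alpha_0} \) of \( f(x) \). Pick \( \beta_0 \) with \( x \in P^{\alpha_0}_{\beta_0} \), so that \( x \in D^{\alpha_0,\beta_0}_i \) for every \( i \). Build \( d_0 \) by collecting one triple \( (\alpha_0,\beta_0,i_0) \) --- which already forces \( x \in A^{\alpha_0}_d \) for every \( d \supseteq d_0 \) --- together with analogous witnesses ensuring that the signature of the atom containing \( x \) is aligned with the tree of \( Y \) along the branch of \( f(x) \). Then for \( d \supseteq d_0 \) the atom \( E \ni x \) satisfies \( \sigma(\alpha_0) = 1 \), \( V^d_\sigma \) is nonempty and contained in \( V \), hence \( f_d(x) \in V \).

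The hard part is exactly ensuring this ``signature consistency'' of the atom of \( x \) as \( d \) grows in \( \Fin_\l \): new indices appearing in \( d \) may introduce \( \alpha \in S_d \) or new \( \beta \in T^\alpha_d \) for which the membership of \( x \) in \( A^\alpha_d \) does not match \( f(x) \in U_\alpha \), potentially forcing \( V^d_\sigma = \emptyset \) and sending \( f_d(x) \) to the default. The cleanest remedy is to redesign \( f_d(x) \) by choosing the representative associated with the deepest \( \alpha^* \in S_d \) (in the tree order on \( Y \)) such that \( x \in A^{\alpha^*}_d \): the key point is that, in the \( \Fin_\l \)-limit, any \( \alpha \) with \( x \in A^\alpha_d \) must lie on the tree-branch of \( f(x) \), so such \( \alpha^* \) is forced to be tree-comparable to \( s_{\alpha_0} \) and its representative to lie inside \( V \). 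Making this work uniformly, especially when \( \leng(s_{\alpha_0}) \geq \omega \) (possible once \( \mu > \omega \), so \( s_{\alpha_0} \) has more than finitely many ancestors to be witnessed), is the technical core of the proof.
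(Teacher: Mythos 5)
Your construction has a genuine gap, and it sits exactly where you flag ``the technical core''. The sets \( A^\alpha_d = \bigcup_\beta \bigcap_i D^{\alpha,\beta}_i \), with the intersection taken only over the finitely many \( i \) mentioned in \( d \), are \emph{outer} approximations: since \( P^\alpha_\beta = \bigcap_{\text{all } i} D^{\alpha,\beta}_i \subseteq \bigcap_{i \in d} D^{\alpha,\beta}_i \), the set \( A^\alpha_d \) may contain points \( x \) with \( f(x) \notin U_\alpha \). In general there are many \( i \) with \( x \in D^{\alpha,\beta}_i \) even though \( x \notin P^\alpha_\beta \) (for instance all \( i \) below the first level at which \( x \) is excluded, when the \( D^{\alpha,\beta}_i \) are taken decreasing), so for any finite \( d_0 \) an adversarial \( d \supseteq d_0 \) can add a triple \( (\alpha,\beta,i) \) with \( s_\alpha \) incomparable to \( s_{\alpha_0} \) and \( x \in D^{\alpha,\beta}_i \), putting \( x \) spuriously into \( A^\alpha_d \). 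Your remedy --- take the deepest \( \alpha^* \) with \( x \in A^{\alpha^*}_d \) --- rests on the claim that any such \( \alpha \) must lie on the branch of \( f(x) \); that claim is false for individual \( d \) precisely because of these spurious memberships, and the chosen representative can then land in a basic clopen set disjoint from \( V \). So convergence of the net genuinely fails, and no finite \( d_0 \) can repair it.

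The missing ingredient is the separation property, not a further decomposition of the \( \Pii{0}{\xi} \)-sets into \( \Dee{0}{\xi} \)-pieces. The paper keeps the sets \( P^s_\alpha \in \Pii{0}{\xi}(X) \) intact and, for each finite \( d \), applies Corollary~\ref{separation_partition2} (a consequence of the \( \l \)-reduction property of \( \Sii{0}{\xi} \), Theorem~\ref{22.16}) recursively along the levels of the tree of \( Y \): the finitely many pairwise disjoint relevant \( \Pii{0}{\xi} \)-sets are enclosed in the cells of a finite \( \Dee{0}{\xi} \)-partition, with the partition at each deeper level refining the previous one inside each cell. The decisive feature is containment from above: \( P^t_\alpha \subseteq C^j_{t|_j} \) for every \( (t,\alpha) \in d \), so once \( (s,\alpha) \in d_0 \) and \( x \in P^s_\alpha \), the cell of \( x \) is forced to be labelled by some \( t \supseteq s \) for \emph{every} \( d \supseteq d_0 \), no matter which further pairs are added. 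This monotone guarantee is exactly what your atoms-of-\( A^\alpha_d \) construction cannot provide.
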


\begin{proof}
Let \( \tr_Y \) be the tree of \( Y \). For  each \( s \in \tr_Y \), let \( \Nbhd_s(Y) = \Nbhd_s \cap Y \) and fix any \( y_s \in \Nbhd_s(Y) \). Let \( \{ P^s_\alpha \mid \alpha < \lambda \} \) be a family of nonempty \( \Pii{0}{\xi}(X) \)-sets such that \( f^{-1}(\Nbhd_s(Y)) = \bigcup_{\alpha < \lambda} P^s_\alpha \).

Since \( |\tr_Y| \leq \lambda \), we can clearly work with the directed set \( D = ([\tr_Y \times \lambda]^{< \aleph_0},{\subseteq}) \) instead of \( \Fin_\lambda \). Fix a nonempty \( d \in D \). Let \( S_d = \{ s \in \tr_Y \mid \exists \alpha < \lambda \, [(s,\alpha) \in d] \} \), and let
\( \beta_0, \dotsc, \beta_k \) enumerate the set \( \{ \leng (s) \mid s \in S_d \}  \) in increasing order, for the appropriate \( k \in \omega \). To simplify the notation, for \( s \in \tr_Y \) we let \( \lev (s) = j \) if and only if \( \leng(s) = \beta_j \), and for every \( i \leq k \) we let
\[	
s|_i = 
\begin{cases}
s \restriction \beta_i & \text{if } \leng(s) \geq \beta_i \\
s & \text{otherwise}.
\end{cases}
\]
Finally, for any \( j \leq k \) we let \( S_d^j = S_d^{j,0} \cup S_d^{j,1} \), where
\begin{align*}
S_d^{j,0}  & = \{ s|_j \mid s \in S_d \wedge \lev(s) \geq j \}  \qquad \text{and} \\
S_d^{j,1}  & = \{ s \in S_d \mid \lev(s) < j \wedge s \text{ maximal in } S_d \}.
\end{align*}
Clearly, \( S^j_d \) consists of pairwise incomparable sequences because of the maximality requirement in the definition of \( S_d^{j,1} \). Moreover, \( S_d^j \) is finite because so is \( d \). 
Notice also that if \( s \in S^j_d \), then \( s|_i \in S^i_d \) for every \( i \leq j \).
Finally, by definition \( S^k_d \) coincides with the set of all maximal elements of \( S_d \).

We build a collection of finite \( \Dee{0}{\xi} \)-partitions \( \mathcal{C}_j = \{ C^j_s \mid s \in S^j_d \} \) of \( X \) satisfying the following two conditions:
\begin{enumerate-(1)}
\item \label{cond1finlimit}
For every \( s \in S^{j,0}_d \), \( P^t_\alpha \subseteq C^j_s \) for every \( (t,\alpha) \in d \) with \( t \supseteq s \).
\item \label{cond2finlimit}
For every \( 0 < j \leq k \) and \( s \in S^j_d \), \( C^j_s \subseteq C^{j-1}_{s|_{j-1}} \).
\end{enumerate-(1)}
In particular, \( \mathcal{C}_j \) refines \( \mathcal{C}_{j-1} \).
Notice that condition~\ref{cond2finlimit} is equivalent to: \( C^j_s \subseteq C^i_{s|_i} \) for every \( i \leq j \leq k \) and \( s \in S^j_d \).

The construction is by recursion on \( j \leq k \). If \( j = 0 \), then \( S^j_d = S^{j,0}_d = \{ s |_ 0 \mid s \in S_d \} \). For every \( s \in S^{j,0}_d \), let \( P_s = \bigcup \{ P^t_\alpha \mid (t,\alpha) \in d \wedge  t \supseteq s \} \). The finite family \( \{ P_s \mid s \in S^{j,0}_d \} \) consists of pairwise disjoint \( \Pii{0}{\xi} \)-sets because \( P^t_\alpha \subseteq f^{-1}(\Nbhd_t(Y)) \subseteq f^{-1}(\Nbhd_{t|_0}(Y)) \) and \( \Pii{0}{\xi}(X) \) is closed under finite unions. Using Corollary~\ref{separation_partition2}, let \( \mathcal{C}_0 = \{ C^0_s \mid s \in S^{j,0}_d = S^j_d \} \) be any (finite) \( \Dee{0}{\xi} \)-partition of \( X \) separating the sets \( P_s \) from each other. It is clear that~\ref{cond1finlimit} holds by construction, while~\ref{cond2finlimit} needs not to be checked in this case. 

Assume now that \( j > 0 \), and that \( \mathcal{C}_{j-1} \) has already been defined. Fix any \( s \in S^j_d \). We distinguish two cases. If \( s \in S^{j,1}_d \), then \( s = s|_{j-1} \in S^{j-1}_d \) and we can set \( C^j_s = C^{j-1}_s \). With this choice, \ref{cond2finlimit} is trivially satisfied. The remaining case is when \( s \in S^{j,0}_d \).
Let \( \hat{S}^j_d = \{ s|_{j-1} \mid s \in S^{j,0}_d \} \), and notice that \( \hat{S}^j_d \subseteq S^{j-1,0}_d \subseteq S^{j-1}_d \). For each \( \hat{s} \in \hat{S}^j_d \), we repeat the argument from the basic case \( j = 0 \) but working within \( C^{j-1}_{\hat{s}} \) and considering only those \( s \in S^{j,0}_d \) such that \( s|_{j-1} = \hat{s} \). More precisely, for each such \( s \) let \( P_s = \bigcup \{ P^t_\alpha \mid (t,\alpha) \in d \wedge t \supseteq s \} \). By~\ref{cond1finlimit} applied to \( \hat{s} \), we get \( P_s \subseteq C^{j-1}_{\hat{s}} \). Moreover, the sets \( P_s \) are pairwise disjoint and they belong to \( \Pii{0}{\xi}(X) \). So by Corollary~\ref{separation_partition2} we can find a \( \Dee{0}{\xi} \)-partition \( \{ C^j_s \mid s \in S^{j,0}_d, s|_{j-1} = \hat{s} \} \) of \( C^{j-1}_{\hat{s}}\) separating the sets \( P_s \) from each other. It is clear that both~\ref{cond1finlimit} and~\ref{cond2finlimit} are satisfied by construction.

For each \( d \in D \), let \( f_d \colon X \to Y \) be the unique function which is locally constant on the finite \( \Dee{0}{\xi} \)-partition \( \mathcal{C}_k \) and assumes value \( y_s \) on each \( C^k_s \in \mathcal{C}_k \). 

\begin{claim} \label{claim:finlimit}
For every \( (s,\alpha) \in d \) and \( x \in P^s_\alpha \) there is \(  t \in \tr_Y \) such that \( t \supseteq s \) and \( f_d(x) = y_t \).
\end{claim}

\begin{proof}[Proof of the Claim]
Let \( t \) be the unique sequence in \( S^k_d \) such that \( x \in C^k_t \), so that \( f_d(x) = y_t \). Let  \( j = \min \{  \lev(s), \lev(t) \} \). Notice that  \( x \in P^s_\a \subseteq C^j_{s|_j} \) by~\ref{cond1finlimit}. Also, \( x \in C^j_{t|_j} \) by~\ref{cond2finlimit}. Therefore \( s|_j = t|_j \) because \( C^j_{s|_j} \cap C^j_{t|_j} \neq \emptyset \). It follows that \( s \) and \( t \) are compatible, and indeed \( s \subseteq t \) because \( t \), being an element of \( S^k_d \), is maximal in \( S_d \). 
\end{proof}

To conclude the proof, we just need to show that \( \lim_{d \in D} f_d = f \). Fix \( x \in X \) and any \( s \in \tr_Y \) such that \( f(x) \in \Nbhd_s(Y) \). Let \( \alpha < \lambda \) be such that \( x \in P^s_\alpha \), and set \( d = \{ (s,\alpha) \} \). Then Claim~\ref{claim:finlimit} entails that for every \( d' \supseteq d \) there is \( t \supseteq s \) such that \( f_{d'}(x) = y_t \in \Nbhd_t(Y) \), and since \( \Nbhd_t(Y) \subseteq \Nbhd_s(Y) \) we are done. 
\end{proof}

\begin{corollary} \label{cor:doublelimit}
Let \( X , Y \in \metr \), and further assume that \( \dim(X) = \dim(Y) = 0 \) if \( \mu = \o \). 
For every limit ordinal \(  \xi < \l^+ \),
\[
\M_{\xi+1}(X,Y)  \subseteq  \llim{\Fin_\l} \left( \llim{\cof(\xi)} \M_{<\xi}(X,Y) \right).
\]
\end{corollary}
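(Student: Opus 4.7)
The plan is to combine Proposition~\ref{prop:finlimit} with Lemma~\ref{lem:locallyconstant} in a straightforward two-step decomposition: first express $f$ as a $\Fin_\l$-limit of functions that are locally constant on finite $\Dee{0}{\xi}$-partitions, and then express each such piece as a $\cof(\xi)$-limit of functions from $\M_{<\xi}(X,Y)$.

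First I would reduce to the situation where $X,Y$ are actually subsets of $\pre{\mu}{\l}$, so that Proposition~\ref{prop:finlimit} can be applied. If $\mu > \o$ this is immediate from Theorem~\ref{theorem:sikorski}; if $\mu = \o$, the extra assumption $\dim(X) = \dim(Y) = 0$ is exactly what Theorem~\ref{theorem:sikorski2} requires in order to embed $X$ and $Y$ into $\pre{\o}{\l}$. Since $\l^+$-Borel measurability and the $\Sii{0}{\xi}$-measurability classes are preserved under the embeddings (and the pointwise limit notions transfer as well), there is no loss in identifying $X$ and $Y$ with their images.

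Given $f \in \M_{\xi+1}(X,Y)$, Proposition~\ref{prop:finlimit} yields functions $f_d \colon X \to Y$ for $d \in \Fin_\l$, each locally constant on some finite $\Dee{0}{\xi}$-partition of $X$, with $f = \lim_{d \in \Fin_\l} f_d$. Applying Lemma~\ref{lem:locallyconstant} to each $f_d$ individually (this is where the assumption that $\xi$ is a \emph{limit} ordinal enters, so that $\cof(\xi)$ makes sense and the lemma is applicable), we obtain for each $d$ a family $(f_{d,i})_{i < \cof(\xi)}$ of functions in $\M_{<\xi}(X,Y)$ with $f_d = \lim_{i < \cof(\xi)} f_{d,i}$. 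Substituting these expressions into $f = \lim_{d \in \Fin_\l} f_d$ gives
\[
f = \lim_{d \in \Fin_\l} \lim_{i < \cof(\xi)} f_{d,i},
\]
which by definition witnesses $f \in \llim{\Fin_\l}\bigl(\llim{\cof(\xi)} \M_{<\xi}(X,Y)\bigr)$.

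There is essentially no obstacle here since both ingredients are already in place; the only thing to be mildly careful about is the initial reduction to $X, Y \subseteq \pre{\mu}{\l}$, which is why the statement needs the zero-dimensionality hypothesis in the classical case $\mu = \o$ (when $\mu > \o$, Theorem~\ref{theorem:sikorski} handles this automatically for every $X, Y \in \metr$).
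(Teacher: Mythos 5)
Your proposal is correct and follows exactly the paper's argument: reduce to $X,Y \subseteq \pre{\mu}{\l}$ via Theorems~\ref{theorem:sikorski} and~\ref{theorem:sikorski2}, then compose Proposition~\ref{prop:finlimit} with Lemma~\ref{lem:locallyconstant}. Nothing further is needed.
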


\begin{proof}
By Theorems~\ref{theorem:sikorski} and~\ref{theorem:sikorski2}, we can assume that \( X,Y \subseteq \pre{\mu}{\l} \). 
Therefore it is enough to combine Proposition~\ref{prop:finlimit} with Lemma~\ref{lem:locallyconstant}.
\end{proof}

Given a family \( \D \) of directed sets and a collection of functions \( \F \) between topological spaces \( X \) and \( Y \), we say that \( \F \) is closed under \( \D \)-limits if for every \( D \in \D \) and every family \( (f_d)_{d \in D} \) of functions from \( \F \) we have \( \lim_{d \in D} f_d \in \F \) (whenever such limit exists). 
The \markdef{\( \D \)-closure} of \( \F \) is the smallest collection of functions which contains \( \F \) and is closed under \( \D \)-limits.

We consider the following families of directed sets:
\begin{itemizenew}
\item 
\(\D_0= \{\k\leq \l \mid \k \text{ regular} \} \cup \{ \Fin_\l \} \) 
\item \label{D_infi}
\(\D_\l=\{ D  \mid |D| \leq \l  \}\) 
\end{itemizenew}
Clearly, \( \D_0 \subsetneq \D_\l \).

\begin{theorem}\label{teo:D_limits}
Let $X, Y \in \metr$, and assume that \( \dim(X) = 0 \) if \( \mu = \o \). 
For every function \(f \colon X \to Y\), the following are equivalent:
\begin{enumerate-(1)}
\item\label{teo:D_limits1} 
\(f\) is \( \l^+ \)-Borel measurable;
\item\label{teo:D_limits2} 
\(f\) is in the \( \D_0 \)-closure of the collection of all continuous functions;
\item\label{teo:D_limits4} 
\(f\) is in the \( \D_\l \)-closure of the collection of all continuous functions.
\end{enumerate-(1)}
\end{theorem}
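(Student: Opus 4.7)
The plan is to prove the cycle \ref{teo:D_limits2} $\Rightarrow$ \ref{teo:D_limits4} $\Rightarrow$ \ref{teo:D_limits1} $\Rightarrow$ \ref{teo:D_limits2}. The first implication is immediate from $\D_0 \subseteq \D_\l$. For \ref{teo:D_limits4} $\Rightarrow$ \ref{teo:D_limits1}, by the minimality of the $\D_\l$-closure it suffices to show that the class of $\l^+$-Borel measurable functions from $X$ to $Y$ already contains all continuous functions and is closed under pointwise $D$-limits for every directed $D$ with $|D| \leq \l$. To this end, I would fix a basis $\B = \{W_i \mid i < \l\}$ of $Y$ and, given $f = \lim_{d \in D} f_d$ with each $f_d$ being $\l^+$-Borel, use regularity of $Y$ to verify that, for every open $U \subseteq Y$,
\[
f^{-1}(U) = \bigcup_{\substack{W \in \B \\ \overline{W} \subseteq U}} \ \bigcup_{d \in D}\ \bigcap_{d' \geq d} f_{d'}^{-1}(W).
\]
Indeed, if $f(x) \in U$ then regularity supplies $W \in \B$ with $f(x) \in W \subseteq \overline{W} \subseteq U$, whence $f_d(x) \in W$ eventually; conversely, if $f_d(x) \in W$ eventually, then $f(x) \in \overline{W} \subseteq U$. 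Since $|\B|, |D| \leq \l$, the right-hand side is manifestly $\l^+$-Borel.

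For \ref{teo:D_limits1} $\Rightarrow$ \ref{teo:D_limits2}, I would use that $w(Y) \leq \l$ forces every $\l^+$-Borel function into $\M_\xi(X,Y)$ for some $\xi < \l^+$, and then prove by transfinite induction on $1 \leq \xi < \l^+$ that $\M_\xi(X,Y)$ lies in the $\D_0$-closure of the class of continuous functions. The base case $\xi = 1$ is trivial. For a successor $\xi = \xi' + 1$, Proposition~\ref{prop:finlimit} applied at level $\xi'$ presents each $f \in \M_\xi(X,Y)$ as a pointwise $\Fin_\l$-limit of functions locally constant on a finite $\Dee{0}{\xi'}$-partition; such functions are $\Dee{0}{\xi'}$-measurable, hence belong to $\M_{\xi'}(X,Y)$, which by inductive hypothesis is contained in the $\D_0$-closure, and $\Fin_\l \in \D_0$ closes the step. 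For a limit $\xi$, I would use $\M_\xi \subseteq \M_{\xi+1}$ together with Corollary~\ref{cor:doublelimit}: the latter expresses every element of $\M_{\xi+1}(X,Y)$ as a pointwise $\Fin_\l$-limit of pointwise $\cof(\xi)$-limits of functions in $\M_{<\xi}(X,Y)$. Since $\cof(\xi)$ is a regular cardinal not exceeding $\l$ and $\Fin_\l \in \D_0$, both limits are permitted in the $\D_0$-closure, and $\M_{<\xi}(X,Y)$ lies in the closure by inductive hypothesis.

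The main subtlety lies at the limit step: there is no direct analogue of Proposition~\ref{prop:finlimit} for $\M_\xi$ itself when $\xi$ is a limit ordinal, which is precisely why one must pass through $\M_{\xi+1}$ and pay the price of a double limit; this is what forces $\D_0$ to include all regular cardinals up to $\l$ alongside $\Fin_\l$. A secondary obstacle arises when $\mu = \omega$ and $Y$ is not assumed zero-dimensional: Proposition~\ref{prop:finlimit} and Corollary~\ref{cor:doublelimit} presuppose $Y \subseteq \pre{\mu}{\l}$, which via Theorem~\ref{theorem:sikorski2} would impose $\dim(Y) = 0$. For $\mu > \omega$, Theorem~\ref{theorem:sikorski} supplies the required embeddings of both $X$ and $Y$ unconditionally, so the induction above applies verbatim; for $\mu = \omega$, the corresponding statement assuming only $\dim(X) = 0$ can be quoted from~\cite{DMR} to close the remaining case.
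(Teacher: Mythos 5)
Your overall architecture coincides with the paper's: the implication \ref{teo:D_limits2} \( \Rightarrow \) \ref{teo:D_limits4} is trivial, your net-limit computation for \ref{teo:D_limits4} \( \Rightarrow \) \ref{teo:D_limits1} is correct (and in fact slightly tidier than the paper's, which splits into a clopen-set computation for \( \mu > \o \) and a countable-covering computation for \( \mu = \o \); your single identity over basic \( W \) with \( \overline{W} \subseteq U \) handles both cases at once), and your induction for \ref{teo:D_limits1} \( \Rightarrow \) \ref{teo:D_limits2} --- Proposition~\ref{prop:finlimit} at successors, Corollary~\ref{cor:doublelimit} at limits, with \( \Fin_\l \) and the regular cardinals \( \cof(\xi) \) all lying in \( \D_0 \) --- is exactly the paper's argument, up to a harmless reindexing of the induction statement.

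The gap is in your last sentence. When \( \mu = \o \) and \( \dim(Y) \neq 0 \) (a case the theorem must cover, since only \( \dim(X)=0 \) is assumed), \( Y \) cannot be embedded into \( \pre{\o}{\l} \), so Proposition~\ref{prop:finlimit} and Corollary~\ref{cor:doublelimit} are unavailable, and you dispose of this by asserting that "the corresponding statement can be quoted from~\cite{DMR}". Nothing in the paper supports this: what it imports from~\cite{DMR} is only a change-of-topology lemma (Corollary 4.3.6 there), not the implication itself. The paper's actual argument is a two-step reduction you are missing: (a) a Claim that any \( f \) locally constant on a finite \( \Bor \)-partition lies in the \( \D_0 \)-closure, proved by replacing \( Y \) with the finite discrete space \( Z = f(X) \) (to which the zero-dimensional case already applies) and noting \( \M_1(X,Z) \subseteq \M_1(X,Y) \); and (b) for arbitrary \( \l^+ \)-Borel \( f \), refining the topology \( \tau \) of \( Y \) to a zero-dimensional \( \tau' \supseteq \tau \) with the same \( \l^+ \)-Borel sets (this is where \cite{DMR} enters), applying Proposition~\ref{prop:finlimit} in \( (Y,\tau') \) to write \( f \) as a \( \Fin_\l \)-limit of functions locally constant on finite \( \Bor \)-partitions, and observing that since \( \tau \subseteq \tau' \) the convergence persists in the original topology, so that (a) finishes the proof. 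Without something equivalent to (a) and (b), your proof does not cover the hypotheses of the theorem as stated.
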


Clearly, in Theorem~\ref{teo:D_limits} we can replace \( \D_0 \) and \( \D_\l \) with any intermediate \( \D_0 \subseteq \D \subseteq \D_\l \).

\begin{proof}
\ref{teo:D_limits1} \(\Rightarrow \) \ref{teo:D_limits2} 
First assume that either \( \mu > \o \), or else \( \mu = \o \) and \( \dim(Y) = 0 \). Then we may assume, without loss of generality, that \( X,Y \subseteq \pre{\mu}{\l}\) by Theorems~\ref{theorem:sikorski} and~\ref{theorem:sikorski2}. We show by induction on \( \xi < \l^+ \) that \( \M_{\xi+1}(X,Y) \) is contained in the \( \D_0 \)-closure of \( \M_1(X,Y) \). The basic case \( \xi = 0 \) is trivial, so assume that $\xi>0$ and fix any \( f \in \M_{\xi+1}(X,Y) \).  
If \(\xi\) is a limit ordinal, then since \( \M_{<\xi}(X,Y) = \bigcup_{\xi' < \xi} \M_{\xi'+1}(X,Y)\) and \( \cof(\xi) \in \D_0 \) (because \( \cof(\xi) \leq |\xi| \leq \lambda \) is regular), it is enough to use Corollary~\ref{cor:doublelimit} and the inductive hypothesis. 
If instead \(\xi = \xi'+1 \) is a successor ordinal, then we use Proposition~\ref{prop:finlimit} and the inductive hypothesis applied to \( \xi' \): this works because if a function is locally constant on a finite \( \Dee{0}{\xi} \)-partition of \( X \), then it is trivially \( \Sii{0}{\xi'+1} \)-measurable. 

It remains to consider the case where \( \mu = \o \) but \( \dim(Y) \neq 0 \). First we prove the result for all \( \l^+ \)-Borel measurable functions with finite range, which are precisely the functions which are locally constant on a finite \( \Bor \)-partition.

\begin{claim} \label{claim:finiterange}
If \( f \colon X \to Y \) is locally constant on a finite \( \Bor \)-partition, then \( f \) is in the \( \D_0 \)-closure of \( \M_1(X,Y) \).
\end{claim}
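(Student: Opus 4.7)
My plan is to reduce the claim to the case $\dim(Y)=0$ already handled in the preceding paragraph, by restricting the codomain of $f$ to its (finite) range, which automatically carries a zero-dimensional topology.

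First I would fix a finite $\Bor$-partition $\{A_0,\dotsc,A_n\}$ of $X$ on which $f$ is constant with values $y_0,\dotsc,y_n$, and let $F=\{y_0,\dotsc,y_n\}$ denote the range of $f$. Since $F$ is finite and $Y$ is Hausdorff, $F$ is closed in $Y$, and with the subspace topology $F$ is discrete; in particular $F\in\metr$ with $\dim(F)=0$, and the same partition witnesses that $f$, now viewed as a map $X\to F$, is $\l^+$-Borel measurable.

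Next I would invoke the implication \ref{teo:D_limits1}$\Rightarrow$\ref{teo:D_limits2} already established in the previous paragraph for a zero-dimensional codomain, applied to $f\colon X\to F$. This yields a construction of $f$ from continuous functions $X\to F$ by iteratively taking pointwise $D$-limits with $D\in\D_0$. Since $F\subseteq Y$ carries the subspace topology, every continuous $X\to F$ function is a continuous $X\to Y$ function via inclusion, and pointwise $D$-convergence of an $F$-valued net in the topology of $F$ coincides with pointwise $D$-convergence in the topology of $Y$; moreover, since $F$ is closed in $Y$, any pointwise limit in $Y$ of a convergent $F$-valued net automatically lies in $F$. A straightforward induction on the rank of the $\D_0$-construction then lifts the witnessing tower of limits for $f\colon X\to F$ to a witnessing tower for $f\colon X\to Y$, starting from continuous $X\to Y$ functions.

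I do not anticipate a serious obstacle: the only substantive observation is that finiteness of the range trades the problematic hypothesis $\dim(Y)=0$ for the trivially available $\dim(F)=0$, and everything else amounts to routine topological bookkeeping around subspace convergence and pointwise limits.
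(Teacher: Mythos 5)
Your proposal is correct and is essentially the paper's own argument: both restrict the codomain to the finite (hence discrete, zero-dimensional) range $Z=f(X)$, apply the already-established zero-dimensional case to $f\colon X\to Z$, and then observe that the $\D_0$-closure computed with target $Z$ coincides with the one computed with target $Y$ because $Z$ is finite and carries the subspace topology. No gaps.
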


\begin{proof}[Proof of the Claim]
Let \( Z = f(X)\).  When construed as a function from \( X \) to \( Z \), the map \( f \) is still \( \l^+ \)-Borel measurable. Since \( Z \) is finite, and hence discrete, then \( Z \in \metr \) and  \( \dim(Z) = 0 \), therefore we already know from what we proved above that \( f \) is in the \( \D_0 \)-closure of \( \M_1(X,Z) \) (as computed among functions from \( X \) to \( Z \)). But since \( Z \) is finite, the latter coincides with the \( \D_0 \)-closure of \( \M_1(X,Z) \) when viewed as a collection of functions from \( X \) to \( Y \): since \( \M_1(X,Z) \subseteq \M_1(X,Y) \), we are done.
\end{proof}

Consider now an arbitrary \( \l^+ \)-Borel measurable function \( f \colon X \to Y \).
Let \( \tau \) be the topology of \( Y \). By~\cite[Corollary 4.3.6]{DMR}, there is a topology \( \tau' \supseteq \tau \) on \( Y \) such that \( Y' = (Y,\tau') \in \M_\l \), \( \dim(Y') = 0 \), and \( \Bor(Y') = \Bor(Y) \). It follows that \( f \), viewed as a function from \( X \) to \( Y' \), is still \( \l^+ \)-Borel measurable, and thus it is \( \Sii{0}{\xi+1} \)-measurable for some \( \xi < \l^+  \). 
Since both \( X \) and \( Y' \) can now be construed as subspaces of \( \pre{\mu}{\l} \) by Theorem~\ref{theorem:sikorski2}, 
we can apply Proposition~\ref{prop:finlimit} and get a family of functions \( f_d \colon X \to Y '\) which are locally constant on a finite \( \Bor \)-partition and such that \( f = \lim_{d \in \Fin_\l} f_d \), where the limit is computed with respect to \( Y' \). Since \( \tau' \supseteq \tau \), we still have \( f = \lim_{d \in \Fin_\l} f_d \) if the limit is computed with respect to \( Y \), and clearly the functions \( f_d \) remain locally constant on the same finite \( \Bor \)-partition if we step back from \( Y' \) to \( Y \). Therefore we are done by Claim~\ref{claim:finiterange}.

\ref{teo:D_limits2} $\Rightarrow$ \ref{teo:D_limits4}  
Trivial, as \( \D_0 \subseteq \D_\l \).

\ref{teo:D_limits4} $\Rightarrow$ \ref{teo:D_limits1}
Let \(f=\lim_{d \in D}f_d\) with \((f_d)_{d \in D}\) a family of \( \l^+ \)-Borel measurable functions.  
Assume first that \( \mu > \o \), so that \( Y \) is zero-dimensional.
Then for every clopen \(U \subseteq Y\), 
\begin{equation} \label{eq:preimage1}
f^{-1}(U)=\bigcup_{d \in D} \bigcap_{d' \geq d }f^{-1}_{d'}(U).
\end{equation}
Since \(|D|\leq \l\) and \(f^{-1}_{d'}(U) \in  \Bor(X)\), this proves that \(f\) is \( \l^+ \)-Borel measurable too. 
If instead \( \mu = \o \), given any open set \( U \subseteq Y \) we consider an open covering \( \{ U_n \mid n \in \o  \} \) of \( U \) such that \( \cl(U_n) \subseteq U \) for every \( n \in \omega \). Then 
\begin{equation} \label{eq:preimage2}
f^{-1}(U)=\bigcup_{n \in \o} \bigcup_{d \in D} \bigcap_{d' \geq d}f^{-1}_{d'}(\cl(U_n)) , 
\end{equation}
hence \( f^{-1}(U)  \in \Bor(X) \) again.
\end{proof}

One might wonder whether the class \( \D_0 \) can be further reduced, still getting an analogue of Theorem~\ref{teo:D_limits}. For example, in the classical setting, which would correspond to the case \( \l = \omega \), it is enough to consider \( \o \)-limits (Theorem~\ref{thm:closureunderlimits-classical}). This is no longer true in the uncountable setup. 
For example, the following proposition implies that if \( \l \) has uncountable cofinality, then \( \M_{<\o}(X,Y) \), which is a proper subclass of all \( \l^+ \)-Borel measurable functions if \( X \) and \( Y \) are large enough, is already closed under \( \l \)-limits, and thus it contains the \( \D \)-closure of \( \M_1(X,Y) \) for \( \D = \{ \l \} \). (See also Corollary~\ref{cor:measurable_limits}.)

\begin{proposition}\label{prop:long_limits_counterex} 
Let \( X,Y \in \metr \). Let \( \k \leq \l \)
and \( \xi < \l^+  \) be such that \( \xi \) is limit and \( \cof(\xi) < \cof(\k) \). Then \(  \M_{<\xi}(X,Y) \) is closed under \( \kappa \)-limits.
\end{proposition}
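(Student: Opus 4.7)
The plan is to first use the cofinality hypothesis $\cof(\xi) < \cof(\k)$ to pass to a cofinal subsequence of uniformly bounded measurability level, and then run a preimage computation analogous to the one used in the proof of \ref{teo:D_limits4} $\Rightarrow$ \ref{teo:D_limits1} of Theorem~\ref{teo:D_limits}.

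Writing $f = \lim_{\a < \k} f_\a$ with $f_\a \in \M_{\xi_\a}(X,Y)$ for some $\xi_\a < \xi$, I would fix a strictly increasing sequence $(\eta_i)_{i < \cof(\xi)}$ cofinal in $\xi$ and set $A_i = \{ \a < \k \mid \xi_\a < \eta_i \}$, an increasing cover of $\k$ of length $\cof(\xi)$. If every $A_i$ were bounded in $\k$, then $(\sup A_i)_{i < \cof(\xi)}$ would be cofinal in $\k$, yielding $\cof(\k) \leq \cof(\xi)$ and contradicting the hypothesis. So some $A_{i^*}$ is unbounded in $\k$; setting $\xi' = \eta_{i^*} < \xi$ and $A = A_{i^*}$, every $f_\a$ with $\a \in A$ lies in $\M_{\xi'}(X,Y)$, and since $A$ is cofinal in $\k$ the subnet $(f_\a)_{\a \in A}$ still converges pointwise to $f$.

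Next, I would exploit $\mu$-metrizability of $Y$ to write an arbitrary open $U \subseteq Y$ as $U = \bigcup_{i < \mu} V_i$ with $V_i$ open and $\cl(V_i) \subseteq U$: one may take the $V_i$ clopen when $\mu > \omega$ (by Theorem~\ref{theorem:sikorski}), while the standard sets $V_i = \{ y \in Y \mid d(y, Y \setminus U) > 1/i \}$ do the job when $\mu = \omega$. The same argument used in the proof of \ref{teo:D_limits4} $\Rightarrow$ \ref{teo:D_limits1} of Theorem~\ref{teo:D_limits} then yields
\[
f^{-1}(U) = \bigcup_{i < \mu} \bigcup_{\a_0 \in A} \bigcap_{\a \in A,\, \a \geq \a_0} f_\a^{-1}(\cl(V_i)) .
\]
Each $f_\a^{-1}(\cl(V_i))$ belongs to $\Pii{0}{\xi'}(X)$; since $\Pii{0}{\xi'}$ is closed under intersections of size at most $\l$ (by dualising the closure of $\Sii{0}{\xi'}$ under $\l$-unions) and $|A| \leq \k \leq \l$, the inner intersection remains in $\Pii{0}{\xi'}(X)$. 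The two outer unions, of sizes at most $\l$ and $\mu \leq \l$ respectively, then place $f^{-1}(U)$ into $\Sii{0}{\xi'+1}(X)$. Thus $f$ is $\Sii{0}{\xi'+1}$-measurable; since $\xi$ is a limit ordinal and $\xi' < \xi$, we have $\xi'+1 < \xi$, hence $f \in \M_{<\xi}(X,Y)$, as desired.

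The main obstacle is the cofinality pigeonhole in the first step: the hypothesis $\cof(\xi) < \cof(\k)$ is precisely what prevents the levels $\xi_\a$ from climbing cofinally inside $\xi$, and thus is what lets us find a single bound $\xi' < \xi$ valid for cofinally many $\a$'s. Once this common bound is secured, the preimage calculation is the routine one already present in Theorem~\ref{teo:D_limits}.
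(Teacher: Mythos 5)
Your proposal is correct and follows essentially the same route as the paper: the pigeonhole on measurability levels via \( \cof(\xi) < \cof(\k) \) to extract a cofinal subnet lying in a single class \( \M_{\xi'}(X,Y) \) with \( \xi' < \xi \), followed by the standard preimage computation from Theorem~\ref{teo:D_limits} showing the limit is \( \Sii{0}{\xi'+1} \)-measurable, hence in \( \M_{<\xi}(X,Y) \) as \( \xi \) is limit. The only cosmetic difference is that you spell out the preimage formula (unifying the \( \mu=\omega \) and \( \mu>\omega \) cases via open sets \( V_i \) with \( \cl(V_i)\subseteq U \)) where the paper simply cites its earlier computations.
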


\begin{proof} 
Suppose that \(f=\lim_{\a<\k}f_\a\) for some sequence \( (f_\a)_{\a < \k} \) of functions in \( \M_{<\xi}(X,Y)\).
Let \((\xi_i)_{i <\cof(\xi)}\) be a sequence of ordinals cofinal in \(\xi\).  Then for every \(\a<\k\) there exists \(i<\cof(\xi)\) such that \(f_\a\) is $\Sii{0}{\xi_i}$-measurable. Since \(\cof(\xi)< \cof(\k)\), there exists some \(\bar \imath <\cof(\xi)\) such that 
\[
A = \{ \a < \k \mid f_\a \text{ is \( \Sii{0}{\xi_{\bar \imath}} \)-measurable} \} 
\]
is unbounded in \( \k \), so that \( f = \lim_{\a \in A} f_\a \). Being a limit of \( \Sii{0}{\xi_{\bar \imath}}\)-measurable functions over an index set of size at most \( \l \), we get that \( f \) is \( \Sii{0}{\xi_{\bar \imath}+1} \)-measurable (by the computations~\eqref{eq:preimage1} and~\eqref{eq:preimage2} in Theorem~\ref{teo:D_limits}), and thus \( f \in \M_{< \xi}(X,Y) \) because \( \xi \) is limit.
\end{proof}

On the other hand, short sequential limits do not suffice either. 
Indeed, if \( \l \) is regular, then \( \Dee{0}{2}(X) \) is a \( \l \)-algebra. Therefore the class of \( \Dee{0}{2} \)-measurable functions, which is contained in \( \M_2(X,Y) \), is closed under \( D \)-limits for all directed sets \( D \) with \( |D| < \l \), and thus it already contains the \( \D \)-closure of \( \M_1(X,Y) \) for \( \D = \{ \k < \l \mid \k \text{ regular} \} \). 
It is open whether the class of \( \l^+ \)-Borel measurable functions can be realized as the \( \D \)-closure of \( \M_1(X,Y) \) for  \( \D =  \{ \k \leq \l \mid \k \text{ regular} \} \) or  \( \D = \{\Fin_\l \} \) (see Section~\ref{sec:questions} for more on the matter).

We conclude this section by showing that there is a variant of \( \Fin_\l \) which is rich enough to generate the whole class of \( \l^+ \)-Borel measurable functions by itself. The idea is to still consider finite subsets of \( \l \), but labeling each of their elements with an ordinal number. More precisely, for every \( d \in \pre{\l}{\l} \) let \( \supp(d) = \{ i < \l \mid d(i) \neq 0 \} \) be the support of \( d \), and let
\[  
\widehat \Fin_\l = \{ d \in \pre{\l}{\l} \mid \supp(d) \text{ is finite and } d(i) < 2+i \text{ for all } i < \l \},
\]
be ordered pointwise, that is, for all \(  d, {d}' \in \widehat{\Fin}_\l \) set 
\[
 d \leq   d' \iff  d(i) \leq  d'(i) \text{ for all } i < \l  .
\]
Clearly \( \widehat{\Fin}_\l \in \D_\l  \), and \( d \leq d' \Rightarrow \supp(d) \subseteq \supp(d') \).

\begin{lemma} \label{lem:unique}
For every \( D \in \D_0 \),
there is a surjection \( \iota \colon \widehat{\Fin}_\l \to D \) which is order-preserving, i.e.\ \( d \leq d' \Rightarrow \iota(d) \leq \iota(d') \) for every \( d,d' \in \widehat{\Fin}_\l \).
\end{lemma}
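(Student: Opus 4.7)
The plan is to handle the two kinds of directed sets comprising $\D_0$ separately, in each case exhibiting an explicit order-preserving surjection $\iota \colon \widehat{\Fin}_\l \to D$.

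For $D = \Fin_\l$, I would simply set $\iota(d) = \supp(d)$. The remark made immediately before the statement of the lemma already records that $d \leq d'$ implies $\supp(d) \subseteq \supp(d')$, which is exactly order-preservation. Surjectivity holds because, given any $F \in \Fin_\l$, the element $d_F \in \pre{\l}{\l}$ defined by $d_F(i) = 1$ for $i \in F$ and $d_F(i) = 0$ otherwise belongs to $\widehat{\Fin}_\l$ (since $1 < 2+i$ for every $i < \l$) and satisfies $\supp(d_F) = F$.

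For $D = \k$ an (infinite) regular cardinal $\leq \l$, I would define $\iota(d) = \max \bigl( (\supp(d) \cap \k) \cup \{0\} \bigr)$. Since $\supp(d)$ is finite and $\k$ is infinite, any finite subset of $\k$ has a maximum strictly below $\k$, so $\iota(d) \in \k$. Order-preservation once more follows from $\supp(d) \subseteq \supp(d')$ together with the monotonicity of $\max$ under inclusion. For surjectivity, map each nonzero $\a < \k$ to the element $d_\a \in \widehat{\Fin}_\l$ with $\supp(d_\a) = \{\a\}$ and $d_\a(\a) = 1$ (which is legitimate because $1 < 2 + \a$), and send $0$ to the constantly zero function; then $\iota(d_\a) = \a$ for every $\a < \k$.

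There is no serious obstacle here: the only thing to verify is that the functions witnessing surjectivity genuinely lie in $\widehat{\Fin}_\l$, i.e., respect the constraint $d(i) < 2 + i$, which is automatic since the only nonzero value used is $1$ and $2 + i \geq 2$ for every ordinal $i$. The argument is entirely self-contained and does not rely on any of the heavier results developed earlier in the paper.
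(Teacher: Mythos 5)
Your proof is correct, and for $D=\Fin_\l$ it coincides with the paper's: $\iota(d)=\supp(d)$, with surjectivity witnessed by indicator functions. The only divergence is in the sequential case. The paper splits it in two: for regular $\k<\l$ it sets $\iota(d)=d(\k)$ --- the one place where the labelling constraint $d(i)<2+i$ is actually used, since it guarantees $d(\k)<\k$ --- while for $D=\l$ it sets $\iota(d)=\max\supp(d)$. Your single formula $\iota(d)=\max\bigl((\supp(d)\cap\k)\cup\{0\}\bigr)$ treats all regular $\k\leq\l$ uniformly, and (unlike the paper's $\max\supp(d)$) is explicitly defined when $\supp(d)=\emptyset$. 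Your verifications of order-preservation, of $\iota(d)<\k$, and of surjectivity are all sound, and an order-preserving surjection between directed sets automatically has the cofinality property needed for the limit-simulation argument that follows the lemma (given $d_0\in D$, any $e_0\in\iota^{-1}(d_0)$ works), so nothing is lost by your variant. One observation worth recording: your maps factor through $\supp\colon\widehat{\Fin}_\l\to\Fin_\l$, so the same computation yields order-preserving surjections from $\Fin_\l$ itself onto every $D\in\D_0$, whereas the paper's choice $\iota(d)=d(\k)$ genuinely exploits the extra labels of $\widehat{\Fin}_\l$. This difference is not innocent: it bears directly on the questions about $\Fin_\l$-limits raised in Section~7, so if you intend your version to replace the paper's, that consequence should be made explicit rather than left implicit.
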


\begin{proof}
If \( D = \Fin_\l \), then we let \( \iota( d) = \supp(d) \).
If \( D = \k \) for some regular \( \k < \l \), we let \( \iota ( d) =  d(\kappa) \).
Finally, if \( D = \l \) then we let \( \iota(d) = \max \supp(d) \). It is easy to check that in all three cases \( \iota \) is as required.
\end{proof}

The map \( \iota \) from Lemma~\ref{lem:unique} allows us to simulate any \( D \)-limit with a \( \widehat{\Fin}_\l \)-limit, for every \( D \in \D_0 \).
Indeed, if \( f = \lim_{d \in D} f_d \), then \( f = \lim_{d \in \widehat{\Fin}_\l} \hat f_{d} \) once we set \( \hat f_{d} = f_{\iota(d)}\). Combining this with Theorem~\ref{teo:D_limits} we then get:

\begin{theorem} \label{teo:D_limitsunique}
Let \( X,Y \in \metr \), and assume that \( \dim(X) = 0 \) if \( \mu = \omega \). For every function \( f \colon X \to Y \), the following are equivalent:
\begin{enumerate-(1)}
\item 
\( f \) is \( \l^+ \)-Borel measurable;
\item 
\( f \) is in the \( \widehat{\Fin}_\l \)-closure of the collection of all continuous functions.
\end{enumerate-(1)}
\end{theorem}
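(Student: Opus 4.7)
My plan is to derive this theorem as an almost immediate consequence of Theorem~\ref{teo:D_limits} together with Lemma~\ref{lem:unique}, which between them do all the real work.

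For the backward direction, I would first observe that \( \widehat{\Fin}_\l \in \D_\l \): each element of \( \widehat{\Fin}_\l \) is determined by its (finite) support together with a labeling of that support by ordinals, so \( |\widehat{\Fin}_\l| \leq |[\l]^{<\aleph_0}| \cdot \aleph_0 = \l \). Consequently, the \( \widehat{\Fin}_\l \)-closure of the collection of continuous functions is contained in its \( \D_\l \)-closure, which by Theorem~\ref{teo:D_limits} coincides with the class of \( \l^+ \)-Borel measurable functions.

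For the forward direction, by Theorem~\ref{teo:D_limits} every \( \l^+ \)-Borel measurable \( f \colon X \to Y \) already belongs to the \( \D_0 \)-closure of the continuous functions. Hence it suffices to show that the \( \D_0 \)-closure is contained in the \( \widehat{\Fin}_\l \)-closure, which by minimality reduces to checking that the \( \widehat{\Fin}_\l \)-closure is itself closed under \( D \)-limits for every \( D \in \D_0 \). Given \( D \in \D_0 \) and a family \( (g_d)_{d \in D} \) in the \( \widehat{\Fin}_\l \)-closure with \( g = \lim_{d \in D} g_d \), pick via Lemma~\ref{lem:unique} an order-preserving surjection \( \iota \colon \widehat{\Fin}_\l \to D \) and set \( \hat g_e = g_{\iota(e)} \) for \( e \in \widehat{\Fin}_\l \). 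I would then verify that \( g = \lim_{e \in \widehat{\Fin}_\l} \hat g_e \): fixing \( x \in X \) and an open neighborhood \( U \) of \( g(x) \), convergence of \( (g_d(x))_{d \in D} \) to \( g(x) \) yields \( d \in D \) such that \( g_{d'}(x) \in U \) whenever \( d' \geq d \); by surjectivity there is \( \bar e \in \widehat{\Fin}_\l \) with \( \iota(\bar e) = d \), and order-preservation of \( \iota \) then ensures \( \iota(e') \geq d \), hence \( \hat g_{e'}(x) = g_{\iota(e')}(x) \in U \), for every \( e' \geq \bar e \).

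The only substantive point beyond citing the two previous results is the verification that the combination of surjectivity and order-preservation of \( \iota \) genuinely allows \( \widehat{\Fin}_\l \)-limits to simulate arbitrary \( D \)-limits for \( D \in \D_0 \). All the interesting structural work has already been done in Theorem~\ref{teo:D_limits} (which reduces to three specific directed sets \( \l \), regular \( \k \leq \l \), and \( \Fin_\l \)) and in Lemma~\ref{lem:unique} (which supplies the maps \( \iota \) for those three cases); the present theorem is essentially a clean packaging of those facts into a single universal directed set.
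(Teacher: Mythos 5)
Your proposal is correct and follows essentially the same route as the paper: the paper likewise observes that \( \widehat{\Fin}_\l \in \D_\l \) (giving the backward direction via Theorem~\ref{teo:D_limits}) and obtains the forward direction by using the order-preserving surjections \( \iota \) of Lemma~\ref{lem:unique} to simulate any \( D \)-limit, \( D \in \D_0 \), by a \( \widehat{\Fin}_\l \)-limit via \( \hat f_d = f_{\iota(d)} \); you merely spell out the convergence verification that the paper leaves implicit. (One cosmetic slip: the number of labelings of a fixed finite support is not countable, since \( d(i) \) ranges over \( 2+i \) values, but the bound \( |\widehat{\Fin}_\l| \leq \l \) still holds, so nothing is affected.)
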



\section{Generalized Baire class 1 functions and $\l$-full functions} \label{sec:baire_class_1}

The following definitions and results generalize~\cite[Section 2]{MR09} to the uncountable setup. 

\begin{definition} \label{definition_full_sets}
Let \( (X,d) \) be a \( \GG \)-ultrametric space. A set \( A  \subseteq X \) is \markdef{full} (\markdef{with constant} \( \rho \in \GG^+ \))
if \( B_d(x,\rho) \subseteq A \) for every \( x \in A \).
\end{definition}

Obviously, if \( \rho' \in \GG^+ \) is smaller than \( \rho \), then every set \( A \subseteq X \) which is full with constant \( \rho \) is also full with constant \( \rho' \).
If \( X = \pre{\mu}{\lambda} \), then \( B_d(x,r_\a) = \Nbhd_{x \restriction \a}\), and thus
a set \( A \subseteq \pre{\mu}{\l} \) is full with constant \( r_\alpha \) if and only if \( \Nbhd_{x \restriction \a} \subseteq A \) for every \( x \in A \).

\begin{proposition} 
Let \( (X,d) \) be a \( \GG \)-ultrametric space.
For every \( \rho \in \GG^+ \), the collection of all full subsets of \( X \) with constant \( \rho \) is a complete algebra consisting of clopen sets. Therefore, the collection of all full subsets of \( X \) (with any constant) is a \( \mu \)-subalgebra of its clopen sets.
\end{proposition}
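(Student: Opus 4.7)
The plan is to translate fullness with constant $\rho$ into invariance under an equivalence relation, which makes the algebraic closure properties essentially tautological.

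First, I would fix $\rho \in \GG^+$ and define the relation $x \sim_\rho y$ iff $d(x,y) < \rho$. Using the ultrametric triangle inequality $d(x,z) \leq \max\{d(x,y), d(y,z)\}$, the relation $\sim_\rho$ is transitive, and reflexivity and symmetry are immediate. Its equivalence classes coincide with the open balls $B_d(x,\rho)$. A straightforward unravelling of Definition~\ref{definition_full_sets} then shows that $A \subseteq X$ is full with constant $\rho$ if and only if $A$ is saturated with respect to $\sim_\rho$, i.e.\ it is a union of $\sim_\rho$-classes. Such saturated sets are in natural bijection with $\pow(X/{\sim_\rho})$, so they form a complete Boolean subalgebra of $\pow(X)$. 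Moreover, each $\sim_\rho$-class is open (being an open ball) and closed (being the complement of the union of the remaining classes), so every full set with constant $\rho$ is clopen. This handles the first assertion.

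For the second assertion, the key auxiliary fact is that fullness is monotone in the constant: if $\rho \leq \rho'$ and $A$ is full with constant $\rho'$, then $B_d(x,\rho) \subseteq B_d(x,\rho') \subseteq A$ for every $x \in A$, so $A$ is also full with constant $\rho$. Consequently, if I can replace the distinct constants witnessing the fullness of $< \mu$-many sets $(A_i)_{i < \kappa}$ by a single common constant $\rho$, the first part of the proposition immediately yields that their complements, unions, and intersections are again full (with constant $\rho$). To produce $\rho$, I pick $\rho_i \in \GG^+$ witnessing fullness of $A_i$, and for each $i$ use the coinitiality of $(r_\alpha)_{\alpha < \mu}$ in $\GG^+$ to choose $\alpha_i < \mu$ with $r_{\alpha_i} \leq \rho_i$. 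Since $\mu = \cof(\l)$ is regular and $\kappa < \mu$, the ordinal $\alpha = \sup_{i<\kappa} \alpha_i$ is still below $\mu$, so $\rho := r_\alpha \in \GG^+$ satisfies $\rho \leq \rho_i$ for every $i < \kappa$, as required.

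The only delicate point in the argument is this final step, where the regularity of $\mu$ and the coinitiality type of $\GG^+$ are used to manufacture a common positive constant out of fewer than $\mu$ given ones; everything else is a routine translation through the equivalence relation $\sim_\rho$. Note in particular that closure under arbitrary unions \emph{with a fixed constant} is automatic, so the bound $\mu$ is only relevant once the constants are allowed to vary.
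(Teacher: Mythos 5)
Your proof is correct and is essentially the paper's argument in different clothing: the observation that $\sim_\rho$ is an equivalence relation whose classes are the open balls of radius $\rho$ is exactly the fact the paper uses to close under complements (namely $B_d(x,\rho)=B_d(y,\rho)$ whenever $d(x,y)<\rho$), and your treatment of the second assertion — finding a common constant below $<\mu$-many given ones via the coinitial sequence $(r_\alpha)_{\alpha<\mu}$ and the regularity of $\mu$ — is precisely what the paper does. The saturation/quotient packaging is a pleasant way to make the complete-algebra claim tautological, but it is not a genuinely different route.
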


\begin{proof} 
It is obvious that the collection of full sets with constant \( \rho \) is closed under arbitrary unions and consists of open sets, so it is enough to show that it is also closed under complements. Let \( A \subseteq X \) be full with constant \( \rho \), and let \( y \in X \setminus A \). Assume towards a contradiction that \( B_d(y,\rho) \cap A \neq \emptyset \), as witnessed by \( x \). Then \( B_d(x,\rho) = B_d(y,\rho) \) and hence \( y \in A \) by fullness, a contradiction.

Finally, let \( (A_\alpha)_{\alpha < \nu} \) for \( \nu < \mu \) be a family of full sets, and let \( \rho_\alpha \in \GG^+ \) be such that \( A_\alpha \) is full with constant \( \rho_\alpha \). Since \( \mu \) is regular and \( \GG \) has degree \( \mu \), there is \( \rho \in \GG^+ \) such that \( \rho \leq \rho_\alpha \) for all \( \alpha < \nu \). Then each \( A_\alpha \) is full with constant \( \rho \), and thus so is \( \bigcup_{\alpha < \nu } A_\alpha \).
\end{proof}

\begin{definition}\label{definition_full_functions}
Let \( (X,d) \) be a \( \GG \)-ultrametric space, \( Y \) be any set, and \( \nu \) be a cardinal. 
A function \( f \colon X \to Y \) is called \markdef{\( \nu \)-full} (\markdef{with constant \( \rho \in \GG^+ \)}) if \( |\ran(f)| \leq \nu \), and for every \( y \in \ran(f) \) its preimage \( f^{-1}(y) \) is full (with constant \( \rho \)).
The function \( f \) is \markdef{\( < \nu \)-full} (\markdef{with constant \( \rho \in \GG^+ \)}) if it is \( \nu' \)-full (with constant \( \rho \)) for some \( \nu' < \nu \).
Finally, \( f \) is \markdef{full} (\markdef{with constant \( \rho \in \GG^+ \)}) if it is \( \nu \)-full (with constant \( \rho \)) for some cardinal \( \nu \) or, equivalently, if \( f^{-1}(y) \) is full (with constant \( \rho \)) for all \( y \in \ran(f) \). 
\end{definition}

Equivalently, \( f \) is \( \nu \)-full (with constant \( \rho \)) if it is locally constant on a partition of \( X \) consisting of at most \( \nu \)-many full sets (with constant \( \rho \)). 
Note also that if \( f \) is \( < \nu \)-full for some \( \nu \leq \mu \), then
there is \( \rho \in \GG^+ \) such that \( f \) is \( < \nu \)-full \emph{with constant \( \rho \)}.  

As in the classical setting, full functions are intimately related to Lipschitz functions, where we say that a map \( f \) between two \( \GG \)-metric spaces \( (X,d_X) \) and \( (Y,d_Y) \) is \markdef{Lipschitz} (\markdef{with constant} \( R \in \GG^+ \)) if for all \( x,y \in X \)
\[  
d_Y(f(x),f(y)) \leq R \cdot d_X(x,y).
\]
(This makes sense because we assumed that \( \GG \) is a field.)

\begin{lemma}\label{fullarecont} 
Let \( (X,d_X) \) be a \( \GG \)-ultrametric space, and \( Y \) be a topological space. 
If \( f \colon X \to Y \) is full, then it is continuous.

Moreover, if \( (Y,d_Y) \) is a \( \GG \)-metric space, \( f \) has bounded%
\footnote{A subset \( A \) of a \( \GG \)-metric space \( (X,d) \) is \markdef{bounded} if there is \( r \in \GG^+\) such that \( \diam(A) \leq r \), i.e.\ \( d(x,y) \leq r \) for all \( x,y \in A \).} 
range,
and there is  some \( \rho \in \GG^+ \) such that \( f \) is full with constant \( \rho \), then  \( f \) is Lipschitz. 
\end{lemma}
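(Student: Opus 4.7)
The plan is to unwind the definitions directly; no sophisticated machinery is needed, just the basic geometry of fullness.

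For the first claim, I would prove continuity at an arbitrary point $x \in X$ by observing that $f$ is locally constant around $x$. Indeed, since $f^{-1}(f(x))$ is full, by Definition~\ref{definition_full_sets} there exists some $\rho_x \in \GG^+$ (depending on the fiber $f^{-1}(f(x))$, hence on $x$) such that $B_{d_X}(x,\rho_x) \subseteq f^{-1}(f(x))$. Hence $f$ sends the open neighborhood $B_{d_X}(x,\rho_x)$ of $x$ into the singleton $\{f(x)\}$, which is trivially contained in every open neighborhood of $f(x)$ in $Y$. This establishes continuity at $x$, and since $x$ was arbitrary, $f$ is continuous.

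For the Lipschitz claim, I would first dispose of the degenerate case where $|\ran(f)| \leq 1$: then $f$ is constant and Lipschitz with any positive constant. Otherwise, use boundedness of the range to fix some $R_0 \in \GG^+$ with $d_Y(f(u),f(v)) \leq R_0$ for all $u,v \in X$, and set $R = R_0 \cdot \rho^{-1} \in \GG^+$ (this uses that $\GG$ is a field). Given $x, y \in X$, the desired inequality $d_Y(f(x),f(y)) \leq R \cdot d_X(x,y)$ is immediate when $f(x) = f(y)$. When $f(x) \neq f(y)$, the \emph{uniform} constant $\rho$ of fullness yields $B_{d_X}(x,\rho) \subseteq f^{-1}(f(x))$, so $y$ lies outside this ball; hence $d_X(x,y) \geq \rho$, from which
\[
d_Y(f(x),f(y)) \leq R_0 = R \cdot \rho \leq R \cdot d_X(x,y).
\]

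There is essentially no real obstacle; the argument is a direct computation once the definitions are parsed. The only subtle point worth flagging is the role of the fullness constant: for continuity each fiber can have its own $\rho$, whereas for the Lipschitz conclusion a single uniform $\rho$ across all fibers is needed — which is precisely why the hypothesis is strengthened in the second half of the statement (together with boundedness, which is what converts the lower bound $d_X(x,y) \geq \rho$ on distinct fibers into a global Lipschitz constant).
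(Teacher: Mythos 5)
Your proof is correct and follows essentially the same route as the paper: the first claim is exactly the local-constancy observation the paper dismisses as obvious, and the Lipschitz argument uses the same constant \( R = R_0 \cdot \rho^{-1} \) with the same two inequalities, your case split on \( f(x) = f(y) \) versus \( f(x) \neq f(y) \) being the contrapositive reformulation of the paper's split on \( d_X(x,x') < \rho \) versus \( d_X(x,x') \geq \rho \). Your closing remark correctly identifies why the uniform constant is needed only in the second half.
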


The second part of the lemma applies to any \( < \mu \)-full function, and also to any
full function with constant \( \rho \) whenever \( Y \) has bounded diameter.

\begin{proof}
The first part of the lemma is obvious, so we only prove the second one.
Let \( r' \in \GG^+ \) be such that \( \diam(\ran(f)) \leq r' \), and set \( R = r' \cdot \rho^{-1} \). Pick any \( x,x' \in X \). If \( d_X(x,x') < \rho \), then \( d_Y(f(x),f(x')) = 0 \leq R \cdot d_X(x,x') \) because \( f \) is full with constant \( \rho \). 
If instead \( d_X(x,x') \geq \rho \), then
\[  
d_Y(f(x),f(x')) \leq r' = r' \cdot \rho^{-1} \cdot \rho \leq R \cdot d_X(x,x'). 
\]
Thus \( f \) is Lipschitz with constant \( R \).
\end{proof}

\begin{lemma}\label{lem:comp_full_lip} 
Let \( (X,d_X) \) and \( (Z,d_Z) \) be \( \GG \)-ultrametric spaces, and let 
\( Y \) be any set.
Let \(f \colon X \to Y \) be a \(\nu\)-full function, for some cardinal \( \nu \). 
If \(h \colon Z \to X \) is a Lipschitz function, then \(f \circ h \colon Z \to Y \) is \(\nu\)-full. 
Moreover, if \( f \) were \( \nu \)-full with constant \( \rho \in \GG^+\), then there is \( \rho' \in \GG^+ \) such that \( f \circ h \) is \( \nu \)-full with constant \( \rho' \).
The same is true if we replace $\nu$-full with $<\nu$-full.
\end{lemma}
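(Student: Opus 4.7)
The plan is a straightforward definition-chase: fullness is a local ball condition on preimages, and Lipschitz maps pull balls back to balls with a rescaled radius, so one expects the composition $f \circ h$ to inherit $\nu$-fullness, with the Lipschitz constant of $h$ entering only as a rescaling factor applied to the fullness constant of $f$.

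First I would observe the trivial range bound $|\ran(f \circ h)| \leq |\ran(f)| \leq \nu$. The substantive step is then to show, for each $y \in \ran(f \circ h)$, that $(f \circ h)^{-1}(y) = h^{-1}(f^{-1}(y))$ is full in $Z$. I would fix a Lipschitz constant $R \in \GG^+$ for $h$ and, using $\nu$-fullness of $f$, a fullness constant $\rho_y \in \GG^+$ for the full set $A = f^{-1}(y) \subseteq X$. A one-line application of the Lipschitz inequality then shows that $h^{-1}(A)$ is full in $Z$ with constant $\rho_y \cdot R^{-1}$; the division by $R$ is legitimate because $\GG$ is assumed to be a field. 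Concretely, if $z \in h^{-1}(A)$ and $d_Z(z,z') < \rho_y \cdot R^{-1}$, then
\[
d_X(h(z), h(z')) \leq R \cdot d_Z(z,z') < \rho_y,
\]
so $h(z') \in B_{d_X}(h(z), \rho_y) \subseteq A$ by fullness of $A$ at $h(z)$, whence $z' \in h^{-1}(A)$.

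For the ``moreover'' clause, if $f$ is $\nu$-full with a common constant $\rho$ independent of $y$, the same computation produces a single $\rho' = \rho \cdot R^{-1}$ witnessing $\nu$-fullness of $f \circ h$ uniformly. The $<\nu$-full variants are then purely formal: apply the $\nu'$-full version for the $\nu' < \nu$ witnessing $<\nu$-fullness of $f$.

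I do not anticipate any real obstacle. The only subtlety worth pinning down is the convention that $B_d(x,\rho)$ denotes the open ball $\{y : d(x,y) < \rho\}$, as is implicit in Definition~\ref{definition_full_sets} and in the identification $\Nbhd_s = B_d(x, r_{\leng(s)})$ used in Proposition~\ref{prop:bilip}. Once this is fixed, the strict inequality produced by the Lipschitz bound transports cleanly through to the conclusion, and the lemma follows.
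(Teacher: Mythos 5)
Your proposal is correct and follows essentially the same route as the paper's proof: reduce to showing that the $h$-preimage of a full set with constant $\rho$ is full with constant $\rho\cdot R^{-1}$, via the one-line Lipschitz computation $d_X(h(z),h(z'))\leq R\cdot d_Z(z,z')<\rho$. The only cosmetic difference is that you spell out the per-value constants $\rho_y$ and the range bound explicitly, which the paper leaves implicit.
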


\begin{proof}
Suppose that \( h \) is Lipschitz with constant \( R \in \GG^+ \).
It is enough to show that the preimage via \(h\) of a full set \(A \subseteq X\)
with constant \(\rho \in \GG^+\) is a full set with constant \(\rho' = \rho \cdot R^{-1}\). Indeed, let \(z \in Z \) be such that \(h(z) \in A \) and let \(z' \in Z\) be such that \(d_Z(z, z') < \rho' \). Then \(d_X(h(z), h(z')) \leq R \cdot d_Z(z, z') < R \cdot \rho \cdot R^{-1}= \rho\), hence \(h(z') \in A\). This shows that \(B_{d_Z}(z , \rho' ) \subseteq h^{-1}(A)\). 
\end{proof}

We now come to the problem of finding the ``right'' generalization of
the classical notion of a Baire class \( 1 \) function. When we move to cardinals \( \lambda > \omega \) and consider functions \( f \colon X \to Y \) between two spaces \( X , Y \in \metr \), we have two options: either we only consider \( \lambda \)-limits of continuous functions (i.e.\ the class \( \llim{\l} \M_1(X,Y) \)), or, in view of Theorem~\ref{teo:D_limits}, we allow limits over arbitrary directed sets of size at most \( \l \) (i.e.\ we consider \( \llim{\D_\l} \M_1(X,Y) \)). In this paper, the former are dubbed \markdef{\( \lambda \)-Baire class \( 1 \) functions}, while the latter are called \markdef{weak \( \l \)-Baire class \( 1 \) functions}. 
We are going to show that if \( \lambda \) is regular and \( Y \) is spherically complete, then the two notions (as well as all intermediate ones) coincide, and if moreover \( X \) is a \( \GG \)-ultrametric space, then this is the same as considering the class of \( \lambda \)-limits of \( \lambda \)-full functions.

\begin{proposition}\label{full_baire} 
Let \( \lambda \) be regular, and let \( X, Y \subseteq \pre{\l}{\l} \) with \( Y  \) superclosed.
For every $\Sii{0}{2}$-measurable function $f \colon X \to Y$ there is a sequence of functions \( (f_\alpha)_{\alpha < \lambda }\) such that \( f  = \lim_{\a < \l} f_\a \) and \( f_\alpha \colon X \to Y \) is \( \lambda \)-full with constant \( r_\alpha \). 

If moreover we assume $\lambda$ to be strong limit (hence inaccessible), then each \( f_\alpha \) can be taken to be $<\lambda$-full with constant \( r_\a \).
\end{proposition}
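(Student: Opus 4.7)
The plan is to adapt the classical Lebesgue–Hausdorff–Banach construction to the generalized setting. Write $Y = [T]$ with $T$ superclosed, and fix for each $s \in T \cap \pre{<\l}{\l}$ a representative $z_s \in \Nbhd_s \cap Y$ (possible since $T$ is pruned); for $s \in T$ of length $\l$, which is itself a point of $Y$, set $z_s = s$. For each $s \in T \cap \pre{<\l}{\l}$, the $\Sii{0}{2}$-measurability of $f$ yields $A_s := f^{-1}(\Nbhd_s \cap Y) \in \Sii{0}{2}(X)$, which by Lemma~\ref{lem:reg}\ref{lem:reg1} applied with $\xi = 2$ decomposes as $A_s = \bigsqcup_{i<\l} D^s_i$ with the $D^s_i \in \Dee{0}{2}(X)$ pairwise disjoint. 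Then for each $\alpha < \l$ the collection $\{D^s_i : s \in T \cap \pre{\alpha}{\l}, i < \l\}$ is a $\Dee{0}{2}$-partition of $X$ of cardinality at most $\l$ (using $2^{<\l} = \l$ and regularity of $\l$).

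Now I define $f_\alpha$. Being $\l$-full with constant $r_\alpha$ amounts to being constant on each atomic cell $C = \Nbhd_t \cap X$ at level $\alpha$, so I take this constant to be $z_{\sigma_\alpha(C)}$, where $\sigma_\alpha(C) \in T \cap \pre{\alpha}{\l}$ is the first coordinate of the $\prec$-least pair $(s, i) \in T \cap \pre{\alpha}{\l} \times \l$ with $D^s_i \cap C \neq \emptyset$, for a fixed well-ordering $\prec$ that prioritizes pairs with small $i$ and, among such, gives preference to ``stable'' pieces of the decomposition (namely those whose defining closed witnesses in the proof of Lemma~\ref{lem:reg} are low in the $\l$-union). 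The $\l$-fullness and the bound $|\ran(f_\alpha)| \leq |T_X \cap \pre{\alpha}{\l}| \leq \l^{<\l} = \l$ follow by construction. The key claim is $f_\alpha \to f$ pointwise: for fixed $x \in X$ and $\beta < \l$, set $s^* = f(x) \restriction \beta$; since $x \in A_{s^*}$, we have $x \in D^{s^*}_{i^*}$ for some $i^*$, and for every $\alpha \geq \beta$ also $x \in D^{f(x)\restriction\alpha}_{i^*_\alpha}$. One then shows that for all sufficiently large $\alpha$, every pair $(s, i) \prec (f(x)\restriction\alpha, i^*_\alpha)$ with $s$ not extending $s^*$ satisfies $x \notin D^s_i$, and by the choice of $\prec$ together with the closed-set structure underlying the decomposition, $D^s_i$ becomes disjoint from the shrinking cell $C_\alpha(x)$, forcing $\sigma_\alpha(C_\alpha(x))$ to extend $s^*$ and hence $f_\alpha(x) \restriction \beta = f(x) \restriction \beta$.

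For the strong-limit refinement to $<\l$-fullness, fix an increasing sequence $(\nu_\alpha)_{\alpha < \l}$ cofinal in $\l$ with $\nu_\alpha < \l$, and replace $z_{\sigma_\alpha(C)}$ by $z_{\tilde\sigma_\alpha(C)}$, where $\tilde\sigma_\alpha(C)$ is the longest initial segment of $\sigma_\alpha(C)$ lying in $\pre{<\nu_\alpha}{\nu_\alpha}$. The strong-limit property gives $|\ran(f_\alpha)| \leq |\pre{<\nu_\alpha}{\nu_\alpha}| \leq 2^{\nu_\alpha} < \l$, and convergence persists since every coordinate $f(x)(\gamma) < \l$ is eventually less than $\nu_\alpha$, so for every fixed $\beta$ the truncation preserves agreement with $f(x)$ on the first $\beta$ coordinates once $\nu_\alpha$ is large enough.

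The main obstacle is the convergence verification. Naive priority rules such as the longest common prefix of $\{f(y) : y \in C\}$, or plain lexicographic order on $s$, fail at discontinuities of $f$: a single outlier $y \in C$ with $f(y)$ far from $f(x)$ can collapse the common prefix, and the wrong ``dominant'' piece may be selected on cells that persistently contain a point of discontinuity. The priority $\prec$ must therefore be tuned in tandem with the $\Dee{0}{2}$-decomposition so that, for each $x$, the pieces $D^s_i$ not containing $x$ are asymptotically pruned from the shrinking cells $C_\alpha(x)$; ensuring this uniformly in $x$ is where the bulk of the technical work lies.
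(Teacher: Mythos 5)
There is a genuine gap, and it sits exactly where you locate it yourself: the convergence verification, which is the entire content of the proposition, is not carried out. Worse, the selection rule you propose cannot be made to work by any choice of the well-ordering \( \prec \). You define \( f_\alpha \) on the cell \( C = \Nbhd_t \cap X \) by picking the \( \prec \)-least pair \( (s,i) \) with \( D^s_i \cap C \neq \emptyset \), where the \( D^s_i \) are merely \( \Dee{0}{2} \). The obstruction is that for a non-closed piece \( D^s_i \), a point \( x \in \cl(D^s_i) \setminus D^s_i \) satisfies \( D^s_i \cap \Nbhd_{x \restriction \alpha} \neq \emptyset \) for \emph{every} \( \alpha < \l \), so a ``wrong'' pair can persist forever on the shrinking cells of \( x \) without ever containing \( x \); if it has higher priority than the correct pair, \( f_\alpha(x) \) is stuck at the wrong value. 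Since one can arrange \( x \in D^{s}_{i} \cap \bigl(\cl(D^{s'}_{i'}) \setminus D^{s'}_{i'}\bigr) \) and simultaneously \( x' \in D^{s'}_{i'} \cap \bigl(\cl(D^{s}_{i}) \setminus D^{s}_{i}\bigr) \) for incompatible \( s, s' \) (two disjoint \( \Sii{0}{2} \)-preimages each dense in a common open set), the priority requirements imposed by different points conflict, and no single global \( \prec \) resolves them. Your parenthetical about preferring pieces ``whose defining closed witnesses are low in the \( \l \)-union'' gestures at the cure but never implements it, and the final paragraph concedes that ``the bulk of the technical work'' remains.

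The paper's proof shows what that missing work has to be: one must select among \emph{closed} sets via their \emph{trees}. It decomposes coordinatewise, writing \( f^{-1}(\{y \in Y \mid y(i) = \ell\}) = \bigcup_{j<\l} F^\ell_{j,i} \) with each \( F^\ell_{j,i} \) closed, and assigns to each \( s \in \tr_X \) and each output coordinate \( i \) the G\"odel-least pair \( (j,\ell) \) with \( s \in \tr^\ell_{j,i} \). The point is that \( s \in \tr^\ell_{j,i} \) is exactly the condition \( \Nbhd_s \cap F^\ell_{j,i} \neq \emptyset \), and for a closed set persistence of this condition along the branch \( x \) forces \( x \in [\tr^\ell_{j,i}] = F^\ell_{j,i} \); hence the least persistent pair is non-decreasing in \( \alpha \), bounded by some pair actually containing \( x \), and stabilizes to a value \( \ell^{x\restriction\alpha}_i = f(x)(i) \). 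This is precisely the property that fails for your \( \Dee{0}{2} \) pieces. The remaining ingredients of the paper's argument --- truncating \( \ell^s_i \) by \( \leng(s) \) to control \( |\ran(f_\alpha)| \), passing to the longest initial segment lying in \( \tr_Y \) (which needs superclosedness of \( Y \), not just pruning), and using regularity of \( \l \) to take \( \bar\alpha = \sup\{\g, \xi^x_i, f(x)(i) \mid i < \g\} \) --- are all absent from or only loosely approximated in your sketch (your truncation into \( \pre{<\nu_\alpha}{\nu_\alpha} \) also has a minor issue when the longest such initial segment has limit length \( \nu_\alpha \)). To repair the proposal you should replace the disjoint \( \Dee{0}{2} \)-decomposition by closed decompositions of the preimages of basic clopen sets and run the selection on their trees.
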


\begin{proof} 
For \( i, \ell < \lambda \), set $\mathcal{N}^\ell_{i} =\{y \in Y \mid y(i)=\ell\} $. Since \( \mathcal{N}^\ell_i \) is clopen, $f^{-1}(\mathcal{N}^\ell_{i}) \in \Sii{0}{2}(X)$. 
Let \( F^\ell_{j,i}\) be closed subsets of \( X \) such that  $f^{-1}(\mathcal{N}^\ell_{i})= \bigcup_{j<\lambda}F^\ell_{j,i}$. 
For every $i,j,\ell<\lambda$, let $\tr^\ell_{j,i}$ be the pruned tree of $ F^\ell_{j,i}$, and let $\tr_X$ be the pruned tree of $X$. 
Note that 
$\bigcup_{\ell<\lambda}f^{-1}(\mathcal{N}^\ell_{i})= X$ for every $i<\lambda$, and therefore
\begin{equation} \label{aster} 
\bigcup_{j,\ell<\lambda}\tr^\ell_{j,i} = \tr_X.
\end{equation}
For every $s \in  \tr_X$ and $i<\lambda$, 
let $(j^s_{i},\ell^s_{i} )$ 
be the smallest (with respect to the G\"odel ordering) pair \( (j,\ell) \in \lambda \times \lambda \)  such that $s \in \tr^{\ell}_{j,i}$. 
Notice that for every $x \in X$ and every $i<\l$ there is \( \xi^x_i < \l \) such that 
\( \ell^{x \restriction \a}_i = \ell^{x \restriction \xi^x_i}_i \) and \( j^{x \restriction \a}_i = j^{x \restriction \xi^x_i}_i \), for all \( \xi^x_i \leq \a < \l\).
(Otherwise \( x \notin \tr^\ell_{j,i}\) for any \( j,\ell < \l\), contradicting~\eqref{aster}.)
In particular, 
$\ell^{x\restriction \a}_{i}=f(x)(i)$ for every \( \a \geq \xi^x_i \). 

Let $\tr_Y$ be the superclosed pruned tree of $Y$, and for every sequence \(t \in \tr_Y\) pick some \(y_t \in Y\) such that \(t \subseteq y_t\).
Let \( \phi' \colon \tr_X \to \pre{<\l}{\l}\) be such that \( \leng(\phi^\prime (s)) = \leng(s) \) and \( \phi^\prime (s)(i) = \min \{ \ell^s_i,\leng(s) \} \) for every \( i < \leng(s) \). 
We define the map \( \phi \colon \tr_X \to \tr_Y\)  by letting \(\phi(s) \subseteq \phi^\prime(s)\) be the longest sequence still in \(\tr_Y\), namely, 
\[
\phi(s) = 
    \bigcup \{v \in \tr_Y \mid v \subseteq  \phi^\prime(s) \}.
\] 
Note that \(\phi(s) \in \tr_Y\)  since $\tr_Y$ is superclosed.
Finally, for every \( \a < \l \), let \( f_\a \colon X \to Y \) be defined by
\[  
f_\a(x) = y_{\phi(x \restriction \a )}.
\]

Notice that \( \varphi'(s) \in \pre{\leng(s)}{(\leng(s)+1)} \), thus \( f_\a \) attains at most \( |\pre{\a}{(\a+1)}| \)-many values. This means that \( |\ran(f_\a)| \leq \l \), and furthermore \( |\ran(f_\a)| < \l \) when \( \l \) is strong limit.
Moreover, if \( x \restriction \a = y \restriction \a \), then \( f_\a(x) = f_\a(y) \). Therefore, \( f_\a \) is \( \l \)-full (or even \( < \l \)-full, if \( \l \) is strong limit) with constant \( r_\a \). 

It remains to show that \( f  = \lim_{\a < \l } f_\a \).
To this aim, fix an element $x \in A$: we need to prove that for every $\g<\l$ 
there exists $\bar\a<\l$ such that $f_\a(x)\restriction \g = f(x) \restriction \g $  for every $\a \geq \bar\a$. Set 
\[
\bar\a =  \sup \{\g, \xi^x_i, f(x)(i) \mid i < \g \}.  
\]
Note that $\bar\a<\l$ because $\l$ is regular. Fix any \( i < \g \) and \( \a \geq \bar \a \). Since \( \bar \a \geq \xi^x_i \), then \( \ell^{x \restriction \a}_i = f(x)(i) \). Since \( \bar \a \geq f(x)(i) = \ell^{x \restriction \a}_i\), then \( \phi^\prime(x \restriction \a)(i) = \ell^{x \restriction \a}_i \); this implies that  \( \phi^\prime(x \restriction \a) \restriction \g =f(x) \restriction \g \in \tr_Y \), hence \( \phi(x \restriction \a) (i) =  \phi^\prime(x \restriction \a)  (i) \) . Finally, since \( \bar \a \geq \gamma \), then \( f_\alpha(x)(i) = \phi(x \restriction \a )(i) \).
It follows that $f_\a(x)\restriction \g = f(x) \restriction \g $, as desired.
\end{proof}

The following is the analogue in the uncountable regular case of~\cite[Corollary 2.16]{MR09}.

\begin{theorem}\label{car_baireclass1} 
Let \( \l \) be regular, \( X , Y \in \metr \), and suppose that \( Y \) is spherically complete. 
For every $f \colon X \to Y$, the following are equivalent:
\begin{enumerate}[label={\upshape (\arabic*)}, leftmargin=2pc]
\item\label{I} 
$f$ is a \( \l \)-Baire class \( 1 \) function;
\item\label{II} 
$f$ is a weak \( \l \)-Baire class \( 1 \) function; 
\item\label{III} 
$f$ is $\Sii{0}{2}$-measurable.
\end{enumerate}
If \( X \) is a \( \GG \)-ultrametric space, then the above conditions are also equivalent to
\begin{enumerate}[label={\upshape (\arabic*)}, leftmargin=2pc,resume]
\item\label{IIII} 
$f = \lim_{\a < \l } f_\a$ with \( f_\a \) a \( \l \)-full function with constant \( \rho_\a \), for some \( \rho_\a \in \GG^+ \).
\end{enumerate} 
In case \( \l \) is strong limit (and \( X \) is \( \GG \)-ultrametric), item~\ref{IIII} can be replaced by
\begin{enumerate}[label={\upshape (4\('\))}, leftmargin=2pc]
\item\label{IIII'} 
$f$ is a $\l$-limit of  $<\l$-full functions,
\end{enumerate}
and if \( Y \) is a \( \GG \)-metric space we can further add: 
\begin{enumerate}[label={\upshape (5)}, leftmargin=2pc]
\item\label{V} 
\( f \) is a \( \l \)-limit of  Lipschitz functions.
\end{enumerate}
\end{theorem}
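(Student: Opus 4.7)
The plan is to prove the three core equivalences \ref{I}--\ref{III} in the cycle (1) $\Rightarrow$ (2) $\Rightarrow$ (3) $\Rightarrow$ (1), and then handle the additional items \ref{IIII}, \ref{IIII'}, \ref{V} under the extra assumptions. The implication (1) $\Rightarrow$ (2) is immediate since $\lambda$ is a directed set of size $\leq \lambda$. For (2) $\Rightarrow$ (3), I would simply repeat the final computation in the proof of Theorem~\ref{teo:D_limits}: since $\mu = \lambda > \omega$, the space $Y$ is zero-dimensional, so for every clopen $U \subseteq Y$ the equation~\eqref{eq:preimage1} exhibits $f^{-1}(U)$ as a union of $\leq \lambda$-many closed sets, hence $f^{-1}(U) \in \Sii{0}{2}(X)$. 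Since clopen sets form a basis for $Y$ and $Y$ has weight $\leq \lambda$, this gives $\Sii{0}{2}$-measurability.

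The heart of the argument is (3) $\Rightarrow$ (1). By Theorem~\ref{theorem:sikorski}, we may fix a compatible $\GG$-ultrametric on $X$, and by Proposition~\ref{prop:bilip} obtain a Lipschitz-with-constant-$1$ topological embedding $h_X \colon X \to \pre{\lambda}{\lambda}$ (recall $\mu = \lambda$). Since $Y$ is spherically complete and $\mu$-metrizable, the \emph{moreover} clause of Proposition~\ref{prop:bilip} yields a Lipschitz-$1$ topological embedding $h_Y \colon Y \to \pre{\lambda}{\lambda}$ whose image $Y' = h_Y(Y)$ is superclosed. Setting $X' = h_X(X)$, the function $g = h_Y \circ f \circ h_X^{-1} \colon X' \to Y'$ is still $\Sii{0}{2}$-measurable, so Proposition~\ref{full_baire} provides $\lambda$-full functions $g_\alpha \colon X' \to Y'$ with constant $r_\alpha$ such that $g = \lim_{\alpha < \lambda} g_\alpha$. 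Each $g_\alpha$ is continuous by Lemma~\ref{fullarecont}, hence so is $f_\alpha = h_Y^{-1} \circ g_\alpha \circ h_X \colon X \to Y$, and pointwise convergence is preserved under homeomorphism, yielding $f = \lim_{\alpha<\lambda} f_\alpha$.

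Under the additional hypothesis that $X$ is already $\GG$-ultrametric, this same construction upgrades (3) to \ref{IIII}: since $h_X$ is Lipschitz with constant $R = 1$, Lemma~\ref{lem:comp_full_lip} applied to $g_\alpha$ shows that $g_\alpha \circ h_X$ is $\lambda$-full with constant $r_\alpha$, and post-composing with the bijection $h_Y^{-1}$ does not alter the fullness of the fibers; hence each $f_\alpha$ is $\lambda$-full with constant $\rho_\alpha = r_\alpha$. The implication \ref{IIII} $\Rightarrow$ \ref{I} is then immediate from Lemma~\ref{fullarecont}, closing the loop. When $\lambda$ is inaccessible (strong limit regular), the final sentence of Proposition~\ref{full_baire} gives $g_\alpha$ that are $<\lambda$-full with constant $r_\alpha$, and the same transfer yields \ref{IIII'}; conversely \ref{IIII'} $\Rightarrow$ \ref{I} again by Lemma~\ref{fullarecont}. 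Finally, when $Y$ is a $\GG$-metric space, the second part of Lemma~\ref{fullarecont} (which explicitly covers all $<\mu$-full functions, and in our setting $<\mu = <\lambda$) shows that every $<\lambda$-full function is Lipschitz, proving \ref{IIII'} $\Rightarrow$ \ref{V}; and \ref{V} $\Rightarrow$ \ref{I} is trivial.

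The main obstacle is the transfer in (3) $\Rightarrow$ \ref{IIII}: one must ensure that the Lipschitz embedding $h_X$ supplied by Proposition~\ref{prop:bilip} actually has constant $1$ (so that the fullness constant $r_\alpha$ of $g_\alpha$ is preserved rather than degraded to $r_\alpha \cdot R^{-1}$), and that the superclosedness of the image of $h_Y$ — needed to invoke Proposition~\ref{full_baire} — really follows from spherical completeness of $Y$. Both facts are exactly what the \emph{moreover} statements of Proposition~\ref{prop:bilip} provide, so the argument goes through; every other implication is a direct combination of Lemmas~\ref{fullarecont} and~\ref{lem:comp_full_lip} with the routine preimage computation from Theorem~\ref{teo:D_limits}.
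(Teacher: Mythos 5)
Your proposal is correct and follows essentially the same route as the paper: the cycle \ref{I} $\Rightarrow$ \ref{II} $\Rightarrow$ \ref{III} via the preimage computation, then \ref{III} $\Rightarrow$ \ref{I} and \ref{III} $\Rightarrow$ \ref{IIII}/\ref{IIII'} via Theorem~\ref{theorem:sikorski}, Proposition~\ref{prop:bilip}, Proposition~\ref{full_baire}, and Lemmas~\ref{fullarecont} and~\ref{lem:comp_full_lip}, exactly as in the paper. The only cosmetic difference is that you embed \( Y \) explicitly through Proposition~\ref{prop:bilip} and track the Lipschitz constant \( 1 \) to get \( \rho_\alpha = r_\alpha \), whereas the paper simply takes \( Y \) superclosed in \( \pre{\l}{\l} \) without loss of generality and is content with an arbitrary constant \( \rho_\alpha \in \GG^+ \), which is all that item~\ref{IIII} requires.
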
 

Notice that because of Lemma~\ref{fullarecont}, condition~\ref{V} can be added to the list also when \( X \) is \( \GG \)-ultrametric and \( Y \) is a \( \GG \)-metric space \emph{with bounded diameter}, independently of whether \( \l \) is strong limit or not.

\begin{proof}
The implication \ref{I} $\Rightarrow$ \ref{II} is obvious. To see \ref{II} $\Rightarrow$ \ref{III}, suppose that \( f \) is the \( D\)-limit of a family \( (f_d)_{d \in D} \) of continuous functions \( f_d \colon X \to Y \), for some \( D \in \D_\l \). Since \(Y\) is zero-dimensional, it is enough to check that \( f^{-1}(U) \in \Sii{0}{2}(X) \) for any clopen set \( U \subseteq Y \), which is the case because \( f^{-1}(U) = \bigcup_{ d \in D } \bigcap_{ d' \geq d } f_{d'}^{-1}(U) \).
To prove \ref{III} \( \Rightarrow \) \ref{I}, first notice that by Theorem~\ref{theorem:sikorski}  we can assume that \( X,Y \subseteq \pre{\l}{\l} \) with \( Y \) superclosed: applying Proposition~\ref{full_baire} and using Lemma~\ref{fullarecont}, we get the desired result.

Suppose now that \( (X,d) \) is a \( \GG \)-ultrametric space. 
Without loss of generality, we can again assume that \( Y \) is a superclosed subset of \( \pre{\l}{\l} \).
By Proposition~\ref{prop:bilip} there is a topological embedding \( h \colon X \to \pre{\l}{\l} \) which is Lipschitz. Let \( X' = \ran(h) \). Since the map \( f \circ h^{-1}\) is \( \Sii{0}{2}\)-measurable, by Proposition~\ref{full_baire}, it is the \( \l \)-limit of a sequence \( (f'_\a)_{\a < \l} \) of \( \l \)-full (or \( < \l \)-full, if \( \l \) is strong limit) functions \( f'_\a \colon X' \to Y \) with constant \( r_\a \). Then  by Lemma \ref{lem:comp_full_lip} each \( f_\a = f'_\a \circ h \colon X \to Y \) is \( \l \)-full (or \( < \l \)-full) with constant \( \rho_\a \) for some \( \rho_\a \in \GG^+ \), and \( f  = \lim_{\a < \l} f_\a\). This proves~\ref{III} \( \Rightarrow \) \ref{IIII}, and also~\ref{III} \( \Rightarrow \) \ref{IIII'} if \( \l \) is strong limit. 
Lemma~\ref{fullarecont} yields \ref{IIII} \( \Rightarrow \) \ref{I} and \ref{IIII'} \( \Rightarrow \) \ref{I}. 

Finally, assume that \( \l \) is strong limit, \( X \) is \( \GG \)-ultrametric, and
 \( Y \) is a \( \GG \)-metric space.
Every \( < \l \)-full function from \( X \) to \( Y \) has bounded range because the order of the field \( \GG \) has cofinality \( \l \), therefore \ref{IIII'} \( \Rightarrow \) \ref{V} by Lemma~\ref{fullarecont} again. 
Finally, \ref{V} \( \Rightarrow \) \ref{I} because Lipschitz functions are obviously continuous.
\end{proof}

\section{Level-by-level analysis of generalized Borel functions} \label{sec:long}

The goal of this section is to isolate the ``right'' notion of Baire class \( \xi \) function in the generalized context. More in detail, we seek a stratification of \( \l^+ \)-Borel measurable functions defined in terms of suitable limit operators, which starts from continuous functions and is ``compatible'' with \( \Sii{0}{\xi} \)-measurability in the sense of Theorem~\ref{thm:Bairestratification-classical}. 
We first deal with regular cardinals, as in this case we can exploit the results of Section~\ref{sec:baire_class_1} through a suitable change-of-topology argument. After that, we briefly discuss what needs to be modified in the singular case in order to get analoguous definitions and results.

\subsection{The regular case} \label{subsec:longregular}

The results from Section~\ref{sec:baire_class_1} allow us to obtain structural information on the stratification in terms of measurability of $\l^+$-Borel functions.
Indeed, using also Lemma~\ref{lem:changetopnew} we can prove that if $\xi$ is a successor ordinal or a limit ordinal of cofinality $\l$, then $\Sii{0}{\xi+1}$-measurable functions can be characterized as $\l$-limits of functions from lower levels (see Theorem~\ref{thm:measurable_limits}).

\begin{lemma}\label{lem:changetopnew}
Let \(\l\) be regular, \( X  \in \metr \), and  \(1 \leq \xi<\l^+\). For every family \((A_i)_{i<\l}\) of \(\Sii{0}{\xi}\)-subsets of \(X\) there exist \(D \subseteq \pre{\l}{\l}\) and a bijection \(h \colon D  \to X\) such that:
\begin{enumerate-(1)-r}
\item \label{lem:changetop1} 
\(h\) is continuous;
\item \label{lem:changetop2}
for every \( s \in \pre{\l}{\l}\), one of the following holds, depending on the given \( \xi \):
\begin{description}
\item[Case 1]
if \(\xi = \xi'+1 > 1\) is successor, then
\(h(\Nbhd_s \cap D) \in \Pii{0}{\xi'} \);
\item[Case 2]
if \(\xi\) is limit with \(\cof(\xi) = \l\), then
\(h(\Nbhd_s \cap D) \in \Pii{0}{\nu(\leng(s))} \), for some fixed non-decreasing  function \(\nu \colon \l \to \xi\);
\item[Case 3]
if \(\xi\) is limit with \( \cof(\xi) < \l\), then
\(h(\Nbhd_s \cap D) \in \Pii{0}{\xi} \);
\end{description}
\item \label{lem:changetop3} 
\(h^{-1}(A_i) \in \Sii{0}{1}(D)\) for every \(i<\l\).
\end{enumerate-(1)-r}
\end{lemma}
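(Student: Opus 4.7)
The plan is a scheme construction. I will build a family $(C_s)_{s\in\pre{<\l}{\l}}$ of subsets of $X$ in the prescribed pointclass, take $D$ to consist of the branches $\tilde x\in\pre{\l}{\l}$ for which $\bigcap_{\a<\l} C_{\tilde x\restriction\a}$ is a singleton, and define $h(\tilde x)$ to be the unique element of that intersection. Since $\l$ is regular uncountable we have $\mu=\l>\o$, so by Theorem~\ref{theorem:sikorski} I may assume $X$ is $\l$-ultrametric, and fix a basis $\{U_k:k<\l\}$ of clopen balls whose diameters decrease cofinally in $\GG^+$.

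First I decompose each $A_i$ via Lemma~\ref{lem:reg}: in Case~1 write $A_i=\bigcup_{j<\l}B_{i,j}$ with $B_{i,j}\in\Pii{0}{\xi'}$; in Case~2 fix a strictly increasing sequence $(\xi_j)_{j<\l}$ cofinal in $\xi$ and write $A_i=\bigcup_{j<\l}B_{i,j}$ with $B_{i,j}\in\Dee{0}{\xi_j+2}$; in Case~3 write $A_i=\bigcup_{j<\cof(\xi)}B_{i,j}$ with $B_{i,j}\in\Pii{0}{\xi_{i,j}}$ for some $\xi_{i,j}<\xi$. Then I enumerate the basic opens $U_k$ together with all the $B_{i,j}$'s as a single sequence $(E_\a)_{\a<\l}$ of ``tasks'', arranged so that every such set appears cofinally; in Case~2 I further schedule the tasks so that the complexity of the $B_{i,j}$ treated at stage $\a$ is bounded by a prescribed non-decreasing $\nu\colon\l\to\xi$ with $\sup_{\a<\l}\nu(\a)=\xi$.

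Now I set $C_\emptyset=X$ and, at limit length, $C_s=\bigcap_{\b<\leng(s)}C_{s\restriction\b}$. At a successor stage $\a=\b+1$, for each nonempty $C_t$ at level $\b$ I split according to $E_\b$: if $E_\b=U_k$, I let $C_{t\conc 0}=C_t\cap U_k$, $C_{t\conc 1}=C_t\setminus U_k$, and $C_{t\conc i}=\emptyset$ for $i\geq 2$; if $E_\b=B$ is some $B_{i,j}$, I let $C_{t\conc 0}=C_t\cap B$ and use Lemma~\ref{lem:reg}\ref{lem:reg1}--\ref{lem:reg2} to partition $C_t\setminus B$ as a disjoint union $\bigsqcup_{k<\l}F_k$ with each $F_k$ in the corresponding $\Dee{0}{?}$ class, taking $C_{t\conc(1+k)}=F_k$. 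A straightforward induction on $\leng(s)$ then gives property~(2): all building blocks ($U_k$, $X\setminus U_k$, $B$, $F_k$) lie in the prescribed pointclass, which is closed under $\l$-intersections at limit stages, and the schedule of Case~2 enforces the $\Pii{0}{\nu(\leng(s))}$-bound.

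Since the splittings at successor stages are pairwise disjoint and partition each $C_t$, and basic opens appear cofinally, every $x\in X$ admits a unique code $\tilde x\in\pre{\l}{\l}$ with $x\in C_{\tilde x\restriction\a}$ for every $\a<\l$; then $h(\tilde x)=x$ defines a bijection $D\to X$ with $h(\Nbhd_s\cap D)=C_s$, establishing~(2). Continuity of $h$ (property~(1)) follows from the diameters of the $C_s$ shrinking to zero along branches. For~(3), if $\a_{i,j}$ denotes the successor stage at which $B_{i,j}$ is processed, then $h^{-1}(B_{i,j})=D\cap\{y\in\pre{\l}{\l}:y(\a_{i,j})=0\}$ is clopen in $D$, hence $h^{-1}(A_i)=\bigcup_j h^{-1}(B_{i,j})$ is open. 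The main technical hurdle is the successor step when $E_\b$ is a $B_{i,j}$: disjointly partitioning the ``negative'' piece $C_t\setminus B$ into pieces remaining in the prescribed pointclass is precisely where the fine disjoint decompositions of $\Sii{0}{\xi}$-sets from Lemma~\ref{lem:reg} are indispensable, and in Case~1 with $\xi'=1$ one additionally invokes the fact that open subsets of a $\l$-ultrametric space decompose as disjoint unions of clopen balls.
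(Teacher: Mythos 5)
Your proposal is correct and follows essentially the same route as the paper's proof: a Luzin-scheme construction of a disjoint refining family $(C_s)_{s\in\pre{<\l}{\l}}$, with the induced continuous bijection $h$ from the branch space, using Lemma~\ref{lem:reg} for the disjoint decompositions at successor stages and a non-decreasing bookkeeping function $\nu$ in the $\cof(\xi)=\l$ case. The only (immaterial) organizational difference is that the paper shrinks diameters by intersecting with \emph{all} basic open sets $\Nbhd_t$ at every successor stage, whereas you interleave the basic opens into the task enumeration.
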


\begin{proof}
Without loss of generality, we can assume that \(X \subseteq \pre{\l}{\l}\) by Theorem~\ref{theorem:sikorski}.
Let \((C_{\g})_{\g<\l} \) be a family of sets in \(\bigcup_{\xi'<\xi}\Pii{0}{\xi'}\) such that  \(A_i=\bigcup_{\g \in K_{i}}C_{\g}\) for some \(K_{i} \subseteq \l\). 

We construct a family \(\{B_s \mid s \in \pre{<\l}{\l}\}\) of subsets of \(X\) such that for every \( \a,\g < \l \) and \( s \in \pre{<\l}{\l}\):
\begin{enumerate-(i)}
\item\label{lem:1} 
\( B_s \subseteq B_t\) whenever \( t \subseteq s \);
\item\label{lem:2} 
\( \{ B_t \mid \leng(t) = \a \} \) is a disjoint covering of \( X \);
\item\label{lem:3} 
\(B_s \subseteq \Nbhd_{t_s} \cap X\) for some \(t_s \in \pre{<\l}{\l}\) with \(\leng(t_s)=\leng(s)\);
\item\label{lem:4}    
\(B_s \in \Pii{0}{\xi'}\) if \(\xi = \xi'+1>1\) is a successor ordinal;  \(B_s \in \Pii{0}{\nu(\leng(s))}\) for a suitable non-decreasing map \(\nu \colon \l \to \xi\) if \(\xi\) is limit ordinal of cofinality \(\l\); \(B_s \in \Pii{0}{\xi}\) if \(\xi\) is a limit ordinal of cofinality smaller than \( \l\);
\item\label{lem:5}  
\(C_{\g}=\bigcup_{t \in K}B_t\)  for some \(K \subseteq \pre{\g+1}{\l}\).
\end{enumerate-(i)}

Consider the induced map $h \colon D \subseteq \pre{\l}{\l} \to X$, \(x \mapsto h(x) \in \bigcap_{i<\l}B_{x \restriction i} \), where \(D=\{ x \in \pre{\l}{\l} \mid  \bigcap_{i<\l}B_{x \restriction i} \neq \emptyset \}\). 
Note that \(h\) is well-defined and continuous (so that~\ref{lem:changetop1} is satisfied) by~\ref{lem:1} and~\ref{lem:3}. Condition~\ref{lem:2} entails that \( h \) is a bijection and that \( h(\Nbhd_s \cap D) = B_s \), so that~\ref{lem:changetop2} is satisfied as well by~\ref{lem:4}.
Finally, observe that by~\ref{lem:5} and~\ref{lem:2} applied to \( \a = \g+1 \),
\begin{align*}
h^{-1}(C_{\g}) &=\bigcup\nolimits_{t \in K}h^{-1}(B_t)=\bigcup\nolimits_{t \in K}(\Nbhd_t \cap D) \\
h^{-1}(X \setminus C_{\g})&=\bigcup\nolimits_{t \in \pre{\g+1}{\l}\setminus K} h^{-1}(B_t) = \bigcup\nolimits_{t \in \pre{\g+1}{\l} \setminus K}(\Nbhd_t \cap D).
\end{align*}
Hence \( h^{-1}(C_\g ) \) is clopen for every \( \g < \l \), so that~\ref{lem:changetop3} follows by the choice of the sets \( C_\g \).

The construction of the family \(\{B_s \mid s \in \pre{<\l}{\l}\}\) is by induction on \(\a=\leng(s) <\l\); when \( \xi \) is a limit ordinal of cofinality \( \l \), along the recursive process we also define the value \( \nu(\a) \) of the function \( \nu \colon \l \to \xi\) on input \( \a \). We set \(B_\emptyset=X\) and, if \( \xi \) is limit of cofinality \( \l \), \( \nu(0) = 1 \).

Suppose first that \(\a=\g+1\) is a successor ordinal. 
Let \( \bar \xi < \xi \) be such that \( C_\g \in \Pii{0}{\bar \xi} \).
By Lemma~\ref{lem:reg} (applied to \( X \setminus C_\g \)),
there are \( J \leq \l \) and a \( \Pii{0}{\xi} \)-partition \( \C_\g = \{ C^{(j)}_\g \mid j < J \}\) of \( X \) such that \( C^{(0)}_\g = C_\g \). 
For each \( t \in \pre{\a}{\l} \), \( j < J \), and \( s \in \pre{\g}{\l} \), let \( P_{t,j,s} = \Nbhd_t \cap C^{(j)}_\g \cap B_s\). 
By inductive hypothesis, \( \{ B_s \mid s \in \pre{\g}{\l} \} \) is a disjoint covering of \( X \), hence so is 
\[
\P=\{ P_{t,j,s} \mid t \in \pre{\a}{\l}, j < J, s \in \pre{\g}{\l}\}.
\]
For every  \(s \in \pre{\g}{\l}\), 
fix a bijection \( f_s \colon \lambda \to \pre{\a}{\l} \times J \times \{ s \} \) and set \( B_{s \conc \b} = P_{f_s(\b)} \) for all \( \b < \l \).
We know check that all of~\ref{lem:1}--\ref{lem:5} are satisfied. 
Condition~\ref{lem:1} is obvious, while condition~\ref{lem:2} follows from the fact that \( \{ B_t \mid \leng(t) = \a \} = \P \).
Condition~\ref{lem:3} holds by construction, as it is enough to let \( t_{s \conc \b} \) be the first coordinate of \( f_s(\b) \), for every \( s \in \pre{\g}{\l} \) and \( \b < \l \).
We now check~\ref{lem:4}. 
If $\xi = \xi'+1>1$ is a successor ordinal, then by inductive hypothesis \(B_s \in  \Pii{0}{\xi'} \) for every \( s \in \pre{\g}{\l} \), and since \(C^{(j)}_\g \in \Pii{0}{\bar \xi }\) and \( \bar \xi \leq \xi' \), we conclude \(B_{s \conc \b} \in \Pii{0}{\xi'}\). 
If \(\xi\) is a limit ordinal of cofinality \(\l\), set \( \nu(\a)=\max\{\nu(\g), \bar \xi \}<\xi \). By inductive hypothesis again, \(B_s \in  \Pii{0}{\nu(\g)} \subseteq \Pii{0}{\nu(\a)} \) for every \( s \in \pre{\g}{\l }\), and \(C^{(j)}_\g \in \Pii{0}{\bar \xi} \subseteq \Pii{0}{\nu(\a)}\): hence \( B_{s \conc \b} \in \Pii{0}{\nu(\a)} \).
If instead \(\xi\) is a limit ordinal of cofinality smaller than \( \l\), then by inductive hypothesis \(B_s \in  \Pii{0}{\xi} \) and  \(C^{(j)}_\g \in \Pii{0}{\bar \xi} \subseteq \Pii{0}{\xi}\), hence \(B_{s \conc \b} \in \Pii{0}{\xi}\).
Finally, using that \( \P \) is a disjoint covering of \( X \), it is easy to verify that~\ref{lem:5} holds by taking \(K=\{s \conc \b \mid s \in \pre{\g}{\l} \wedge  B_{s \conc \b}\subseteq C^{(0)}_\g\}\). 

Now let \(\a>0\) be a limit ordinal. 
For every \(s \in \pre{\a}{\l}\), set \(B_s=\bigcap_{\b<\a}B_{s \restriction \b}\).
Then~\ref{lem:1} is satisfied by construction, while~\ref{lem:2} holds because by inductive hypothesis it is satisfied at all levels \( \b < \a \). Condition~\ref{lem:3} is satisfied by setting \( t_s = \bigcup_{\b < \a} t_{s \restriction \b } \).
To prove~\ref{lem:4} we again consider various cases. 
If \(\xi = \xi' + 1\) is a successor ordinal, then by inductive hypothesis \(B_{s \restriction \b} \in \Pii{0}{\xi'}\) for every \(\b<\a\), hence \(B_s=\bigcap_{\b<\a}B_{s \restriction \b} \in \Pii{0}{\xi'}\) as well. 
If \(\xi\) is a limit ordinal of cofinality \(\l\), then we set \( \nu(\a) = \sup_{\b < \a} \nu(\b) \): since \( \cof(\xi) = \l \), we still have \( \nu(\a) < \xi \). Then by inductive hypothesis \(B_{s \restriction \b} \in \Pii{0}{\nu(\b)} \subseteq \Pii{0}{\nu(\a)} \), hence \(B_s =\bigcap_{\b<\a}B_{s \restriction \b} \in \Pii{0}{\nu(\a)}\) too. Finally, if \(\xi\) is a limit ordinal with \( \cof(\xi) <\l\), then \( B_{s \restriction \b} \in \Pii{0}{\xi} \) for every \( \b < \a \), and thus \(B_s=\bigcap_{\b<\a}B_{s \restriction \b} \in \Pii{0}{\xi}\) as well.
Condition~\ref{lem:5} is relevant only at successor stages, and hence our proof is complete.
\end{proof}

\begin{corollary}\label{cor_changetopnew} 
Let \(\l\) be regular, \((X,\tau) \in \metr\), and \( 1 \leq \xi<\l^+\). 
Given a family \((A_i)_{i<\l}\) of \(\Sii{0}{\xi}(\tau)\) subsets of \(X\),  there is a topology \(\tau'\) on X such that:
\begin{enumerate-(1)}
\item 
\((X,\tau') \in \metr \); 
\item 
\(\tau \subseteq \tau' \subseteq \Sii{0}{\xi}(\tau)\) if \( \xi \) is either a successor ordinal or a limit ordinal with cofinality \( \l \), and \(\tau \subseteq \tau' \subseteq \Sii{0}{\xi+1}(\tau)\) otherwise;
\item \label{cor_changetop3}
\(A_i \in \Sii{0}{1}(\tau')\) for every \(i<\l\).
\end{enumerate-(1)}
\end{corollary}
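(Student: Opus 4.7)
The plan is to apply Lemma~\ref{lem:changetopnew} to the given family $(A_i)_{i<\l}$ and then define $\tau'$ as the pushforward of the bounded topology on the resulting set $D \subseteq \pre{\l}{\l}$ under the bijection $h$. More precisely, we let $\tau' = \{ h(U) \mid U \in \Sii{0}{1}(D)\}$, so that $h \colon D \to (X,\tau')$ becomes a homeomorphism by construction.

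Since $D \subseteq \pre{\l}{\l}$, Theorem~\ref{theorem:sikorski} (or Theorem~\ref{theorem:sikorski2} if $\mu = \o$) gives $(D,\tau_b) \in \metr$, and hence $(X,\tau') \in \metr$ as well. The inclusion $\tau \subseteq \tau'$ follows from continuity of $h$, item~\ref{lem:changetop1} of Lemma~\ref{lem:changetopnew}: indeed, $U \in \tau$ implies $h^{-1}(U) \in \Sii{0}{1}(D)$, so $U = h(h^{-1}(U)) \in \tau'$. Condition~\ref{cor_changetop3} is immediate from item~\ref{lem:changetop3} of the lemma, as $h^{-1}(A_i) \in \Sii{0}{1}(D)$ yields $A_i = h(h^{-1}(A_i)) \in \tau'$.

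The only point requiring real care is the upper bound on $\tau'$. A natural basis of $\tau'$ is $\B = \{ h(\Nbhd_s \cap D) \mid s \in \pre{<\l}{\l}\}$, which has size at most $\l^{<\l} = \l$; hence every element of $\tau'$ is a union of at most $\l$-many elements of $\B$. By item~\ref{lem:changetop2} of Lemma~\ref{lem:changetopnew}, each element of $\B$ lies in $\Pii{0}{\xi'}(\tau)$ if $\xi = \xi'+1$ is successor, in $\bigcup_{\xi'<\xi}\Pii{0}{\xi'}(\tau)$ if $\xi$ is limit of cofinality $\l$ (using that $\nu$ takes values below $\xi$), and in $\Pii{0}{\xi}(\tau)$ if $\xi$ is limit of cofinality smaller than $\l$. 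Taking unions of $\l$-many such sets, we conclude that $\tau' \subseteq \Sii{0}{\xi}(\tau)$ in the first two cases, and $\tau' \subseteq \Sii{0}{\xi+1}(\tau)$ in the third.

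The main obstacle is really absorbed into Lemma~\ref{lem:changetopnew}: the delicate book-keeping of the complexities of the sets $B_s$ along the construction (and the definition of the auxiliary function $\nu$ in the limit-cofinality-$\l$ case) is precisely what guarantees the sharp level count here. Once that lemma is available, the corollary reduces to the routine verification above.
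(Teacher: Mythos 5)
Your proposal is correct and follows exactly the paper's proof, which simply declares $\tau'$ to be the pushforward via $h$ of the subspace topology on $D$ and leaves the verification implicit; your spelled-out check of the complexity bound on $\tau'$ (counting $\l$-many basic sets of the appropriate $\Pii{0}{\cdot}$ complexity from Lemma~\ref{lem:changetopnew}\ref{lem:changetop2}) is the right computation. One negligible remark: since $\l$ is assumed regular and uncountable here, $\mu=\l>\omega$, so the appeal to Theorem~\ref{theorem:sikorski2} for the case $\mu=\omega$ is vacuous.
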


\begin{proof}
Let \( h \colon D \to X \) be as in Lemma~\ref{lem:changetopnew}. Then
it is enough to let \(\tau'\) be the pushforward via \(h\) of the topology induced by \(\pre{\l}{\l}\) on \( D \).
\end{proof}

\begin{theorem}\label{thm:measurable_limits}
Let \( \l \) be regular, \( X , Y \in \metr \), and suppose that \( Y \) is spherically complete. Let  \( 1 \leq \xi < \l^+ \) be either a successor ordinal or a  limit ordinal of cofinality \(\l\).
Then
\[
\M_{\xi+1}(X,Y) = \llim{\l} \M_\xi(X,Y).
\]
If \(\xi\) is limit, then we further have 
\[
\M_{\xi+1}(X,Y) = \llim{\l} \M_{<\xi}(X,Y).
\]
\end{theorem}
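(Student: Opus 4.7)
My plan is to split the theorem into the two inclusions and tackle the reverse inclusion by a change-of-topology argument that reduces everything to the already-established Baire class~1 case (Theorem~\ref{car_baireclass1}).

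For the forward inclusion \( \llim{\l}\M_\xi(X,Y)\subseteq\M_{\xi+1}(X,Y) \) (and the analogue with \( \M_{<\xi} \) when \( \xi \) is limit): since \( \l \) is regular and uncountable, \( Y \) is \( \l \)-ultrametrizable and hence zero-dimensional, so it suffices to test preimages of clopen \( V\subseteq Y \). Writing
\[
f^{-1}(V)=\bigcup_{\a<\l}\bigcap_{\b\geq\a}f_\b^{-1}(V),
\]
each \( f_\b^{-1}(V) \) is in \( \Pii{0}{\xi} \) (resp.\ in \( \Pii{0}{\xi_\b} \) for some \( \xi_\b<\xi \)) because \( V \) is clopen; a \( \l \)-intersection of such sets lands in \( \Pii{0}{\xi} \), and then a \( \l \)-union puts the whole expression in \( \Sii{0}{\xi+1} \).

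For the reverse inclusion \( \M_{\xi+1}(X,Y)\subseteq\llim{\l}\M_\xi(X,Y) \), fix \( f\in\M_{\xi+1}(X,Y) \) and a basis \( (V_i)_{i<\l} \) of \( Y \). Since \( \xi+1 \) is always a successor, Lemma~\ref{lem:reg} writes \( f^{-1}(V_i)=\bigcup_{\a<\l}B_{i,\a} \) with \( B_{i,\a}\in\Pii{0}{\xi}(\tau) \). Applying Corollary~\ref{cor_changetopnew} to the family \( \{X\setminus B_{i,\a}\mid i,\a<\l\} \) of \( \Sii{0}{\xi}(\tau) \)-sets, and using the hypothesis on \( \xi \) (successor, or limit of cofinality \( \l \)), I obtain a refined topology \( \tau'\supseteq\tau \) with \( (X,\tau')\in\metr \), \( \tau'\subseteq\Sii{0}{\xi}(\tau) \), and each \( B_{i,\a} \) \( \tau' \)-closed. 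Consequently \( f\colon(X,\tau')\to Y \) is \( \Sii{0}{2}(\tau') \)-measurable, and since \( (X,\tau') \) is \( \GG \)-ultrametrizable by Theorem~\ref{theorem:sikorski}, Theorem~\ref{car_baireclass1} furnishes \( \l \)-full functions \( f_\a\colon X\to Y \) with constants \( \rho_\a\in\GG^+ \) satisfying \( f=\lim_{\a<\l}f_\a \). The equality remains valid with respect to \( \tau \), since convergence is computed in \( Y \).

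The delicate point is to bound the \( \tau \)-complexity of the \( f_\a \). Using the embedding \( h\colon D\to X \) supplied by Lemma~\ref{lem:changetopnew}, the \( \rho_\a \)-balls of \( \tau' \) are exactly sets of the form \( h(\Nbhd_s\cap D) \) for \( s \) of a uniform length \( \b_\a<\l \); by Lemma~\ref{lem:changetopnew}\ref{lem:changetop2} these lie in \( \Pii{0}{\xi'}(\tau) \) if \( \xi=\xi'+1 \), and in \( \Pii{0}{\nu(\b_\a)}(\tau) \) with \( \nu(\b_\a)<\xi \) if \( \xi \) is limit of cofinality \( \l \). Since each \( f_\a \) is locally constant on a partition of \( X \) into at most \( \l \)-many such pieces, its preimages are \( \l \)-unions of pieces, which places \( f_\a \) in \( \M_\xi(X,Y) \) in the successor case and in \( \M_{\nu(\b_\a)+1}(X,Y)\subseteq\M_{<\xi}(X,Y) \) in the limit case (using \( \nu(\b_\a)+1<\xi \) because \( \xi \) is limit). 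The main obstacle is precisely this last step in the limit case: getting \( f_\a\in\M_{<\xi} \) rather than merely \( \M_\xi \) hinges on the fact that \( \cof(\xi)=\l \) forces the map \( \nu \) of Lemma~\ref{lem:changetopnew} to take values strictly below \( \xi \); for limit \( \xi \) of smaller cofinality, Corollary~\ref{cor_changetopnew} would only deliver \( \tau'\subseteq\Sii{0}{\xi+1}(\tau) \), and the reduction would collapse.
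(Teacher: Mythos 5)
Your proposal is correct and follows essentially the same route as the paper: the same computation \( f^{-1}(V)=\bigcup_{\a}\bigcap_{\b\geq\a}f_\b^{-1}(V) \) for the easy inclusion, and for the converse the same change-of-topology reduction via Corollary~\ref{cor_changetopnew}/Lemma~\ref{lem:changetopnew} to the \( \Sii{0}{2} \)-case, with the complexity of the approximants controlled through the pieces \( h(\Nbhd_s\cap D) \) and the bound \( \nu(\b_\a)<\xi \) exactly as in the paper's limit-cofinality-\( \l \) argument. The only (immaterial) difference is that in the successor case the paper concludes more directly — the \( f_\a \) are \( \tau' \)-continuous and \( \tau'\subseteq\Sii{0}{\xi}(\tau) \) already gives \( f_\a\in\M_\xi(X,Y) \) without analyzing the balls — whereas you run the full-function analysis uniformly in both cases.
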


\begin{proof} 
For the right-to-left inclusions, consider \( f  = \lim_{\a < \l} f_\a \) with \( f_\a \in \M_{\xi}(X,Y) \) or \( f_\a \in \M_{< \xi}(X,Y) \), depending on whether \( \xi \) is successor or limit. Fix a clopen basis \((U_i )_{i<\l}\) for \( Y \): it is enough to show that \( f^{-1}(U_i) \in \Sii{0}{\xi+1}(X) \) for every \( i < \l \).  As \( U_i \) is clopen, we have
\[  
f^{-1}(U_i) = \bigcup_{\bar \a < \l } \bigcap_{\bar \a \leq \a < \l} f_{\a}^{-1}(U_i),
\]
and since in any case \( f^{-1}_\a(U_i) \in \Dee{0}{\xi}(X) \), 
the result easily follows.

We now deal with the inclusion from left to right. 
Let \(\tau\) be the topology of \(X\).
The proof is by induction on $1 \leq \xi< \l^+$. The base case $\xi=1$ is Theorem \ref{car_baireclass1}, therefore we can move to the case $\xi>1$. 

Assume first that $\xi$ is a successor ordinal: in this case, we want to show that $f$ is a \(\l\)-limit of $\Sii{0}{\xi}$-measurable measurable functions \(f_\a \colon  X \to Y \). 
As \( f \in \M_{\xi+1}(X,Y) \), for every open set $U\subseteq Y$ we can write $f^{-1}(U)=\bigcup_{i<\l} A_{i}$ with $A_{i} \in \Pii{0}{\xi}(X)$. 
Applying Corollary~\ref{cor_changetopnew} to the family $(X \setminus A_{i})_{i<\l}$, we obtain a topology $\tau'$ on $X$ such that \( (X,\tau') \in \metr\), \(\tau \subseteq \tau' \subseteq \Sii{0}{\xi}(\tau)\), and \(A_{i} \in \Pii{0}{1}(\tau')\) for every \(i<\l\). 
Then $f$ is $\Sii{0}{2}(\tau')$-measurable: applying Theorem~\ref{car_baireclass1}, we get that \( f = \lim_{\a < \l} f_a \) with each \( f_\a \colon (X,\tau') \to Y \) continuous. Since \(\tau^\prime \subseteq \Sii{0}{\xi}(\tau)\), the functions \( f_\a \) are \( \Sii{0}{\xi} \)-measurable as maps between \( (X,\tau) \) and \( Y \), hence we are done.

Now assume that $\xi$ is a limit ordinal with $\cof(\xi)=\l$:
we need to show that $f$ is a $\l$-limit of a sequence of functions \(f_\a \colon  X \to Y \), where each \(f_\a\) is \(\Sii{0}{\xi_\a}\)-measurable for some \(\xi_\a<\xi\). Without loss of generality, we may assume that \( Y \) is a superclosed subspace of \( \pre{\l}{\l} \) (Theorem~\ref{theorem:sikorski}).
Fix a basis $(U_j)_{j<\l}$ of $Y$, and let \( \{ A_{i} \mid i < \l \} \subseteq \Sii{0}{\xi}(X) \) be such that for every \( j < \l \), \( f^{-1}(U_j) = \bigcup_{i \in K_j} (X \setminus A_i) \) for some \( K_j \subseteq \l \). 
By Lemma~\ref{lem:changetopnew}, there are a non-decreasing function \( \nu \colon \l \to \xi \), a subspace \( D \subseteq \pre{\l}{\l} \), and a continuous bijection \(h \colon D \to X \) such that \(h^{-1}(A_i) \in \Sii{0}{1}(D)\) for every \(i<\l\), and moreover
\(h(\Nbhd_s \cap D) \in \Pii{0}{\nu(\leng(s))} \) for every \(s \in \pre{<\l}{\l}\).
By the choice of the sets \( A_i \),
the function \(f \circ h \colon D \to Y\) is thus \(\Sii{0}{2}(D)\)-measurable.
By Proposition~\ref{full_baire}, there is a family of functions \( (g_\a)_{\a < \l } \) such that \( f \circ h = \lim_{\a < \l} g_\a \) and each \( g_\a \colon D \to Y \) is $\l$-full with constant $r_\a$: set \( f_\a = g_\a \circ h^{-1} \).
Clearly, \( f = \lim_{\a < \l} f_\a  \). Since \( g_\a \) is \( \l \)-full with constant \( r_\a \), the range of \( g_\a \) consists of (at most) \( \l \)-many points \( \{ y_i \mid i < \l \} \subseteq Y \) such that for every \( i < \l \) 
\[ 
g_\a^{-1}(y_i) = \bigcup \{ \Nbhd_t \cap D \mid t \in L_i \},
\]
for some \( L_i \subseteq \pre{\a}{\l} \). By the choice of \( h \), it follows that \( f_\a^{-1}(y_i) = h(g_\a^{-1}(y_i)) \in \Sii{0}{\nu(\a)+1}(X) \). Since \( \ran(f_\a) = \ran(g_\a) \) has at most \( \l \)-many elements, \( f_\a \) is thus \( \Sii{0}{\xi_\a} \)-measurable for \( \xi_\a = \nu(\a)+1 < \xi \).
\end{proof}

\begin{corollary} \label{cor:measurable_limits}
Let \( \l \) be regular, \( X,Y \in \metr \), and suppose that \( Y \) is spherically complete. Then the closure under \( \l \)-limits of the collection of all continuous functions is precisely \( \M_{< \o}(X,Y) \).

More generally, for every \( 1 \leq \xi < \l^+ \) which is either a successor ordinal, or a limit ordinal of cofinality \( \l \), the closure of \( \M_\xi(X,Y) \) under \( \l \)-limits is \( \M_{< \xi+\o}(X,Y) \).
\end{corollary}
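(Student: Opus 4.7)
The plan is to deduce this corollary by iterating Theorem~\ref{thm:measurable_limits}. First I would observe that the first assertion is the special case \( \xi = 1 \) of the general one, since \( \M_1(X,Y) \) is exactly the class of continuous functions and ordinal addition gives \( 1 + \omega = \omega \). So I would focus on proving the general statement, and denote by \( \K \) the closure of \( \M_\xi(X,Y) \) under pointwise \( \l \)-limits.

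For the inclusion \( \K \subseteq \M_{<\xi+\omega}(X,Y) \), it is enough to check that \( \M_{<\xi+\omega}(X,Y) \) is itself closed under \( \l \)-limits, since it trivially contains \( \M_\xi(X,Y) \). This is a direct application of Proposition~\ref{prop:long_limits_counterex} to the limit ordinal \( \xi + \omega \) and \( \k = \l \): indeed \( \cof(\xi+\omega) = \omega < \l = \cof(\l) \), so any \( \l \)-limit of functions in \( \M_{<\xi+\omega}(X,Y) \) stays in \( \M_{<\xi+\omega}(X,Y) \).

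For the reverse inclusion I would argue by induction on \( n < \omega \) that \( \M_{\xi+n}(X,Y) \subseteq \K \); taking the union over \( n \) will then yield \( \M_{<\xi+\omega}(X,Y) \subseteq \K \). The base case \( n = 0 \) is trivial. For the inductive step, Theorem~\ref{thm:measurable_limits} applied at the ordinal \( \xi+n \) gives
\[
\M_{\xi+n+1}(X,Y) = \llim{\l} \M_{\xi+n}(X,Y) \subseteq \llim{\l} \K \subseteq \K .
\]
This invocation is legal because Theorem~\ref{thm:measurable_limits} requires its input to be either a successor ordinal or a limit ordinal of cofinality \( \l \): for \( n = 0 \) this is guaranteed by the hypothesis on \( \xi \), while for \( n \geq 1 \) the ordinal \( \xi + n \) is automatically a successor, independently of the nature of \( \xi \).

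There is no substantial obstacle here, the content being entirely packaged in Theorem~\ref{thm:measurable_limits} and Proposition~\ref{prop:long_limits_counterex}. The only mildly delicate point worth flagging is the observation that the hypothesis on \( \xi \) (successor or limit of cofinality \( \l \)) is needed only to kickstart the induction at \( n = 0 \); from \( n \geq 1 \) onwards the successor case of Theorem~\ref{thm:measurable_limits} always applies, which is why the conclusion is uniform in both cases of \( \xi \).
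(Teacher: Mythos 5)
Your proof is correct and follows essentially the same route as the paper: the forward inclusion via Proposition~\ref{prop:long_limits_counterex} applied to \( \xi+\omega \) and \( \kappa = \lambda \), and the reverse inclusion by the induction on \( n \) using Theorem~\ref{thm:measurable_limits}, whose hypothesis you correctly verify at each stage. The paper merely compresses the induction into one sentence; your write-up supplies the same details.
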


\begin{proof}
Exploiting Theorem~\ref{thm:measurable_limits},
an easy induction shows that \( \M_{<\xi+\o}(X,Y) = \bigcup_{n \in \omega} \M_{\xi+n}(X,Y) \) is contained in the closure under \( \l \)-limits of \( \M_\xi(X,Y) \) . Since \( \cof(\xi+ \o) = \o < \l = \cof(\l) \), the reverse inclusion follows from Proposition~\ref{prop:long_limits_counterex}.
\end{proof}

Theorem~\ref{thm:measurable_limits} tells us that, in order to find an appropriate definition of generalized Baire class functions, we can use \( \l \)-limits at all successor stages and at all limit levels with cofinality \( \l \).
When \( \xi \) is a limit ordinal of cofinality smaller than \( \l \), instead, functions in \( \M_{\xi+1} \) can no longer be characterized as \( \l \)-limits of functions in \(  \M_{< \xi}(X,Y) \) because of Proposition~\ref{prop:long_limits_counterex}, so a different approach is in order. In this situation, Corollary~\ref{cor:doublelimit} suggests to use limits of the form
\[ 
\lim\nolimits_{\Fin_\l} \circ \lim\nolimits_{\cof(\xi)} .
\] 
However, this would be an overdoing: as shown in the next proposition, the inclusion in Corollary~\ref{cor:doublelimit} is in most cases proper, and thus the resulting class would be larger than expected.

\begin{proposition}\label{prop:double_lim_counterex}
Let \( \l \) be regular, \( X , Y \in \metr \), and let \( \xi < \l^+ \) be a limit ordinal.
Suppose that \( A \in \Sii{0}{\xi+1}(X) \setminus \Pii{0}{\xi+1}(X) \) and that there are at least two distinct points \( y_0,y_1 \in Y \). Let \( f \colon X \to Y \) be the ``characteristic function'' of \( A \) defined by \( f(x) = y_0 \) if \( x \in A \), and \( f(x) = y_1 \) otherwise. Then \( f \notin \M_{\xi+1}(X,Y) \), yet
\(f \in \llim{\Fin_\l}\left( \llim{\cof(\xi)} \, \M_{<\xi}(X,Y) \right)   \). In particular,
\[
\M_{\xi+1}(X,Y)  \subsetneq  \llim{\Fin_\l} \left( \llim{\cof(\xi)} \M_{<\xi}(X,Y) \right).
\]
\end{proposition}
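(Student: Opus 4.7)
The first (negative) assertion is nearly immediate. Since $Y$ is Hausdorff, pick an open $V\subseteq Y$ with $y_1\in V$ and $y_0\notin V$. Then $f^{-1}(V)=X\setminus A$, and the hypothesis $A\notin\Pii{0}{\xi+1}(X)$ means exactly that $X\setminus A\notin\Sii{0}{\xi+1}(X)$. Therefore $f\notin\M_{\xi+1}(X,Y)$, which also justifies the ``in particular'' clause once the positive part is established.

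For the positive assertion, I plan to exhibit $f$ as an explicit double limit of $\M_{<\xi}$-functions along the directed sets $\cof(\xi)$ (inner) and $\Fin_\l$ (outer). First, write $A=\bigcup_{i<\l}B_i$ with each $B_i\in\Pii{0}{\xi}(X)$; this is possible because $\xi$ is a limit ordinal, so every $\Pii{0}{\xi'}$-set with $\xi'<\xi$ lies in $\Pii{0}{\xi}$. Next, fix a strictly increasing sequence $(\xi_j)_{j<\cof(\xi)}$ cofinal in $\xi$, and apply the dual of Lemma~\ref{lem:reg}\ref{lem:reg2} (or \cite[Proposition~4.2.1]{DMR} if $\mu=\omega$) to obtain, for each $i<\l$, a \emph{decreasing} representation
\[
B_i=\bigcap_{j<\cof(\xi)} B_{i,j},\qquad B_{i,j}\in\Dee{0}{\xi_j+2}(X),\qquad B_{i,j'}\subseteq B_{i,j}\ \text{for } j\le j'.
\]

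For each $d\in\Fin_\l$ and $j<\cof(\xi)$, define $f_{d,j}\colon X\to Y$ by $f_{d,j}(x)=y_0$ if $x\in\bigcup_{i\in d}B_{i,j}$, and $f_{d,j}(x)=y_1$ otherwise. Since $d$ is finite and $\Dee{0}{\xi_j+2}(X)$ is closed under finite unions, both the level set $\{f_{d,j}=y_0\}$ and its complement lie in $\Dee{0}{\xi_j+2}(X)$, so $f_{d,j}\in\M_{\xi_j+2}(X,Y)\subseteq\M_{<\xi}(X,Y)$. The crucial computation is then the identity
\[
\bigcap_{j<\cof(\xi)}\bigcup_{i\in d} B_{i,j}\;=\;\bigcup_{i\in d}B_i,
\]
which I will verify using the finiteness of $d$ together with the decreasing monotonicity in $j$: the inclusion $\supseteq$ is trivial, while $\subseteq$ follows because if $x$ is in the left-hand side, then for every $j$ the finite set $d$ contains some $i_j$ with $x\in B_{i_j,j}$; finiteness forces some fixed $i\in d$ to appear cofinally, and monotonicity then gives $x\in\bigcap_j B_{i,j}=B_i$. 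Consequently $g_d:=\lim_{j<\cof(\xi)}f_{d,j}$ exists pointwise and equals $y_0$ on $\bigcup_{i\in d}B_i$ and $y_1$ elsewhere. Finally, for the outer $\Fin_\l$-limit: if $x\in A$, pick $i_0$ with $x\in B_{i_0}$ so that $g_d(x)=y_0=f(x)$ for every $d\ni i_0$; if $x\notin A$, then $\bigcup_{i\in d}B_i\subseteq A$ gives $g_d(x)=y_1=f(x)$ for every $d$. Thus $f=\lim_{d\in\Fin_\l}g_d$, completing the construction. The only nontrivial point in the whole argument is the interchange identity above, which is why the decreasing representation provided by Lemma~\ref{lem:reg} is essential: with an arbitrary (non-monotone) sequence $B_{i,j}$, the inner limit $g_d$ need not converge pointwise.
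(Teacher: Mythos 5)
Your argument is correct and follows essentially the same route as the paper: both reduce the statement to realizing the characteristic function of a finite union $\bigcup_{i\in d}B_i$ of $\Pii{0}{\xi}$-sets as a $\cof(\xi)$-limit of functions in $\M_{<\xi}(X,Y)$ via Lemma~\ref{lem:reg}\ref{lem:reg2}, and then take the outer $\Fin_\l$-limit exactly as you do. The only (immaterial) difference is that you approximate each $B_i$ from outside by a decreasing sequence of $\Dee{0}{\xi_j+2}$-sets, whereas the paper's Claim~\ref{claim:locallyconstant} approximates the complements from inside by an increasing sequence; you also spell out the easy negative part $f\notin\M_{\xi+1}(X,Y)$, which the paper leaves implicit.
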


\begin{proof}
We first prove a claim which might be of independent interest.

\begin{claim}\label{claim:locallyconstant}
Let \( A_0, \dotsc, A_n \in \Pii{0}{\xi}(X)\), and let \(g \colon X \to Y \) be such that \(g \restriction (X \setminus \bigcup_{j\leq n} A_j)\) and \(g \restriction A_j \), for \(j\leq n\), are all constant.
Then \(g = \lim_{\a<\cof(\xi)}g_\a\) for some sequence of function \( (g_\a)_{\a < \cof(\xi) }\) with \(g_\a \in \M_{<\xi}(X,Y)\).
\end{claim}

\begin{proof}[Proof of the Claim] 
Since $g$ is constant on each \(A_j\), we can assume without loss of generality that \(A_j \cap A_i=\emptyset\) for every \(j < i  \leq n\). Let \( \bar y \) be the value of \( g \) on \( X \setminus \bigcup_{j\leq n} A_j \), and \( y_j \) be the value of \( g \) on \( A_j \), for every \( j \leq n \).
Let \((\xi_\a)_{\a<\cof(\xi)}\) be a strictly increasing sequence of ordinals cofinal in \(\xi\). 
By Lemma~\ref{lem:reg}\ref{lem:reg2}, each \( X \setminus A_j \) can be written as \(X \setminus A_j=\bigcup_{\a<\cof(\xi)}B^j_\a\) where \( B^j_{\a} \subseteq B^j_{\a'} \) if \( \a \leq \a' < \cof(\xi) \) and  \(B^j_\a \in \Dee{0}{\xi_{\a+2}}(X)\). 
For every \(\a<\cof(\xi)\), let \( g_\a \colon X \to Y \) be defined by
\[
g_\a(x)= \begin{cases}
y_j, & \text{if } x \notin \bigcap_{j \leq n} B^j_\a \text{ and } j\leq n \text{ is smallest such that } x \notin B^j_\a, \\
\bar y , & \text{ otherwise}.
\end{cases}
\]
Each \(g_\a\) is \(\Sii{0}{\xi_\a+2}\)-measurable because it is locally constant on a \(\Dee{0}{\xi_\a+2}\)-partition,
and hence it belongs to \( \M_{< \xi}(X,Y) \). 
Pick any \( x \in X \). If \( x \notin \bigcup_{j \leq n} A_j  \), then for every \( j \leq n \) there is \( \a_j < \cof(\xi) \) such that \( x \in B^j_{\a_j} \). Set \( \bar \a = \max \{ \a_j \mid j \leq n \} \): then for every \( \bar \a \leq \a < \cof(\xi) \) we have \( x \in \bigcap_{j \leq n} B^j_\a \), and hence \( g_\a(x) = \bar y = g(x) \).
If instead \( x \in \bigcup_{j \leq n} A_j \), then there is a unique \( \bar \jmath \leq n \) such that \( x \in A_{\bar \jmath} \) because the sets \( A_j \) are assumed to be pairwise disjoint. For each \( j < \bar \jmath \), let \( \a_j < \cof(\xi) \) be such that \( x \in B^j_{\a_j} \), and set \( \bar \a = \max \{ \a_j \mid j < \bar \jmath \}\). Then by construction \( g_\a(x) = y_j = g(x) \) for every \( \bar \a \leq \a < \cof(\xi) \).
This shows that \(g=\lim_{\a<\cof(\xi)}g_\a\) and concludes our proof. 
\end{proof}

By Claim~\ref{claim:locallyconstant}, it suffices to show that \( f = \lim_{d \in \Fin_\l} f_d \), where each \( f_d \colon X \to Y \) is locally constant on both \(  A_d \) and \( X \setminus A_d \) for some \( A_d \in \Pii{0}{\xi}(X) \).
To this aim, let \( \{ A_\a \mid \a < \l \} \subseteq \Pii{0}{\xi}(X) \) be such that \( A = \bigcup_{\a < \l} A_\a\). 
For each \( d \in \Fin_\l  \), let \( A_d = \bigcup \{ A_\a \mid \a \in d \} \), and then let \( f_d(x) = y_0 \) if \( a \in A_d \) and \( f_d(x) = y_1 \) otherwise. 
If \( x \notin A \), then \( f_d(x) = y_1 = f(x) \) for all \( d \in \Fin_\l \). If instead \( x \in A \), then \( f_d(x) = y_0 = f(x) \) for all \( d \supseteq \{ \a \} \), where \( \a < \l \) is any ordinal such that \(x \in A_\a \). 
This shows that \( f = \lim_{d \in \Fin_\l} f_d \), as desired.
\end{proof}

The core problem with using the double limit \( \lim_{\Fin_\l} \circ \lim_{\cof(\xi)} \) at limit levels is actually the fact that even when \( \cof(\xi) < \l \), the class \( \M_\xi(X,Y) \) is not closed under \( \cof(\xi) \)-limits. More in detail, let \( \l \) be regular and \( X,Y \in \metr \). Then for every \( 1 \leq \xi < \l^+  \), the class \( \Dee{0}{\xi+1}(X) \) is a \( \l \)-algebra. It easily follows that if \( D \) is a directed set of size smaller than \( \l \) and \( f \in \Dlim \M_\xi(X,Y)\), then  $f$ is $\Dee{0}{\xi+1}$-measurable. However, this is optimal. To see this, suppose that there is \( A \in \Pii{0}{\xi}(X) \setminus \Sii{0}{\xi}(X) \) and that \( Y \) contains at least two distinct points \( y_0 \) and \( y_1 \). Let \( f \colon X \to Y \) be defined by \( f(x) = y_0 \) if \( x \in A \) and \( f(x) = y_1 \) otherwise. Then by Claim~\ref{claim:locallyconstant} the function \( f \) is a \( \cof(\xi) \)-limit of functions in \( \M_{<\xi}(X,Y) \subseteq \M_\xi(X,Y) \), yet it is not \( \Sii{0}{\xi} \)-measurable by the choice of \( A \).

To overcome this difficulty,
we have to perform a deeper analysis of limits over directed sets of small size.
Let \( (D, \leq)\) be a directed set. A covering \( (X_d)_{d \in D} \) of a set \( X \) is (\markdef{\( D \)-})\markdef{increasing} if \( X_d \subseteq X_{d'} \) for every \( d,d' \in D \) such that \( d \leq d' \). 

\begin{definition}\label{def_hat_limit}
Let \( (D, \leq)\) be a directed set, and let
\(f\) and \( (f_d)_{d \in D} \) be functions between topological spaces \( X \) and \( Y \). Then \(f=\widehat{\lim}_{d \in D}f_d\) if there exists a \( D \)-increasing covering $(X_d)_{d\in D}$ of $X$ such that $f_{d'} \restriction X_d =f \restriction X_d$ for every \( d,d' \in D \) with $d' \geq d$.
\end{definition}

Notice that if \( f=\widehat{\lim}_{d \in D}f_d \), then there is a \emph{canonical} increasing covering \( (X_d)_{d \in D} \) of \( X \) witnessing that, namely,
\begin{equation} \label{eq:canonical}
X_d = \{ x \in X \mid f_{d'}(x) = f(x) \text{ for all }d' \geq d\} .
\end{equation}

Obviously, \(f=\widehat{\lim}_{d \in D}f_d\) implies \(f=\lim_{d \in D}f_d\). The reverse is not true in general. For example, assume that \( \l \) is regular and consider \( D = \l \), ordered as usual. Let \( f \colon \pre{\l}{\l} \to \pre{\l}{\l} \) be constant with value \( 0^{(\l)} \) and, for every \( \a < \l \), let \( f_\a \colon \pre{\l}{\l} \to \pre{\l}{\l} \) be constant with value \( 0^{(\a)} \conc 1^{(\l)} \). Then \( f = \lim_{\a < \l} f_\a \), but \( f \neq \widehat{\lim}_{\a < \l} f_\a \) for the simple reason that \( f(x) \neq f_\a(x) \) for every \( \a < \l \) and \( x \in \pre{\l}{\l}\). 

In contrast, we are now going to show that if \( D \) has small size, then the two notions of limit coincide. Let \( \D_{<\l} \) be the collection of all directed sets of size strictly smaller than \( \l \).\label{D<lambda} Clearly, \( \D_{< \l} \) is a proper subclass of \( \D_\l \).

\begin{lemma}\label{short_lim_ev_const} 
Let \( \l \) be regular, \( X , Y \in \metr \), and let  \(D \in \D_{< \l} \).
Let \( (f_d)_{d \in D} \) be a family of functions from \( X \) to \( Y \).
Then for every \( f \colon X \to Y \), the following are equivalent:
\begin{enumerate-(1)}
\item \label{short_lim_ev_const-1}
\( f  = \lim_{d \in D} f_d \);
\item \label{short_lim_ev_const-2}
for every $x \in X$ there exists $d \in D$ such that $f_{d'}(x)=f(x)$ for every $d' \geq d$;
\item \label{short_lim_ev_const-3}
\( f  = \widehat{\lim}_{d \in D} f_d \).
\end{enumerate-(1)} 
\end{lemma}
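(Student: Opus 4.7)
The plan is to verify the three implications in the most convenient cyclic/direct order: $(3) \Rightarrow (1)$ is essentially the definition of $D$-limit, $(2) \Rightarrow (3)$ just packages condition (2) into the canonical covering, and the real content is in $(1) \Rightarrow (2)$, where regularity of $\lambda$ together with $|D| < \lambda$ is crucial.

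For $(3) \Rightarrow (1)$: given a witnessing $D$-increasing covering $(X_d)_{d \in D}$ and any $x \in X$, pick $d_0$ with $x \in X_{d_0}$; then $f_{d'}(x) = f(x)$ for all $d' \geq d_0$, so trivially $f(x) = \lim_{d \in D} f_d(x)$. For $(2) \Rightarrow (3)$: take the canonical family $X_d = \{x \in X \mid \forall d' \geq d, f_{d'}(x) = f(x)\}$ as in~\eqref{eq:canonical}. This is $D$-increasing because if $d \leq d''$, every $d' \geq d''$ also satisfies $d' \geq d$. Condition~(2) is exactly saying $(X_d)_{d \in D}$ is a covering, and the compatibility requirement $f_{d'} \restriction X_d = f \restriction X_d$ for $d' \geq d$ is built into the definition of $X_d$.

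The main step is $(1) \Rightarrow (2)$. Since $\lambda$ is regular we have $\mu = \lambda$, so by Theorem~\ref{theorem:sikorski} we may assume $Y \subseteq \pre{\lambda}{\lambda}$. Fix $x \in X$ and consider
\[
D' = \{ d \in D \mid f_d(x) \neq f(x)\}.
\]
Condition~(2) fails at $x$ precisely when $D'$ is cofinal in $D$, i.e.\ for every $d \in D$ there is $d' \in D'$ with $d' \geq d$. Arguing for contradiction, assume this. For every $d \in D'$ let $\alpha_d < \lambda$ be the least ordinal with $f_d(x)(\alpha_d) \neq f(x)(\alpha_d)$. The set $\{\alpha_d \mid d \in D'\}$ has cardinality at most $|D| < \lambda$, and since $\lambda$ is regular uncountable it is bounded; let $\alpha^* = \sup_{d \in D'} \alpha_d < \lambda$. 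The basic neighborhood $\Nbhd_{f(x) \restriction (\alpha^*+1)} \cap Y$ of $f(x)$ in $Y$ witnesses, via~(1), some $d_1 \in D$ with $f_{d'}(x) \restriction (\alpha^*+1) = f(x) \restriction (\alpha^*+1)$ for all $d' \geq d_1$. By cofinality of $D'$ pick $d' \in D'$ with $d' \geq d_1$: then $\alpha_{d'} \leq \alpha^* < \alpha^*+1$ forces $f_{d'}(x)(\alpha_{d'}) = f(x)(\alpha_{d'})$, contradicting the definition of $\alpha_{d'}$.

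The only non-routine ingredient is the use of regularity to bound $\{\alpha_d \mid d \in D'\}$; everything else is bookkeeping. I do not expect any significant obstacle beyond making sure the contradiction uses an ultrametric basic neighborhood at level $\alpha^*+1$ rather than $\alpha^*$, so that the offending coordinate $\alpha_{d'}$ is actually captured.
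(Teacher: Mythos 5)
Your proof is correct, and its skeleton coincides with the paper's: the implications \ref{short_lim_ev_const-3} \(\Rightarrow\) \ref{short_lim_ev_const-1} and \ref{short_lim_ev_const-2} \(\Rightarrow\) \ref{short_lim_ev_const-3} are handled exactly as in the paper (the latter via the canonical covering~\eqref{eq:canonical}), and all the content sits in \ref{short_lim_ev_const-1} \(\Rightarrow\) \ref{short_lim_ev_const-2}. For that step you argue by contradiction: you collect, for each ``bad'' index \(d\) with \(f_d(x) \neq f(x)\), the first coordinate \(\alpha_d\) of disagreement, bound \(\sup_d \alpha_d = \alpha^* < \l\) by regularity since there are fewer than \(\l\) such indices, and then let convergence at level \(\alpha^*+1\) kill a cofinal bad index. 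The paper instead argues directly: for each level \(\beta < \l\) it picks a threshold \(d_\beta \in D\), pigeonholes the partition \((G_d)_{d \in D}\) of \(\l\) into \(|D| < \l\) pieces to find a single \(\bar d\) whose fiber is unbounded, and concludes that every \(f_{d'}(x)\) with \(d' \geq \bar d\) agrees with \(f(x)\) on arbitrarily long initial segments, hence everywhere. The two arguments are contrapositive twins of the same use of regularity (a subset of \(\l\) of size \(< \l\) is bounded, versus a partition of \(\l\) into \(< \l\) pieces has an unbounded piece), so neither buys anything substantial over the other; your observation that the neighborhood must be taken at level \(\alpha^*+1\) rather than \(\alpha^*\) is exactly the right care point, and the reduction to \(Y \subseteq \pre{\l}{\l}\) via Theorem~\ref{theorem:sikorski} is legitimate since \(\l\) regular uncountable gives \(\mu = \l > \omega\).
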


\begin{proof} 
By Theorem~\ref{theorem:sikorski}, without loss of generality we can assume that \( X,Y \subseteq \pre{\l}{\l} \).

\ref{short_lim_ev_const-1} \( \Rightarrow \) \ref{short_lim_ev_const-2}
Fix $x \in X$. For every $\b <\l$ there exists $d_\b \in D$ such that $f_d(x)\restriction\b = f(x)\restriction\b$, for all $ d \geq d_\b$. 
For every $d \in D$, let $G_d=\{ \b<\l \mid d_\b = d \}$. Since $(G_d)_{d \in D}$ is a covering of $\l$ in \( |D| \)-many pieces and \( \l \) is regular, there is $\bar d \in D $ such that $|G_{\bar d}|=\l$, so that \( G_{\bar d} \) is unbounded in \( \l \). 
It follows that for every $ d' \geq \bar d$ there are arbitrarily large $\b<\l$ such that $f_{d'}(x)\restriction\b = f(x)\restriction\b $, and thus $f_{d'}(x)=f(x)$.  

\ref{short_lim_ev_const-2} \( \Rightarrow \) \ref{short_lim_ev_const-3}
For \(d \in D \), let \( X_d \) be as in equation~\eqref{eq:canonical}.
By definition \( X_d \subseteq X_{d'} \) whenever \( d \leq d' \), and~\ref{short_lim_ev_const-2} ensures that \( \bigcup_{d \in D} X_d = X \).
It is then easy to verify that \( (X_d)_{d \in D} \) witnesses \( f  = \widehat{\lim}_{d \in D} f_d \).

\ref{short_lim_ev_const-3} \( \Rightarrow \) \ref{short_lim_ev_const-1}
Obvious.
\end{proof}

Although they are equivalent when \( D \in \D_{< \l} \), the advantage of using the limit operator \( \widehat{\lim}_{d \in D} \) instead of \( \lim_{d \in D} \) is that we can naturally impose a definability condition on the increasing coverings involved in its definition. Lemma~\ref{short_limits_xi_bis} and Theorem~\ref{thm:measurable_limits_2} show that this move fixes the problems encountered when dealing with limit levels of cofinality smaller than \( \l \).

\begin{definition}\label{def_hat_limit_definable}
Let \( (D, \leq)\) be a directed set, \( 1 \leq \xi < \l^+ \) be an ordinal, and let
\(f\) and \( (f_d)_{d \in D} \) be functions between topological spaces \( X \) and \( Y \). Then \(f=\widehat{\lim}{}^\xi_{d \in D}f_d\) if there exists a \( D \)-increasing covering $(X_d)_{d\in D}$ of $X$ witnessing \( f = \widehat{\lim}_{d \in D} f_d \) such that 
\( X_d \in \Sii{0}{\xi}(X) \) for every \( d \in D \).
\end{definition}

\begin{remark}
It can be shown that if \( f = \widehat \lim_{d \in D} f_d \) and \( f_d \in \M_\xi(X,Y)\) for every \( d \in D \), then the canonical increasing covering \( (X_d)_{d \in D} \) from equation~\eqref{eq:canonical} is such that \( X_d \in \Pii{0}{\xi}(X) \) for all \( d \in D \), because \( X_d = \bigcap_{d',d'' \geq d} \{ x \in X \mid f_{d'}(x) = f_{d''}(x) \}\). Thus in Definition~\ref{def_hat_limit_definable} we are adopting the weakest possible definability requirement.
\end{remark}

In contrast to the other results of this section, which all require \( \l \) to be regular, the next two results hold unconditionally, and indeed they will be used in Section~\ref{subsec:singular} too. 

\begin{lemma}\label{short_limits_xi_bis}
Let \( X , Y \in \metr \) and \( D \in \D_\l \). For every \( 1 \leq \xi < \l^+ \), if \(f \in \llimhatdef{D}{\xi} \M_\xi(X,Y)\), then  \(f \in \M_{\xi}(X,Y) \).
\end{lemma}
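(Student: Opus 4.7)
The plan is to unfold the definition and directly compute preimages of open sets. By hypothesis $f = \llimhatdef{D}{\xi}_{d \in D} f_d$ with each $f_d \in \M_\xi(X,Y)$, so there is a $D$-increasing covering $(X_d)_{d \in D}$ of $X$ with $X_d \in \Sii{0}{\xi}(X)$ such that $f_{d'} \restriction X_d = f \restriction X_d$ for all $d' \geq d$. Taking in particular $d' = d$, for every $d \in D$ and every $U \subseteq Y$ we have $X_d \cap f^{-1}(U) = X_d \cap f_d^{-1}(U)$. Since the sets $X_d$ cover $X$, this gives the key identity
\[
f^{-1}(U) = \bigcup_{d \in D}\left(X_d \cap f_d^{-1}(U)\right).
\]

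The next step is to check that the right-hand side is $\Sii{0}{\xi}(X)$ whenever $U$ is open in $Y$. For each $d$, the set $f_d^{-1}(U)$ lies in $\Sii{0}{\xi}(X)$ because $f_d$ is $\Sii{0}{\xi}$-measurable, and $X_d \in \Sii{0}{\xi}(X)$ by the definability condition built into the definition of $\llimhatdef{D}{\xi}$. Thus each summand $X_d \cap f_d^{-1}(U)$ is the intersection of two $\Sii{0}{\xi}$-sets, which still lies in $\Sii{0}{\xi}(X)$ (the pointclass $\Sii{0}{\xi}$ is closed under finite intersections for every $1 \leq \xi < \l^+$, as recalled in Section~\ref{subsec:measurablefunctions}). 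Finally, since $|D| \leq \l$ and $\Sii{0}{\xi}(X)$ is closed under unions of length at most $\l$, the union above belongs to $\Sii{0}{\xi}(X)$. As $U$ was an arbitrary open subset of $Y$, this shows $f \in \M_\xi(X,Y)$.

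There is no real obstacle here: the whole point of the definability constraint $X_d \in \Sii{0}{\xi}(X)$ in Definition~\ref{def_hat_limit_definable} is precisely to prevent the complexity of $f$ from exceeding that of the approximating functions $f_d$, and the argument above simply verifies that this safeguard works as intended. Notice also that the argument is completely uniform in $\xi$ and does not use regularity of $\l$; this is what will allow the same lemma to feed into both the regular and the singular cases of the level-by-level analysis.
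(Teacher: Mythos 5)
Your proof is correct and follows essentially the same route as the paper's: both establish the identity \( f^{-1}(U) = \bigcup_{d \in D} \bigl( f_d^{-1}(U) \cap X_d \bigr) \) from the witnessing covering and then conclude from the definability of the \( X_d \), closure of \( \Sii{0}{\xi} \) under finite intersections, and closure under unions of size at most \( \l \). Your write-up merely makes explicit the closure properties that the paper leaves implicit.
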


\begin{proof}
Let \( f = \widehat{\lim}{}^\xi_{d \in D}f_d \), as witnessed by the \( D \)-increasing covering \( (X_d)_{d \in D} \). Then for every open set \( U \subseteq Y \) we have
\[
f^{-1}(U) = \bigcup_{d \in D} (f_d^{-1}(U) \cap X_d ),
\]
so that \( f^{-1}(U) \in \Sii{0}{\xi}(X) \) because \( X_d \in \Sii{0}{\xi}(X) \).
\end{proof}

The proof of the next result is essentially the same of Corollary~\ref{cor:doublelimit}: we only have to check that, thanks to Lemma~\ref{short_limits_xi_bis}, the definability condition added in Definition~\ref{def_hat_limit_definable} allows us to obtain an equality instead of an inclusion.

\begin{theorem}\label{thm:measurable_limits_2}
 Let \( X , Y \in \metr \), and further assume that \( \dim(X) = \dim(Y) = 0 \) if \( \mu = \o \).  
For every limit ordinal  \( 1 \leq \xi < \l^+ \),
\[
\M_{\xi+1}(X,Y) = \llim{\Fin_\l} \left( \llimhatdef{\cof(\xi)}{\xi} \M_{<\xi}(X,Y) \right).
\]
\end{theorem}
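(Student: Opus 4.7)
The plan is to establish the two inclusions separately, with both directions reducing to results already in hand. The inclusion $\supseteq$ will follow from Lemma~\ref{short_limits_xi_bis} combined with a routine preimage computation for clopen sets, while the inclusion $\subseteq$ will be obtained by inspecting the constructions behind Corollary~\ref{cor:doublelimit} more carefully, observing that they in fact automatically yield the stronger $\widehat{\lim}{}^\xi$-type convergence.

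For $\supseteq$, suppose $f = \lim_{d \in \Fin_\l} f_d$ with each $f_d = \widehat{\lim}{}^\xi_{i \in \cof(\xi)} g^d_i$ for $g^d_i \in \M_{<\xi}(X,Y) \subseteq \M_\xi(X,Y)$. Lemma~\ref{short_limits_xi_bis} immediately gives $f_d \in \M_\xi(X,Y)$. Under the standing assumptions, $Y$ is zero-dimensional (automatic when $\mu > \o$, by hypothesis when $\mu = \o$), so it suffices to verify $f^{-1}(U) \in \Sii{0}{\xi+1}(X)$ for every clopen $U \subseteq Y$. Pointwise convergence together with clopenness yields
\[
f^{-1}(U) = \bigcup_{d \in \Fin_\l} \bigcap_{d' \supseteq d} f_{d'}^{-1}(U),
\]
and since each $f_{d'}^{-1}(U) \in \Dee{0}{\xi}(X) \subseteq \Pii{0}{\xi}(X)$ and $\Pii{0}{\xi}$ is closed under intersections of length $\l$ (dually to $\Sii{0}{\xi}$ being closed under $\l$-unions), each inner intersection lies in $\Pii{0}{\xi}(X)$. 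The outer union indexes $|\Fin_\l| = \l$ such sets, so $f^{-1}(U) \in \Sii{0}{\xi+1}(X)$, as desired.

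For $\subseteq$, I would first reduce to $X, Y \subseteq \pre{\mu}{\l}$ via Theorems~\ref{theorem:sikorski} and~\ref{theorem:sikorski2}. Given $f \in \M_{\xi+1}(X,Y)$, Proposition~\ref{prop:finlimit} produces $(f_d)_{d \in \Fin_\l}$ with $f = \lim_{d \in \Fin_\l} f_d$ such that each $f_d$ is locally constant on a finite $\Dee{0}{\xi}$-partition of $X$. It then remains to realize each such $f_d$ as an element of $\cof(\xi)\text{-}\widehat{\lim}{}^\xi \M_{<\xi}(X,Y)$. For this, Lemma~\ref{lem:locallyconstant} constructs a family $(g_i)_{i < \cof(\xi)}$ in $\M_{<\xi}(X,Y)$ with $f_d = \lim_{i<\cof(\xi)} g_i$, and Remark~\ref{rem:partition} records the crucial additional information that the proof actually supplies an increasing covering $(X_i)_{i<\cof(\xi)}$ of $X$ with $X_i \in \Sii{0}{\xi}(X)$ and $g_{i'} \restriction X_i = f_d \restriction X_i$ for all $i \leq i'$. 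This is precisely the witness required by Definition~\ref{def_hat_limit_definable} for $f_d = \widehat{\lim}{}^\xi_{i \in \cof(\xi)} g_i$, concluding the argument.

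The only genuinely new step beyond Corollary~\ref{cor:doublelimit} is thus the observation that Lemma~\ref{lem:locallyconstant}, read together with the bookkeeping of Remark~\ref{rem:partition}, produces a $\widehat{\lim}{}^\xi$-convergent rather than merely pointwise-convergent family; the rest is structural and follows from closure properties of the $\l^+$-Borel hierarchy and Lemma~\ref{short_limits_xi_bis}. There is no real obstacle, only the verification that the witnessing covering has the required $\Sii{0}{\xi}$-complexity, which is immediate from the explicit construction recorded in Remark~\ref{rem:partition}.
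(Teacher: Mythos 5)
Your proposal is correct and follows essentially the same route as the paper: the inclusion $\subseteq$ is obtained from Proposition~\ref{prop:finlimit} together with Lemma~\ref{lem:locallyconstant} and the extra information recorded in Remark~\ref{rem:partition}, while $\supseteq$ combines Lemma~\ref{short_limits_xi_bis} with the standard clopen-preimage computation (the same one used in the implication \ref{teo:D_limits4} $\Rightarrow$ \ref{teo:D_limits1} of Theorem~\ref{teo:D_limits}). The only difference is that you spell out the preimage computation explicitly where the paper cites it, which is harmless.
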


\begin{proof}
Without loss of generality, we can assume \(X,Y \subseteq \pre{\mu}{\l}\) by Theorems~\ref{theorem:sikorski} and~\ref{theorem:sikorski2}.
The inclusion from left to right follows again from Proposition~\ref{prop:finlimit} and Lemma~\ref{lem:locallyconstant}, together with Remark~\ref{rem:partition}. 

For the reverse inclusion, observe that \( \llimhatdef{\cof(\xi)}{\xi} \M_{<\xi}(X,Y) \subseteq \M_\xi(X,Y) \) by Lemma~\ref{short_limits_xi_bis}, so it is enough to check that \( \llim{\Fin_\l} \M_\xi(X,Y) \subseteq \M_{\xi+1}(X,Y) \). 
Since \( |\Fin_\l| = \l \) this can easily be obtain from the same argument used in the implication \ref{teo:D_limits4} $\Rightarrow$ \ref{teo:D_limits1} of Theorem~\ref{teo:D_limits}, the only difference being that now the functions \( f_d \) are assumed to be \( \Sii{0}{\xi} \)-measurable.
\end{proof}

We are now ready to discuss some simple and natural generalizations to the uncountable setting of the Baire stratification of Borel functions, with the goal of obtaining an analogue of Theorem~\ref{thm:Bairestratification-classical}. 
Recall that in the classical case \( \l = \o \), the class of Baire class \( \xi \) functions is defined by taking continuous functions if \( \xi = 0 \), and (\( \omega \)-)limits of functions of lower Baire class if \( \xi > 0 \).
As discussed at length, when moving to a regular cardinal we can naturally use \( \l \)-limits at successor stages and at limit levels with cofinality \( \l \), while in the remaining cases we have to use a different limit operator. This leads us to the following first proposal.
To simplify the notation, we write \( \mathcal{B}_{<\xi}(X,Y) \) instead of \( \bigcup_{\xi' < \xi} \mathcal{B}_{\xi'}(X,Y) \).

\begin{definition}\label{baire_class}
Let \( \l \) be regular, and let \( X , Y \in \metr \). For every \( \xi < \l^+ \), we recursively define the collection \( \mathcal{B}_\xi(X,Y) \) of \markdef{\( \l \)-Baire class \( \xi \) functions} as follows:
\begin{enumerate-(1)}
\item \label{baire_class-1}
\( \mathcal{B}_0(X,Y) \) is the collection of all continuous functions \( f \colon X \to Y \).
\item \label{baire_class-2}
If \( \xi \) is either a successor ordinal or a limit ordinal with \( \cof(\xi) = \l \), then 
\[  
\mathcal{B}_\xi(X,Y) = \llim{\l} \mathcal{B}_{<\xi}(X,Y).
\]
\item \label{baire_class-3}
If \( \xi \) is a limit ordinal with \(  \cof(\xi) < \l \), then 
\[
\mathcal{B}_\xi(X,Y) = \llim{\Fin_\l} \left( \llimhatdef{\cof(\xi)}{\xi} \mathcal{B}_{<\xi}(X,Y) \right).
\]
\end{enumerate-(1)}
\end{definition}

Definition~\ref{baire_class} is designed to be as close as possible to the classical definition and to minimize the kind of limit operators involved, at the price of having to consider different cases depending on the ordinal \( \xi < \l^+ \). 
In the opposite direction, one could desire to have a uniform definition which maximizes the class of limit operators that can be employed. This leads us to the idea of allowing the use of the special double-limits from part~\ref{baire_class-3} of Definition~\ref{baire_class} at every level of the hierarchy, and to replace \( \Fin_\l \) and \( \cof(\xi) \) with any pair of directed sets in \( \D_\l \).
We again write \( \widehat{\mathcal{B}}_{< \xi}(X,Y) \) instead of \( \bigcup_{\xi' < \xi} \widehat{\mathcal{B}}_{\xi'}(X,Y) \). 

\begin{definition}\label{baire_class_hat}
Let \( \l \) be regular, and let \( X , Y \in \metr \). For every \( \xi < \l^+ \), we recursively define the classes of functions \( \widehat{\mathcal{B}}_\xi(X,Y) \) as follows: 
\begin{enumerate-(1)}
\item \label{baire_class_hat-1}
\( \widehat{\mathcal{B}}_0(X,Y) \) is the collection of all continuous functions \( f \colon X \to Y \).
\item \label{baire_class_hat-2}
If \( \xi > 0 \), then
\[  
\widehat{\mathcal{B}}_\xi(X,Y) = \llim{\D_\l} \left( \llimhatdef{\D_{\l}}{\xi} \widehat{\mathcal{B}}_{<\xi}(X,Y) \right).
\]
\end{enumerate-(1)}
\end{definition}

We are now going to show that for most spaces \( X \) and \( Y \), both definitions do the job, and they are in fact equivalent to each other.
We remark that the two options are the extremes of a whole range of intermediate possibilities, which however would give rise to the very same classes of functions because of the next theorem.

\begin{theorem}\label{baire_class_xi_teo} 
Let \( \l \) be regular, \( X , Y \in \metr \), and suppose that \( Y \) is spherically complete.  
Then for every $ \xi<\l^+$,
\[
\mathcal{B}_\xi(X,Y) = \widehat{\mathcal{B}}_\xi(X,Y) = \M_{\xi+1}(X,Y).
\]
\end{theorem}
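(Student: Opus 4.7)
The plan is to argue by induction on $\xi < \l^+$, establishing $\mathcal{B}_\xi(X,Y) \subseteq \widehat{\mathcal{B}}_\xi(X,Y) \subseteq \M_{\xi+1}(X,Y) \subseteq \mathcal{B}_\xi(X,Y)$ at each stage simultaneously. The base case $\xi = 0$ is immediate because all three classes reduce to the set of continuous functions from $X$ to $Y$. Throughout, regularity of $\l$ forces $\mu = \l > \o$, so by Theorem~\ref{theorem:sikorski} both $X$ and $Y$ are zero-dimensional; this will be used freely in the clopen-preimage computations below.

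For the inductive step, fix $\xi > 0$ and assume the equality holds at every $\xi' < \xi$. First I would verify $\mathcal{B}_\xi \subseteq \widehat{\mathcal{B}}_\xi$ essentially by inspection: the inductive hypothesis yields $\mathcal{B}_{<\xi} = \widehat{\mathcal{B}}_{<\xi}$, and each operator appearing in Definition~\ref{baire_class} is a particular case of $\llim{\D_\l} \circ \llimhatdef{\D_\l}{\xi}$. Indeed, one recovers a plain $\llim{\l}$ by letting the inner operator range over a singleton directed set equipped with the trivial covering $\{X\} \in \Sii{0}{\xi}(X)$; and for the $\cof(\xi) < \l$ case one simply notes that $\Fin_\l, \cof(\xi) \in \D_\l$.

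For $\widehat{\mathcal{B}}_\xi \subseteq \M_{\xi+1}$, the key observation is that by the inductive hypothesis $\widehat{\mathcal{B}}_{<\xi} \subseteq \M_\xi$: if $\xi$ is a successor then $\bigcup_{\xi' < \xi} \M_{\xi'+1} = \M_\xi$, while if $\xi$ is a limit the union equals $\M_{<\xi} \subseteq \M_\xi$. Lemma~\ref{short_limits_xi_bis} then forces $\llimhatdef{D}{\xi} \widehat{\mathcal{B}}_{<\xi} \subseteq \M_\xi$ for every $D \in \D_\l$, and an outer $\D_\l$-limit of $\M_\xi$-measurable functions is still $\Sii{0}{\xi+1}$-measurable by the standard clopen-preimage identity~\eqref{eq:preimage1} applied to an index set of size at most $\l$.

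It remains to close the loop with $\M_{\xi+1} \subseteq \mathcal{B}_\xi$, which I would split along the three clauses of Definition~\ref{baire_class}. For successor $\xi$, Theorem~\ref{thm:measurable_limits} identifies $\M_{\xi+1}$ with $\llim{\l} \M_\xi$, and the inductive hypothesis rewrites this as $\llim{\l} \mathcal{B}_{<\xi} = \mathcal{B}_\xi$; for limit $\xi$ of cofinality $\l$, the same theorem gives $\M_{\xi+1} = \llim{\l} \M_{<\xi} = \llim{\l} \mathcal{B}_{<\xi} = \mathcal{B}_\xi$; and for limit $\xi$ with $\cof(\xi) < \l$, the identity $\M_{\xi+1} = \llim{\Fin_\l}(\llimhatdef{\cof(\xi)}{\xi} \M_{<\xi})$ from Theorem~\ref{thm:measurable_limits_2} is precisely the definition of $\mathcal{B}_\xi$. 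The delicate spot I expect to be the main obstacle is this last case, where replacing $\llimhatdef{\cof(\xi)}{\xi}$ with an ordinary $\llim{\cof(\xi)}$ would overshoot the target class by Proposition~\ref{prop:double_lim_counterex}; the built-in $\Sii{0}{\xi}$ definability of the approximating covering is what confines the outer limit to the correct level of measurability and makes the induction close.
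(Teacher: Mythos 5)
Your proposal is correct and follows essentially the same route as the paper's proof: induction on \( \xi \), with \( \mathcal{B}_\xi = \M_{\xi+1} \) extracted from Theorems~\ref{thm:measurable_limits} and~\ref{thm:measurable_limits_2}, the inclusion \( \mathcal{B}_\xi \subseteq \widehat{\mathcal{B}}_\xi \) by inspection, and \( \widehat{\mathcal{B}}_\xi \subseteq \M_{\xi+1} \) via Lemma~\ref{short_limits_xi_bis} together with the clopen-preimage computation. You merely arrange the inclusions as a single cycle and spell out details the paper leaves implicit; the substance is identical.
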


\begin{proof}
By induction on \( \xi < \l^+ \). The equality \( \mathcal{B}_\xi(X,Y)= \M_{\xi+1}(X,Y) \) follows from Theorems~\ref{thm:measurable_limits} and~\ref{thm:measurable_limits_2}, while the inclusion \(\mathcal{B}_\xi(X,Y) \subseteq  \widehat{\mathcal{B}}_\xi(X,Y) \) is obvious. Finally, to prove 
\( \widehat{\mathcal{B}}_\xi(X,Y) \subseteq \M_{\xi+1}(X,Y) \) it is enough to use Lemma~\ref{short_limits_xi_bis} and the usual computation yielding \( \Dlim \M_\xi(X,Y) \subseteq \M_{\xi+1}(X,Y) \) for every \( D \in \D_\l \).
\end{proof}

The assumption that \( Y \) be spherically complete in Theorem~\ref{baire_class_xi_teo} is very mild, as every \( \l \)-Cauchy-complete%
\footnote{These spaces are one of the natural generalizations to the uncountable setting of Polish spaces, and they are indeed called \( \GG \)-Polish spaces in the specialized literature (see e.g.~\cite{AMRS23}).}
\( X \in \metr \) can easily be turned into a spherically complete space by adding at most \( \l \)-many points. 
However, if one wants to dispense from such assumption and work with arbitrary spaces in \( \metr \), it is enough to modify Definition~\ref{baire_class} as follows:
\begin{itemizenew}
\item 
when \( \xi \) is a successor ordinal, \( \l \)-limits have to be replaced with \( \Fin_\l \)-limits;
\item
at all limit levels \( \xi \) we use the double limit \( \lim_{\Fin_\l} \circ \ \widehat{\lim}^\xi_{\cof(\xi)} \), independently of the cofinality of \( \xi \).
\end{itemizenew} 
This works because even after such modification, in the first part of the proof of Theorem~\ref{baire_class_xi_teo} we can still prove the inclusion \( \M_{\xi+1}(X,Y) \subseteq \B_\xi(X,Y) \) by using Proposition~\ref{prop:finlimit} and (a suitable variant of) Corollary~\ref{cor:doublelimit}.
This alternative approach turns out to work also in the singular case, as briefly discussed in the next section.

\subsection{The singular case}\label{subsec:singular}

We now move to the case where \( \mu = \cof(\l) < \l \). There are two major differences from the regular case:
\begin{enumerate-(1)}
\item 
First, \( \l \)-limits are no longer relevant, as \( \l \)-limits and \( \mu \)-limits are obviously equivalent.
\item 
Secondly, when \( \xi \) is limit we do not have to distinguish cases depending on the cofinality of \( \xi \): since \( \l \) is singular, \( \cof(\xi) < \l \) for every limit \( \xi < \l^+ \).
\end{enumerate-(1)}

This naturally leads us to the following variation of Definition~\ref{baire_class}.

\begin{definition}\label{baire_class_singular}
Let \( \l \) be singular, and let \( X , Y \in \metr \). For every \( \xi < \l^+ \), we recursively define the collection \( \mathcal{B}_\xi(X,Y) \) of \markdef{\( \l \)-Baire class \( \xi \) functions} as follows:
\begin{enumerate-(1)}
\item \label{baire_class-1}
\( \mathcal{B}_0(X,Y) \) is the collection of all continuous functions \( f \colon X \to Y \).
\item \label{baire_class-2}
If \( \xi \) is a successor ordinal, then 
\[  
\mathcal{B}_\xi(X,Y) = \llim{\Fin_\l} \mathcal{B}_{<\xi}(X,Y).
\]
\item \label{baire_class-3}
If \( \xi \) is a limit ordinal, then 
\[
\mathcal{B}_\xi(X,Y) = \llim{\Fin_\l} \left( \llimhatdef{\cof(\xi)}{\xi} \mathcal{B}_{<\xi}(X,Y) \right).
\]
\end{enumerate-(1)}
\end{definition}

Moreover, it still makes sense to consider the alternative approach taken in Definition~\ref{baire_class_hat}: in this case, the definition of the classes \( \widehat{\mathcal{B}}_\xi(X,Y) \) needs not to be updated. As in the regular case, it turns out that both definitions allow us to generalize Theorem~\ref{thm:Bairestratification-classical}.

\begin{theorem}\label{baire_class_xi_teo_sing} 
Let \( \l \) be singular, \( X , Y \in \metr \), and further assume that \( \dim(X) = \dim(Y) = 0 \) if \( \mu = \o \). 
Then for every $ \xi<\l^+$,
\[
\mathcal{B}_\xi(X,Y) = \widehat{\mathcal{B}}_\xi(X,Y) = \M_{\xi+1}(X,Y).
\]
\end{theorem}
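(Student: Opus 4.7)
The plan is to mirror the proof of Theorem~\ref{baire_class_xi_teo} but with $\l$-limits systematically replaced by $\Fin_\l$-limits (these being the natural replacement once $\cof(\l) = \mu < \l$), and to exploit the fact that the three key intermediate results Proposition~\ref{prop:finlimit}, Lemma~\ref{short_limits_xi_bis}, and Theorem~\ref{thm:measurable_limits_2} were deliberately stated with no regularity assumption on $\l$. By Theorems~\ref{theorem:sikorski} and~\ref{theorem:sikorski2} together with the dimension hypothesis in the case $\mu = \o$, I may assume throughout that $X, Y \subseteq \pre{\mu}{\l}$.

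The argument is an induction on $\xi < \l^+$, with the trivial base case $\xi = 0$ in which all three classes equal $\M_1(X,Y)$. For the inductive step I would establish the cyclic chain
\[
\B_\xi(X,Y) \subseteq \widehat{\B}_\xi(X,Y) \subseteq \M_{\xi+1}(X,Y) \subseteq \B_\xi(X,Y).
\]
The first inclusion uses the inductive hypothesis $\B_{<\xi} \subseteq \widehat{\B}_{<\xi}$ together with the observation that any $f$ equals $\widehat{\lim}^\xi_D f$ for the singleton $D = \{*\} \in \D_\l$ (with trivial covering $X_d = X$), so that the limits defining $\B_\xi$ embed inside those defining $\widehat{\B}_\xi$. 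The second inclusion combines the inductive hypothesis $\widehat{\B}_{<\xi}(X,Y) \subseteq \M_{<\xi}(X,Y) \subseteq \M_\xi(X,Y)$, the unconditional Lemma~\ref{short_limits_xi_bis}, and the routine preimage computation showing that any $\D_\l$-limit of $\Sii{0}{\xi}$-measurable functions is $\Sii{0}{\xi+1}$-measurable.

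The main content lies in the third inclusion $\M_{\xi+1}(X,Y) \subseteq \B_\xi(X,Y)$, which splits according to whether $\xi$ is a successor or a limit. If $\xi = \xi'+1$, then Proposition~\ref{prop:finlimit} expresses each $f \in \M_{\xi+1}$ as a $\Fin_\l$-limit of functions locally constant on a finite $\Dee{0}{\xi}$-partition of $X$; each such function is $\Sii{0}{\xi}$-measurable, hence belongs to $\M_\xi = \B_{\xi'}$ by the inductive hypothesis, yielding $f \in \llim{\Fin_\l} \B_{<\xi}(X,Y) = \B_\xi(X,Y)$. If instead $\xi$ is a limit ordinal, I would apply Theorem~\ref{thm:measurable_limits_2} directly, after observing that the inductive hypothesis yields $\M_{<\xi}(X,Y) = \B_{<\xi}(X,Y)$ (every $\xi' < \xi$ is dominated by some $\xi''+1 < \xi$, with $\M_{\xi''+1} = \B_{\xi''}$ by induction).

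The main obstacle in the regular case (Theorem~\ref{baire_class_xi_teo}) was the handling of limit $\xi$ with $\cof(\xi) < \l$, which relied on the change-of-topology machinery of Section~\ref{subsec:longregular} and genuinely exploited regularity of $\l$. In the singular setting \emph{every} limit $\xi < \l^+$ has $\cof(\xi) < \l$, so that approach would collapse entirely; however, Definition~\ref{baire_class_singular} uses the double-limit operator $\llim{\Fin_\l} \circ \llimhatdef{\cof(\xi)}{\xi}$ uniformly at every limit stage, and this is precisely the operator characterized by the cofinality-agnostic Theorem~\ref{thm:measurable_limits_2}. The hard work is therefore already absorbed in that result, so no spherical completeness hypothesis on $Y$ is needed, and the singular case reduces to assembling the inductive pieces above.
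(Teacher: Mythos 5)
Your proof is correct and follows essentially the same route as the paper: induction on \( \xi \) with the cyclic chain of inclusions, using Proposition~\ref{prop:finlimit} at successor stages and the unconditional Lemma~\ref{short_limits_xi_bis} (plus the routine preimage computation) for \( \widehat{\B}_\xi(X,Y) \subseteq \M_{\xi+1}(X,Y) \). The only cosmetic difference is that at limit \( \xi \) you invoke Theorem~\ref{thm:measurable_limits_2} directly while the paper cites ``a suitable variant of Corollary~\ref{cor:doublelimit}''; since the left-to-right half of Theorem~\ref{thm:measurable_limits_2} is exactly that variant and is proved without any regularity assumption on \( \l \), the content is identical.
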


\begin{proof}
We argue as in the proof of Theorem~\ref{baire_class_xi_teo}, with the following exception. Instead of showing that \( \mathcal{B}_\xi(X,Y)= \M_{\xi+1}(X,Y) \), we just prove the inclusion \( \M_{\xi+1}(X,Y) \subseteq \mathcal{B}_\xi(X,Y) \) using Proposition~\ref{prop:finlimit} (in the successor case) and a suitable variant of Corollary~\ref{cor:doublelimit} (in the limit case) instead of Theorems~\ref{thm:measurable_limits} and~\ref{thm:measurable_limits_2}. The rest of the proof goes unchanged.
\end{proof}


\section{Uniform limits} \label{sec:uniform_lim}

This short section aims at generalizing the results from~\cite[Section 3]{MR09} to the uncountable setting. The theme is that of characterizing \( \Sii{0}{\xi} \)-measurable functions in terms of \emph{uniform} limits of simpler functions. Although somewhat unrelated to the previous sections, these results add more information on the structure of \( \Sii{0}{\xi} \)-measurable functions with respect to another kind of limit, and thus we feel that they fit well with the general topic of the paper.

Recall that in Section~\ref{subsec:metr} we fixed a field \( \GG \) with degree \( \mu = \cof(\l) \) and a strictly decreasing sequence \( (r_\a)_{\a < \mu} \) coinitial in \( \GG^+ \). 
By the choice we made, \( r_{\a+1} + r_{\a+1} \leq r_\a \) for every \( \a < \mu \).
Let \( \metrhat\) be the collection of all \( \GG \)-metric spaces of weight at most \( \l \).
Given a directed set \( D \), a topological space \( X \in \metr \), and a \( \GG \)-metric space \( (Y,d_Y) \in \metrhat \), we say that a function \( f \colon X \to Y \) is the \markdef{uniform \( D \)-limit} of a family \( (f_d)_{d \in D} \) of functions from \( X \) to \( Y \), and we write \( f = \ulim_{d\in D} f_d\), if for every \(\epsilon \in \GG^+\) there is \( d \in D \) such that \(d_Y(f_{d'}(x), f(x))<\epsilon\) for  every \( x \in X \) and \( d' \geq d \). It is not hard to see that one could restrict the attention to directed sets of size at most \( \mu \): if \( f = \ulim_{d \in D} f_d \), then there is \( D' \subseteq D\) with \( |D'| = \min \{ \mu, |D| \} \) such that \( f  = \ulim_{d \in D'} f_d\).

As in the classical setting, it is easy to check that continuous functions are closed under uniform \( \l \)-limits. 
We generalize this to higher levels and to all directed sets \( D \).

\begin{lemma}\label{lem:closure_uniformnew}
Let \( D \) be any directed set, \( X \in \metr \), \( (Y,d_Y) \in \metrhat \), and \( 1 \leq \xi<\l^+\).
Then \( \M_\xi(X,Y) \) is closed under uniform \( D \)-limits. 
\end{lemma}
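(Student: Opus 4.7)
My plan is to reduce the verification of $\Sii{0}{\xi}$-measurability of $f$ to computing $f^{-1}$ of a single open ball of $Y$, and then exhibit this preimage as a $\mu$-sized union of $\Sii{0}{\xi}$-sets. First I would invoke the observation preceding the lemma to assume without loss of generality that $|D| \leq \mu$. For each $\a < \mu$, the definition of uniform $D$-limit lets me pick some $d^*_\a \in D$ such that $d_Y(f_{d'}(x),f(x)) < r_\a$ for every $d' \in D$ with $d' \geq d^*_\a$ and every $x \in X$; in particular this inequality holds for $d' = d^*_\a$ itself. Since $Y$ has weight at most $\l$, every open subset of $Y$ is a union of at most $\l$-many open balls, and $\Sii{0}{\xi}(X)$ is closed under $\l$-unions, so it suffices to verify that $f^{-1}(B_{d_Y}(y_0,R)) \in \Sii{0}{\xi}(X)$ for every $y_0 \in Y$ and every $R \in \GG^+$.

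The heart of the argument is the identity
\[
f^{-1}(B_{d_Y}(y_0,R)) = \bigcup_{\a < \mu} f_{d^*_\a}^{-1}\bigl( B_{d_Y}(y_0, R-r_\a) \bigr),
\]
with the convention that the ball on the right is empty whenever $R - r_\a \leq 0$. The inclusion $\supseteq$ is immediate from the triangle inequality: if $d_Y(f_{d^*_\a}(x), y_0) < R - r_\a$, then $d_Y(f(x), y_0) \leq d_Y(f(x),f_{d^*_\a}(x)) + d_Y(f_{d^*_\a}(x),y_0) < r_\a + (R - r_\a) = R$. For the reverse inclusion $\subseteq$, given $f(x) \in B_{d_Y}(y_0,R)$ with $R_0 := d_Y(f(x),y_0) < R$, I would use the coinitiality of $(r_\a)_{\a < \mu}$ in $\GG^+$ and the fact that $\GG$ is a field to pick $\a < \mu$ with $r_\a < (R - R_0)/2$; then $d_Y(f_{d^*_\a}(x), y_0) \leq r_\a + R_0 < R - r_\a$, so $f_{d^*_\a}(x) \in B_{d_Y}(y_0, R - r_\a)$.

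Once the identity is established, each $f_{d^*_\a}^{-1}(B_{d_Y}(y_0, R - r_\a))$ lies in $\Sii{0}{\xi}(X)$ by the assumed $\Sii{0}{\xi}$-measurability of $f_{d^*_\a}$, and the $\mu$-sized union remains in $\Sii{0}{\xi}(X)$ because $\mu \leq \l$ and $\Sii{0}{\xi}(X)$ is closed under $\l$-unions. The subtlety I want to flag is why the \emph{single function} formulation above is preferable to the naive expression $\bigcup_{\a} \bigcup_{d} \bigcap_{d' \geq d} f_{d'}^{-1}(B_{d_Y}(y_0, R - r_\a))$ that one would write down by analogy with the sequential setting: the latter would require closure of $\Sii{0}{\xi}$ under intersections of size up to $|D| \leq \mu$, which fails in general. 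Replacing the tail intersection by a single function $f_{d^*_\a}$ applied to the ball shrunk by $r_\a$ completely sidesteps that difficulty, and is really the whole trick of the proof---the $r_\a$-uniform closeness of $f_{d^*_\a}$ to $f$ is by itself enough to witness membership in the original ball.
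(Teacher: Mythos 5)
Your proposal is correct and follows essentially the same route as the paper's proof: both replace the tail intersection over $D$ by a single function $f_{d^*_\a}$ evaluated against a ball shrunk by $r_\a$, establish the identity $f^{-1}(B_{d_Y}(y_0,R)) = \bigcup_{\a<\mu} f_{d^*_\a}^{-1}(B_{d_Y}(y_0,R-r_\a))$ via the triangle inequality, and conclude by closure of $\Sii{0}{\xi}(X)$ under $\l$-unions. The only cosmetic difference is that you divide $R-R_0$ by $2$ in the field $\GG$, whereas the paper re-indexes the radii as $\varepsilon_\a = \varepsilon - r_{\b+\a}$ and uses the built-in inequality $r_{\a+1}+r_{\a+1}\leq r_\a$; both devices serve the same purpose.
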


\begin{proof}
Suppose that \( f = \ulim_{d \in D } f_d \) with \( f_d \in \M_\xi(X,Y) \) for every \( d \in D \).
We need to show that given any \( y \in Y \) and \( \varepsilon \in \GG^+ \), the preimage of the open ball \( B_{d_Y}(y,\varepsilon)\) belongs to \( \Sii{0}{\xi}(X) \). Let \( \b < \mu \) be such that \( r_\b < \varepsilon \), and for each \( \a < \mu \) set \( \varepsilon_\a = \varepsilon - r_{\b+\a} \). Then \( 0 < \varepsilon_\a < \varepsilon \), and for every \( \varepsilon' < \varepsilon \) there is \( \a < \mu \) such that \( \varepsilon' \leq \varepsilon_\a \).

By choice of \( (f_d)_{d \in D} \), for every \( \a < \mu \) there is \( d_\a \in D \) such that \( d_Y(f_d(x),f(x)) < r_{\b+\a} \) for every \( x \in X \) and \( d \geq d_\a \). We claim that
\[  
f^{-1}(B_{d_Y}(y,\varepsilon)) = \bigcup_{\a < \mu}  
f^{-1}_{d_\a}(B_{d_Y}(y,\varepsilon_\a)),
\]
from which the result clearly follows.

For the inclusion from right to left, fix \( \a< \mu \) and a point \( x \in f^{-1}_{d_\a}(B_{d_Y}(y,\varepsilon_\a)) \). By the triangle inequality and the choice of \( d_\a \),
\[  
d_Y(y,f(x)) \leq d_Y(y,f_{d_\a}(x)) + d_Y(f_{d_\a}(x),f(x)) < \varepsilon_\a + r_{\b+\a} = \varepsilon,
\]
so that \( x \in f^{-1}(B_{d_Y}(y,\varepsilon)) \). 

Conversely, given \( x \in f^{-1}(B_{d_Y}(y,\varepsilon)) \) let \( \varepsilon' = d_Y(y,f(x)) < \varepsilon \). Let \( \a < \mu \) be such that \( r_{\b+\a} < \varepsilon - \varepsilon'\), so that \(  \varepsilon' + r_{\b+\a+1} + r_{\b+\a+1} \leq \varepsilon' + r_{\b+\a} \leq \varepsilon \) and hence \( \varepsilon' + r_{\b+\a+1} \leq \varepsilon - r_{\b+\a+1} = \varepsilon_{a+1} \). Then by the triangular inequality and the choice of \( d_\a\)
\[
d_Y(y,f_{d_{\a+1}}(x)) \leq d_Y(y,f(x)) + d_Y(f(x),f_{d_{\a+1}}(x)) < \varepsilon' + r_{\b+\a+1} \leq \varepsilon_{\a+1}.
\]
Therefore \( x \in f^{-1}_{d_{\a+1}}(B_{d_Y}(y,\varepsilon_{\a+1}))  \) and we are done.
\end{proof}

Let \( X \) and \( Y \) be topological spaces, and let \( 1 \leq \xi < \l^+ \). A \markdef{\(\Dee{0}{\xi}\)-function} is a map \( f \colon X \to Y \) such that \(f^{-1}(A) \in \Sii{0}{\xi}(X)\), for every \(A \in \Sii{0}{\xi}(Y)\). The set of \( \Dee{0}{\xi} \)-functions is denoted by \( \boldsymbol\Delta_{\xi}(X, Y) \).
It is easy to check that \( \Dee{0}{\xi} \)-functions provide an alternative stratification of \( \l^+ \)-Borel measurable functions: indeed, \( \boldsymbol{\Delta}_\xi(X,Y) \subseteq \M_\xi(X,Y) \subseteq \boldsymbol{\Delta}_{\xi \cdot \omega}(X,Y) \) for every \( 1 \leq \xi < \l^+ \).
Moreover, under mild assumptions on the spaces \( X \) and \( Y \) the set \( \boldsymbol\Delta_{\xi}(X, Y) \) is a proper subclass of  \(\M_{\xi}(X,Y)\).
Notice also that if a function is locally in \( \boldsymbol\Delta_{\xi}(X, Y) \) on a \( \Sii{0}{\xi} \)-partition of \( X \) of size at most \( \l \), then it is a \( \Dee{0}{\xi} \)-function itself. 
We now have all the tools to mimic the proof of~\cite[Theorem 3.5]{MR09}  and get the following characterization of \( \M_{\xi+1}(X,Y) \) in terms of uniform \( \l \)-limits of simpler functions.

\begin{theorem}\label{thrm:delta_funnew} 
Let $X \in \metr$, \( (Y,d_Y) \in \metrhat \), and $1 \leq \xi<\l^+$. For every function $f \colon X \to Y$, the following are equivalent:
\begin{enumerate-(1)}
\item \label{delta_funnew1} 
$f \in \M_{\xi}(X,Y) \);
\item \label{delta_funnew2} 
\( f = \ulim_{\a < \l} f_\a\), where each \( f_\a \) is locally constant on a \( \Sii{0}{\xi} \)-partition of \( X \) of size at most \( \l \);
\item \label{delta_funnew3} 
\( f = \ulim_{\a < \l} f_\a \) with \( f_\a \in \boldsymbol\Delta_{\xi}(X,Y) \) for all \( \a < \l \).
\end{enumerate-(1)}
Moreover, in part~\ref{delta_funnew2} we can replace locally constant functions with locally Lipschitz or even just locally continuous maps.
\end{theorem}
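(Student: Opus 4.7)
The plan is to close the cycle (1)$\Rightarrow$(2)$\Rightarrow$(3)$\Rightarrow$(1). The easy direction (3)$\Rightarrow$(1) is immediate from Lemma~\ref{lem:closure_uniformnew} applied with $D = \l$, using that $\boldsymbol\Delta_\xi(X,Y) \subseteq \M_\xi(X,Y)$. For (2)$\Rightarrow$(3) I invoke the observation stated just before the theorem: a function that is locally in $\boldsymbol\Delta_\xi(X,Y)$ on a $\Sii{0}{\xi}$-partition of $X$ of size at most $\l$ is itself a $\Dee{0}{\xi}$-function, and constant functions are trivially $\Dee{0}{\xi}$-functions.

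The core of the argument is (1)$\Rightarrow$(2). I fix a dense subset $\{y_i\}_{i<\l}$ of $Y$, which exists because $Y$ has weight at most $\l$. For each $\a<\mu$, the balls $B_{d_Y}(y_i, r_\a)$ for $i<\l$ cover $Y$ by density, so pulling back through $f$ yields a $\Sii{0}{\xi}$-covering $\{U_i^\a\}_{i<\l}$ of $X$ with $U_i^\a = f^{-1}(B_{d_Y}(y_i, r_\a))$. Applying the $\l$-reduction property for $\Sii{0}{\xi}$ provided by Theorem~\ref{22.16}, I refine this covering to a family of pairwise disjoint sets $\{V_i^\a\}_{i<\l} \subseteq \Sii{0}{\xi}(X)$ with $V_i^\a \subseteq U_i^\a$ and $\bigcup_{i<\l} V_i^\a = X$; discarding empty pieces produces a genuine $\Sii{0}{\xi}$-partition of size at most $\l$. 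I then define $f_\a \colon X \to Y$ to take the value $y_i$ on $V_i^\a$. By construction $f_\a$ is locally constant on a $\Sii{0}{\xi}$-partition of size at most $\l$, and whenever $x \in V_i^\a$ one has $d_Y(f_\a(x), f(x)) < r_\a$. Since the bound is uniform in $x$ and $(r_\a)_{\a<\mu}$ is coinitial in $\GG^+$, this gives $f = \ulim_{\a<\mu} f_\a$, which is repackaged as an $\l$-indexed uniform limit by trivial padding.

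The ``moreover'' clause requires no extra work. Locally constant trivially implies locally Lipschitz, which in turn implies locally continuous, so the same witnesses $f_\a$ prove the stronger variants of (2). In the opposite direction, continuous maps are $\Dee{0}{\xi}$-functions, so being locally continuous on a $\Sii{0}{\xi}$-partition of size at most $\l$ amounts to being locally in $\boldsymbol\Delta_\xi(X,Y)$ on that partition, and the observation used for (2)$\Rightarrow$(3) still applies.

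The main obstacle is invoking Theorem~\ref{22.16} at the base level $\xi = 1$: the $\l$-reduction property for $\Sii{0}{1}$ needs either $\mu > \o$, in which case $X$ is automatically zero-dimensional by Theorem~\ref{theorem:sikorski}, or an explicit zero-dimensionality hypothesis on $X$ when $\mu = \o$. For $\xi \geq 2$ no such caveat arises, and in every regime the rest of the argument is purely formal.
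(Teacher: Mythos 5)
Your proof is correct and follows essentially the same route as the paper's: the identical dense-set-plus-\(\l\)-reduction construction for \ref{delta_funnew1}\(\Rightarrow\)\ref{delta_funnew2}, the same appeal to the observation that functions locally in \( \boldsymbol\Delta_{\xi}(X,Y) \) on a \( \Sii{0}{\xi} \)-partition of size at most \( \l \) are \( \Dee{0}{\xi} \)-functions for \ref{delta_funnew2}\(\Rightarrow\)\ref{delta_funnew3}, and Lemma~\ref{lem:closure_uniformnew} for \ref{delta_funnew3}\(\Rightarrow\)\ref{delta_funnew1}. Your closing caveat about the base level \( \xi = 1 \) when \( \mu = \omega \) and \( \dim(X) \neq 0 \) is a legitimate observation (a connected \( X \) admits no nontrivial \( \Sii{0}{1} \)-partition, so \ref{delta_funnew2} degenerates) that the paper's statement and proof pass over in silence.
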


\begin{proof} 
The implication \ref{delta_funnew2} \( \Rightarrow \) \ref{delta_funnew3} follows from the fact that every constant (or Lipshitz, or continuous) function belongs to \( \boldsymbol\Delta_{\xi}(X,Y) \). Moreover, if \ref{delta_funnew3} holds then \( f_\a \in \M_\xi(X,Y) \) for all \( \a < \l \), and thus \( f \in \M_{\xi}(X,Y) \) by Lemma~\ref{lem:closure_uniformnew}, so that \ref{delta_funnew1} is satisfied. It remains to prove \ref{delta_funnew1} \( \Rightarrow \) \ref{delta_funnew2}. 

Let \( \{ y_i \mid i < \l \} \) be a dense subset of \( Y \), and for each \( \a < \mu \) and \( i < \l \) let \( U^i_\a = B_{d_Y}(y_i,r_\a) \). Fix \( \a < \mu \). The family \( \{ U^i_\a \mid i < \l \} \) is an open covering of \( Y \).
Since \( f \in \M_{\xi}(X,Y) \), each set \( A^i_\a = f^{-1}(U^i_\a) \) belongs to \( \Sii{0}{\xi}(X) \). 
By the \( \l \)-reduction property (Theorem~\ref{22.16}), we can find a disjoint \( \Sii{0}{\xi} \)-covering \( \B_\a = \{ B^i_\a \mid i < \l \} \) of \( X \) such that \( B^i_\a \subseteq A^i_\a \) for every \( i < \l \). Thus the nonempty elements of \( \B_\a \) form a \( \Sii{0}{\xi} \)-partition of \( X \). Let \( f_\a \) be defined on an arbitrary \( x \in X \) by letting \( f(x) = y_i\), where \( i < \l \) is such that \( x \in B^i_\a \).
Then \( f_\a \) is locally constant on the above mentioned \( \Sii{0}{\xi} \)-partition, and \( f = \ulim_{\a < \l} f_\a \) by the choice of the open sets \( U^i_\a \).
\end{proof}

\begin{corollary}
Let $X \in \metr$, \( (Y,d_Y) \in \metrhat \), and $1 \leq \xi<\l^+$. Then
\[  
\M_{\xi}(X,Y) = \ullim{\D_\l} \boldsymbol\Delta_{\xi}(X,Y).
\]
\end{corollary}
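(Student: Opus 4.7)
The proof should be essentially a direct corollary of the two preceding results, so my plan is short and I expect no real obstacle.

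For the inclusion $\ullim{\D_\l} \boldsymbol\Delta_{\xi}(X,Y) \subseteq \M_{\xi}(X,Y)$, I would simply invoke Lemma~\ref{lem:closure_uniformnew}. Given any $D \in \D_\l$ and any family $(f_d)_{d \in D}$ of $\Dee{0}{\xi}$-functions with $f = \ulim_{d \in D} f_d$, the inclusion $\boldsymbol\Delta_\xi(X,Y) \subseteq \M_\xi(X,Y)$ recorded just before Theorem~\ref{thrm:delta_funnew} lets us view each $f_d$ as being in $\M_\xi(X,Y)$, and then Lemma~\ref{lem:closure_uniformnew} closes that class under uniform $D$-limits to give $f \in \M_\xi(X,Y)$.

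For the reverse inclusion $\M_{\xi}(X,Y) \subseteq \ullim{\D_\l} \boldsymbol\Delta_{\xi}(X,Y)$, I would pick any $f \in \M_\xi(X,Y)$ and apply the implication \ref{delta_funnew1} $\Rightarrow$ \ref{delta_funnew3} of Theorem~\ref{thrm:delta_funnew}. This directly yields a sequence $(f_\a)_{\a < \l}$ of functions in $\boldsymbol\Delta_\xi(X,Y)$ such that $f = \ulim_{\a < \l} f_\a$. Since the ordinal $\l$, viewed as a linear order, is a directed set of cardinality $\l$ and hence lies in $\D_\l$, this exhibits $f$ as a uniform $D$-limit of $\Dee{0}{\xi}$-functions for $D = \l \in \D_\l$, so $f \in \ullim{\D_\l} \boldsymbol\Delta_\xi(X,Y)$.

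Combining the two inclusions gives the desired equality. There is no genuinely hard step here: all the work has been done in Lemma~\ref{lem:closure_uniformnew} and Theorem~\ref{thrm:delta_funnew}, and the corollary only packages them together.
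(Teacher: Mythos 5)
Your argument is correct and coincides with the paper's own proof: one inclusion via the implication \ref{delta_funnew1} \( \Rightarrow \) \ref{delta_funnew3} of Theorem~\ref{thrm:delta_funnew} (noting \( \l \in \D_\l \)), the other via \( \boldsymbol{\Delta}_{\xi}(X,Y) \subseteq \M_{\xi}(X,Y) \) and Lemma~\ref{lem:closure_uniformnew}. Nothing to add.
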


\begin{proof}
The left-to-right inclusion follows from Theorem~\ref{thrm:delta_funnew}, while the reverse inclusion holds by \( \boldsymbol{\Delta}_{\xi}(X,Y) \subseteq \M_{\xi}(X,Y) \) and Lemma~\ref{lem:closure_uniformnew}.
\end{proof}

By virtue of Theorems~\ref{baire_class_xi_teo} and~\ref{baire_class_xi_teo_sing}, all the results of this section also hold for the collection \( \B_\xi(X,Y)\) of \( \l \)-Baire class \( \xi \) functions, for every \( \xi < \l^+ \) (under the appropriate hypotheses on \( X \) and \( Y \)).
In particular, while \( \B_\xi(X,Y) \) is far from being closed under \( \D_\l \)-limits, which indeed generate the next class \( \B_{\xi+1}(X,Y) \), it is instead closed under \emph{uniform} \( \D_\l \)-limits.

\section{Final remarks and open questions} \label{sec:questions}

Theorem~\ref{teo:D_limits}, Theorem~\ref{baire_class_xi_teo}, and Theorem~\ref{baire_class_xi_teo_sing} provide a quite satisfactory answer to our initial problem. However, as discussed at the end of Section~\ref{sec:Borelfunctions}, it would be interesting to understand if it is possible to further reduce the number of limits needed to perform this kind of analysis. In particular, in the regular case it is natural to wonder whether sequential limits (of any length) suffice.
An attempt in this direction was made in~\cite{phdNob}, where \( \l \)-Baire class \( \xi \) functions are defined as the collection of all \( \k \)-limits, for \( \k \leq \l \), of functions with \( \l \)-Baire class smaller than \( \xi \). 
The setup is that of a regular cardinal \( \lambda \) and of functions between some fixed subset \( X \) of \( \pre{\l}{\l}\) and the whole generalized Baire space \( Y = \pre{\l}{\l}\), thus this definition is precisely what we are considering in the present discussion. 
After providing a game characterization of the classes \( \M_{\xi+1}(X,Y) \), in~\cite[Theorem 4.12]{phdNob} it is claimed that all \( \Sii{0}{\xi+1} \)-measurable functions are of \( \l \)-Baire class \( \xi \) according to the previous definition. 
The proof is by induction on \( \xi < \l^+ \), but as observed it has to fail at the first limit level with small cofinality, i.e.\ at stage \( \xi = \o\): since the functions of finite \( \l \)-Baire class are those in \( \M_{< \o}(X,Y) \), any \( \k \)-limit of such functions must be in \( \M_{< \o}(X,Y) \) itself if \( \k = \l \) (Proposition~\ref{prop:long_limits_counterex}), or it has to be \( \Dee{0}{\xi+1} \)-measurable if \( \k < \l \) (because \( \Dee{0}{\xi+1} \) is a \( \l \)-algebra). 
In other words: when confining ourselves to sequential limits, in order to capture \( \Sii{0}{\xi+1} \)-measurable functions for \( \xi \) limit with \( \cof(\xi)< \l \) it is necessary to use (at least) double limits. 
Despite the failure of the na\"ive approach undertaken in~\cite{phdNob}, using nested sequential limits it might still be possible to answer affirmatively the following questions, which to the best of our knowledge are still open.

\begin{question}
Let \( \l \) be regular, and \( X,Y \in \metr \). Does the collection of all \( \l^+\)-Borel measurable functions from \( X \) to \( Y \) coincide with the closure of continuous functions under \( \k \)-limits, where \( \k \) varies among regular cardinals up to \( \l \)? If \( \xi < \l^+ \) is limit with \( \cof(\xi) < \l \), is it true that \( \M_{\xi+1}(X,Y) =  \llim{\l} \left( \llimhatdef{\cof(\xi)}{\xi} \M_{<\xi}(X,Y) \right) \)? Can we at least have \( \M_{\xi+1}(X,Y) \subseteq  \llim{\l} \left( \llim{\cof(\xi)} \M_{<\xi}(X,Y) \right) \)?
\end{question}

A positive answer to these questions would provide optimal results. 
As a partial contribution in this direction, we notice the following.

\begin{proposition} \label{prop:partialresult}
Let \( X,Y \in \metr \), and let \( \xi < \l^+ \) be a limit ordinal with \( \cof(\xi) < \l \).
\begin{enumerate-(1)}
\item \label{prop:partialresult-1}
If \( f \colon X \to Y \) is locally constant on a \(\Sii{0}{\xi} \)-partition of \( X \) of size at most \( \l \), then \( f \in \llimhatdef{\cof(\xi)}{\xi} \M_{<\xi}(X,Y) \).
\item \label{prop:partialresult-2}
If \( f \colon X \to Y \) is \( \Sii{0}{\xi} \)-measurable, then \( f \in \ullim{\l} \left( \llimhatdef{\cof(\xi)}{\xi} \M_{<\xi}(X,Y) \right)\).
\end{enumerate-(1)}
\end{proposition}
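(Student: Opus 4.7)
The plan is to reduce part~\ref{prop:partialresult-2} to part~\ref{prop:partialresult-1} via Theorem~\ref{thrm:delta_funnew}. Indeed, since $f \in \M_\xi(X,Y)$ and $Y \in \metr$ admits a compatible $\GG$-metric, making it a member of $\metrhat$, Theorem~\ref{thrm:delta_funnew}\ref{delta_funnew1} $\Rightarrow$ \ref{delta_funnew2} will give a sequence $(g_\a)_{\a < \l}$ with $f = \ulim_{\a < \l} g_\a$, where each $g_\a$ is locally constant on a $\Sii{0}{\xi}$-partition of $X$ of size at most $\l$. If part~\ref{prop:partialresult-1} holds, then each $g_\a \in \llimhatdef{\cof(\xi)}{\xi} \M_{<\xi}(X,Y)$, and this yields part~\ref{prop:partialresult-2} directly from the definition of $\ullim{\l}$.

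For part~\ref{prop:partialresult-1}, I will fix a strictly increasing sequence $(\xi_\a)_{\a < \cof(\xi)}$ cofinal in $\xi$, together with the $\Sii{0}{\xi}$-partition $(A_i)_{i < \l}$ and the values $(y_i)_{i < \l}$ witnessing that $f$ is locally constant. Using Lemma~\ref{lem:reg}\ref{lem:reg2}, I can decompose each $A_i$ as an increasing union $A_i = \bigcup_{\a < \cof(\xi)} A^i_\a$ with $A^i_\a \in \Dee{0}{\xi_\a+2}(X)$ and $A^i_\a \subseteq A^i_{\a'}$ for $\a \leq \a'$. Choosing any $\bar y \in Y$, the approximants will be the functions $f_\a \colon X \to Y$ taking value $y_i$ on each $A^i_\a$ and value $\bar y$ on the complement $X \setminus \bigsqcup_i A^i_\a$. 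The natural witness of $\widehat{\lim}^\xi$-convergence will be the covering $X_\a := \bigsqcup_i A^i_\a$, which is a disjoint union of $\l$-many $\Dee{0}{\xi_\a+2}$-sets and therefore lies in $\Sii{0}{\xi_\a+2}(X) \subseteq \Sii{0}{\xi}(X)$. The family $(X_\a)_{\a < \cof(\xi)}$ is monotone and covers $X$; once $x \in X_\a$ falls into the unique $A^i_\a$ containing it, monotonicity gives $x \in A^i_{\a'}$ for every $\a' \geq \a$, forcing $f_{\a'}(x) = y_i = f(x)$.

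The only delicate point is justifying that each $f_\a$ belongs to $\M_{<\xi}(X,Y)$. Because the partition has size $\l$, I cannot expect $f_\a$ to be locally constant on a $\Dee{0}{\xi_\a+2}$-partition; the ``exceptional'' piece $X \setminus X_\a$ is only $\Pii{0}{\xi_\a+2}$. A short case analysis on whether $\bar y$ belongs to a given open $U \subseteq Y$ will nonetheless show that $f_\a^{-1}(U)$ is either a $\Sii{0}{\xi_\a+2}$-union of pieces (when $\bar y \notin U$) or such a union together with the $\Pii{0}{\xi_\a+2}$ set $X \setminus X_\a$ (when $\bar y \in U$); in either situation $f_\a^{-1}(U) \in \Sii{0}{\xi_\a+3}(X)$. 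Since $\xi$ is a limit ordinal we have $\xi_\a + 3 < \xi$, so $f_\a \in \M_{\xi_\a+3}(X,Y) \subseteq \M_{<\xi}(X,Y)$. This small loss from $\xi_\a$ to $\xi_\a+3$ is the main (and really only) obstacle in the argument, and it is absorbed harmlessly by the cofinal sequence; the remaining verifications are routine.
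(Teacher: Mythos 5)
Your argument is correct and follows essentially the same route as the paper: part~\ref{prop:partialresult-2} is reduced to part~\ref{prop:partialresult-1} via Theorem~\ref{thrm:delta_funnew}, and part~\ref{prop:partialresult-1} is proved by decomposing each piece of the partition into an increasing union of \( \Dee{0}{\xi_\a+2} \)-sets via Lemma~\ref{lem:reg}\ref{lem:reg2}, defining the approximants to agree with \( f \) on \( X_\a = \bigcup_i A^i_\a \) and to take a fixed value elsewhere, and using \( (X_\a)_{\a<\cof(\xi)} \) as the \( \Sii{0}{\xi} \)-witness of the \( \widehat{\lim}{}^\xi \)-convergence. Your extra care about the exceptional piece \( X \setminus X_\a \) being only \( \Pii{0}{\xi_\a+2} \), costing a bump to \( \Sii{0}{\xi_\a+3} \)-measurability, is a harmless (and slightly more explicit) version of the paper's assertion that the approximants are \( \Sii{0}{\xi_\a+1} \)-measurable.
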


\begin{proof}
\ref{prop:partialresult-1} 
The proof is similar to that of Claim~\ref{claim:locallyconstant}. Let \( (A_j)_{j < J} \), for some \( J \leq \l \), be a \( \Sii{0}{\xi} \)-partition of \( X \) such that \( f \) is locally constant on it. By Lemma~\ref{lem:reg} (if \( \mu > \o \)) and~\cite[Proposition 4.2.1]{DMR} (if \( \mu = \o \)), there are a strictly increasing sequence \( (\xi_\a)_{\a < \cof(\xi)} \) cofinal in \( \xi \) and sets \( B^j_\a \in \Dee{0}{\xi_\a}(X) \) such that \( \a \leq \a' \Rightarrow  B^j_{\a} \subseteq B^j_{\a'} \) and \( A_j = \bigcup_{\a < \cof(\xi)} B^j_\a \), for every \( j < J \). Fix any \( \bar y \in Y  \) and define \( g_\a \colon X \to Y \) by setting
\[  
g_\a(x) = 
\begin{cases}
f(x) & \text{if } x \in \bigcup_{j<J} B^j_\a \\
\bar y & \text{otherwise}.
\end{cases}
\]
Then \( g_\a \) is \( \Sii{0}{\xi_\a+1}\)-measurable, and \( f = \widehat{\lim}^\xi_{\a < \cof(\xi)} g_a \), as witnessed by the increasing covering \( (X_\a)_{\a < \cof(\xi)} \) of \( X \) given by \( X_\a = \bigcup_{j < J} B^j_\a \).

\ref{prop:partialresult-2}
Use part~\ref{prop:partialresult-1} and Theorem~\ref{thrm:delta_funnew}.
\end{proof}

We also observe that by Theorem~\ref{thrm:delta_funnew}, to deal with the case of arbitrary functions in \( \M_{\xi+1}(X,Y) \) for \( \xi < \l ^+ \) limit with \( \cof(\xi) < \l \), it is enough to solve the problem for functions which are locally constant on a \( \Sii{0}{\xi+1} \)-partition of size at most \( \l \). Arguing as in Section~\ref{sec:long} (and using in particular Claim~\ref{claim:locallyconstant}), such functions can be obtained through a limit operator of the form \( \lim_\l \circ \lim_{\cof(\xi)}\) if the \( \Sii{0}{\xi+1} \)-partition is finite or, more generally, of size smaller than \( \cof(\xi) \). Unfortunately, although the remaining gap might seem small, we cannot yet close it.

In the case of a singular cardinal \( \l \), instead, it seems hard to conjecture that sequential limits can suffice, as only short limits are available because \( \l \)-limits are equivalent to \( \mu \)-limits, where as usual \( \mu = \cof(\l) < \l \). However, it makes sense to ask whether \( \Fin_\l \)-limits are enough to describe the whole structure of \( \l^+ \)-Borel measurable functions and its stratification given by \( \l \)-Baire class \( \xi \) functions. The question makes sense also in the regular case.

\begin{question}
Let \( X,Y \in \metr \).
Does the closure under \( \Fin_\l \)-limits of \( \M_1(X,Y) \) coincide with the collection of all \( \l^+ \)-Borel measurable functions? 
In particular, is it true that \( \M_{\xi+1}(X,Y) = \llim{\Fin_\l} \M_{< \xi}(X,Y) \) when \( \xi < \l^+ \) is a limit ordinal?
\end{question}

Finally, recall that in Theorem~\ref{teo:D_limitsunique} we proved that there is a single directed set in \( \D_\l \), namely \( \widehat{\Fin}_\l \), which can generate the whole class of \( \l^+ \)-Borel measurable functions, globally. This naturally raises the question of whether it can also give a level-by-level result. As usual, the answer is positive for successor levels and, if \( \l \) is regular, also for limit levels of cofinality \( \l \). However, the remaining cases are still unclear.

\begin{question}
Let \( X,Y \in \metr \), and let \( \xi < \l^+ \) be a limit ordinal.
Is it true that \( \M_{\xi+1}(X,Y) = \llim{\widehat{\Fin}_\l} \M_{< \xi}(X,Y) \)?
\end{question}

\end{document}